\begin{document}
\allowdisplaybreaks[1]
\makeatletter \@addtoreset{equation}{section}
\renewcommand{\thesection}{\arabic{section}}
\renewcommand{\theequation}{\thesection.\arabic{equation}}

\newcommand{\bdd}{\hspace*{-0.08in}{\bf.}\hspace*{0.05in}}
\def\para#1{\vskip 0.4\baselineskip\noindent{\bf #1}}
\def\qed{$\qquad \Box$}

\def\ss{{\mathbb S}}
\def\zz{{\mathbb Z}}

\def\chao#1 {\fbox {\footnote {\ }}\ \footnotetext {From Chao:  #1}}
\def\fubao#1 {\fbox {\footnote {\ }}\ \footnotetext {From Fubao: #1}}

\newcommand{\bed}{\begin{displaymath}}
\newcommand{\eed}{\end{displaymath}}
\newcommand{\bea}{\bed\begin{array}{rl}}
\newcommand{\eea}{\end{array}\eed}
 \newcommand{\beq}[1]{\begin{equation} \label{#1}}
\newcommand{\eeq}{\end{equation}}
\newcommand{\disp}{\displaystyle}
\newcommand{\ad}{&\!\!\!\disp}
\newcommand{\aad}{&\disp}
\newcommand{\barray}{\begin{array}{ll}}
\newcommand{\earray}{\end{array}}

\newcommand{\La}{\Lambda}
\newcommand{\la}{\lambda}
\newcommand{\e}{\varepsilon}
\newcommand{\sg}{\sigma}
\newcommand{\lf}{\lfloor}
\newcommand{\rf}{\rfloor}
\newcommand{\wdt}{\widetilde}
\newcommand{\wdh}{\widehat}
\newcommand{\bQ}{{\mathbb Q}}
\newcommand{\al}{\alpha}
\newcommand{\wt}{\widetilde}
\newcommand{\wh }{\widehat}
\newcommand{\R}{\mathbb R}
\newcommand{\E}{{\mathbb E}}
\newcommand{\m}{\mathfrak m}
\renewcommand{\P}{\mathbb P}
\newcommand{\LL}{{\mathcal L}}
\newcommand{\A}{{\mathcal A}}
\newcommand{\B}{{\mathcal B}}
\newcommand{\F}{{\mathcal F}}
\def\G{\mathcal G}
\newcommand{\one}{\mathbf{1}}
\newcommand{\zero}{\mathbf{0}}
\newcommand{\N}{{\cal N}}
\newcommand{\set}[1]{\left\{#1\right\}}
\def\d{\mathrm d}
\newcommand{\tr}{\mathrm {tr}}
\newcommand{\lan}{\langle}
\newcommand{\ran}{\rangle}
\newcommand{\abs}[1]{\left|#1 \right|}
\newcommand{\norm}[1]{\left\|#1 \right\|}
\newtheorem{Theorem}{Theorem}[section]
%[section] means numbering within sections
\newtheorem{Corollary}[Theorem]{Corollary}
%[Theorem] means sharing numbering system with theorem-environment

\newtheorem{Lemma}[Theorem]{Lemma}
\newtheorem{Note}[Theorem]{Note}
\newtheorem{Proposition}[Theorem]{Proposition}

\theoremstyle{definition}
\newtheorem{Definition}[Theorem]{Definition}
\newtheorem{Remark}[Theorem]{Remark}
\newtheorem{Example}[Theorem]{Example}
\newtheorem{Counterexample}[Theorem]{Counterexample}
\newtheorem{Assumption}[Theorem]{Assumption}

\title{\Large On Feller and  Strong Feller Properties
and Exponential Ergodicity of Regime-Switching Jump Diffusion Processes
with Countable  Regimes\footnote{This research was supported in
 part by the National Natural Science Foundation of China
 under Grant No.11671034.}}
  %{Regime-Switching Jump-Diffusion Processes whose Discrete Components Have Infinitely Countable States}
 % Alternate title: Regime-Switching Jump-Diffusion Processes with Countably Infinite Discrete State Spaces
\author{{Fubao Xi}\thanks{School of Mathematics and Statistics,
Beijing Institute of Technology, Beijing 100081, China, xifb@bit.edu.cn.} \and {Chao Zhu}\thanks{Department of Mathematical Sciences, University of Wisconsin-Milwaukee, Milwaukee, WI
53201, zhu@uwm.edu.}}

\maketitle

\begin{abstract}
% This work focuses on a class of switching jump-diffusion processes.
% First, compared with the most existing results in the literature, in
% our model, the discrete components are allowed to have infinitely
% countable states (or regimes). The existence and uniqueness of the
% underlying process are obtained by representing the switching
% component as a stochastic integral with respect to a Poisson random
% measure and by using a successive approximation method. Then, some
% dentities of transition probabilities for diffusions, jump
% diffusions, and a special type of switching jump diffusion are
% established. Using these identities, the strong Feller property is
% then proved for a special type switching jump diffusion. Finally,
% the strong Feller property is obtained for general cases by using
% the result for the special switching jump diffusion with the
% Radon-Nikodym derivative.
This work focuses on a class of regime-switching jump diffusion processes, in which the switching component has   countably infinite  many  states or regimes.  The existence and uniqueness of the underlying process are obtained by %exponential killing and
an interlacing procedure. Then   the Feller and strong Feller properties of such processes are derived by  the coupling method and an appropriate Radon-Nikodym derivative.  Finally the paper studies exponential ergodicity of regime-switching jump-diffusion processes.

\bigskip

\noindent{\bf Key Words and Phrases.} Jump-diffusion, switching, existence, uniqueness, Feller property,  strong Feller property,  exponential ergodicity.

\bigskip

\noindent{\bf Running Title.} Regime-switching jump diffusion processes

\bigskip

\noindent{\bf 2000 MR Subject Classification.} 60J25, 60J27, 60J60,
60J75.
\end{abstract}

% \newpage

 \baselineskip 18pt
\section{Introduction}\label{sect:introduction}

Jump processes have   become a key model in stochastic analysis  over the
recent years. On one hand this is due to an increasing need for
modeling stochastic processes with jumps in areas ranging from
physics and biology to finance and economics. On the other hand,
there is a more and more profound understanding of  theories and
properties of jump processes. While a general framework is certainly
provided by semimartingale theory, L\'{e}vy processes remain the
basic building blocks. We refer the reader to \cite{APPLEBAUM} for extensive treatments of L\'evy processes.
Meanwhile,
thanks to their ability in incorporating   structural changes,  regime-switching      processes    have attracted many interests lately. See, for example,
 \cite{Xi-08-Feller,MaoY,YZ-10, XiYin-11,ShaoX-14,Wang-14,XiZ-06,CloezH-15,Zhu-10} and the  references therein for investigations  of such processes  and their applications in areas such as inventory control,   ecosystem modeling, manufacturing and production planning, financial engineering,    risk theory,   etc.

Motivated by the increasing need of modeling complex systems,
in which both structural changes and small fluctuations as well as
big spikes coexist and are intertwined,
this paper aims to study regime-switching jump diffusion processes.
Unlike some of the earlier work on  regime-switching jump diffusion processes
such as \cite{Xi-09,ZhuYB-15}, in which the switching component takes value
in a finite state space, this paper allows the switching component
to have an infinite countable state space. This is motivated by the formulation
in the recent work \cite{Shao-15}, in which the switching component
has an infinite countable state space. In the formulation of
\cite{Shao-15},  starting from an arbitrary state,
the switching component  can only switch to a finite neighboring  states (Assumption (A1)). Assumption (A1), together with other conditions, allows the author to derive the existence of a weak solution directly by invoking a result in \cite{Situ-05}.
This paper does not require such a condition.
Instead, certain Lyapunov type condition (condition \eqref{eq:switching-2nd-moment-condition}) is used.
  % to obtain the existence of a solution to the stochastic differential equations corresponding to the regime-switching jump diffusion processes considered in this paper; see the proof of Theorem \ref{prop-EU1} for more details.
Note that Assumption (A1) of \cite{Shao-15} certainly implies condition \eqref{eq:switching-2nd-moment-condition} but not necessarily the other way around.
As a result of this relaxation, care is needed to establish the existence of a weak solution to the associated stochastic differential equations corresponding to the regime-switching jump diffusion processes.  In this paper,  we use an interlacing procedure together with  exponential killing to construct  a (possibly local) solution to the stochastic differential equations. Condition  \eqref{eq:switching-2nd-moment-condition} together with the growth condition on the coefficients of the stochastic differential equations guarantee that the solution is actually global with no finite explosion time. Finally we establish the pathwise uniqueness result, which gives us the existence and uniqueness of a strong solution  for the associated stochastic differential equations  by virtue of Yamada and Watanabe's result on weak and strong solutions.

Next we use the coupling method to derive  the Feller property for regime-switching jump diffusion processes. The coupling method has been extensively used to study diffusion and jump diffusion processes; see, for example, \cite{ChenLi-89,LindR-86,PriolaW-06,Wang-10} and the references therein. Some earlier work of using the coupling method in the studies of regime-switching (jump) diffusions can be found in \cite{Xi-08,Xi-08-Feller,XiZ-06}. But in these papers, it is assumed that either the switching component is given by a continuous-time Markov chain, resulting the so-called Markovian regime-switching diffusion processes, or the diffusion matrix is independent of the switching component. In this paper, we construct a coupling operator $\wdt \A $ in \eqref{eq-A-coupling-operator}, which can handle the general state-dependent regime-switching jump diffusions. The key idea is that, for the coupled process $(\wdt X, \wdt \La, \wdt Z, \wdt \Xi)$ generated by $\wdt \A$ starting from $(x,k,z,k)$, one needs  to carefully treat the first time when the switching components $\wdt \La$ and $\wdt \Xi$ are different; see the proof of Theorem \ref{thm-Feller} for details.

For the investigation of strong Feller property, we use the idea developed in \cite{Xi-09}.   More precisely, we first show that under certain conditions, the jump diffusion $X^{(k)}$ of \eqref{(EU1)} has strong Feller property. Then we establish the strong Feller property for the  auxiliary process $(V, \psi)$ constructed in  equations \eqref{(GFP6)}--\eqref{(GFP7)}. Next we use the Radon-Nikodym derivative $M_{T}$ of \eqref{(GFP8)} to derive the strong Feller property for the process $(X,\La)$.  In  Section \ref{sect:exp-ergodicity},  as an application of the  strong Feller property, we also  obtain the exponential ergodicity for the regime-switching jump diffusion process $(X,\La)$.  In particular, when the coefficients of the associated stochastic differential equations are linearizable in a neighborhood of $\infty$, we   present some easily verifiable sufficient conditions for exponential ergodicity.

The rest of the paper is arranged as follows.  Section \ref{sect:formulation} presents the precise formulation for regime-switching jump diffusion processes. The standing assumptions are also collected in  Section \ref{sect:formulation}.  The existence and uniqueness results for the associated stochastic differential equations are presented in Section \ref{sect:ExisUniq}. Section \ref{sect-Feller} studies Feller property of regime-switching jump diffusion processes.  Sections \ref{sect-StFP-jump-diffusion} and \ref{sect-StFP} establish strong Feller property for jump diffusion and regime-switching jump diffusion processes, respectively. Section \ref{sect:exp-ergodicity} is devoted to exponential ergodicity of  regime-switching jump diffusion process. Finally, concluding remarks are made in Section \ref{sect:ConRemark}.

\subsection{Formulation}\label{sect:formulation}

Throughout the rest of this paper %, unless otherwise specified,
we let
$(\Omega, {\F}, \{{\F}_{t}\}_{t\ge 0}, \, \P )$ be a
complete probability space with a filtration $\{{\F}_{t}\}_{t\ge
0}$ satisfying the usual conditions (i.e., it is %  increasing and
right continuous and ${\F}_{0}$ contains all $\P $-null sets).
To formulate our model, let $d$ be a positive  integer, and put $\ss
:=\{ 0, 1, 2, \cdots \}$, the totality of nonnegative
   integers.
Let $(X,\La)$ be a right continuous, strong Markov process with
left-hand limits on $\R^d \times \ss$. The first component $X$
satisfies the following stochastic differential-integral equation
\begin{equation}\label{eq:X}
\begin{aligned} \d X(t)&  =
\displaystyle \sigma (X(t),\La(t))\d B(t)+b(X(t),\La(t))\d t\\
& \quad +\displaystyle \int_{U_{0}} c(X(t-),\La(t-),u)
\wdt{N}(\d t,\d u)   % \\  \aad\quad
+\displaystyle \int_{U\setminus U_{0}}
c(X(t-),\La(t-),u)N(\d t,\d u),
\end{aligned}
\end{equation}
where $\sg (x,k)$ is $\R^{d \times d}$-valued and $b(x,k)$ and
$c(x,k,u)$ are $\R^{d}$-valued for $x \in \R^{d}$, $k \in \ss$ and
$u \in U$, $\bigl(U, {\B}(U) \bigr)$ is a measurable space,
$B(t)$ is an $\R^{d}$-valued Brownian motion, $N(\d t,\d u)$
(corresponding to a random point process $p(t)$) is a Poisson random
measure independent of $B(t)$, $\wdt{N}( \d t,\d u)=N(\d t,\d u)-\Pi (\d u)\d t$
is the compensated Poisson random measure on $[0,\infty)\times U$,
$\Pi (\cdot)$ is a deterministic $\sg$-finite characteristic
measure on the measurable space $\bigl(U, {\B}(U) \bigr)$, and
$U_{0}$ is a set in ${\B}(U)$ such that $\Pi (U\setminus
U_{0})<\infty$. The second component $\Lambda$ is a discrete random
process with an infinite state space $\ss$ such that
\begin{equation}\label{eq:La}
\P \{\La(t+\Delta)=l | \La(t)=k, X(t)=x \} =\begin{cases}
 q_{kl}(x) \Delta +o(\Delta),
&  \, \, \hbox{if}\, \, k \ne l, \\
1+q_{kk}(x) \Delta +o(\Delta), &  \, \, \hbox{if}\, \, k = l,
\end{cases}
\end{equation}
uniformly in $\R^d$, provided $\Delta \downarrow 0$. As
usual, we assume that  for all $x \in \R^{d}$,  $q_{kl}(x) \ge 0$ for  $l \neq k$ and   $\sum_{l\in \ss}q_{kl}(x)=0$ for all
  $k\in\ss$. For $x \in \R^d$ and $\sg
=(\sg_{ij}) \in \R^{d \times d}$, define
$$|x|=\biggl (\sum_{i=1}^{d} |x_{i}|^{2} \biggr )^{1/2}, \qquad
|\sg|= \biggl ( \sum_{i,j=1}^{d} |\sg_{ij}|^{2} \biggr
)^{1/2}.$$ Define a metric $\lambda (\cdot,\cdot)$ on $\R^{d}
\times \ss$ as $\lambda \bigl((x,m), (y,n) \bigr)=|x-y|+d(m,n)$,
where $d(\cdot,\cdot)$ is the discrete metric on $\ss$ so $d(m,n) =\one_{\{ m\neq n\}} $. Let ${\B}(\R^{d} \times \ss)$ be the Borel $\sigma$-algebra on $\R^{d}
\times \ss$. Then $(\R^{d} \times \ss, \lambda (\cdot,\cdot), {\B}(\R^{d} \times \ss))$ is a locally compact and separable metric
space. For the existence and uniqueness of the strong Markov process
$(X,\La)$ satisfying the system (\ref{eq:X}) and
(\ref{eq:La}), we make the following assumptions.

\begin{Assumption}   \label{I1}
Assume
that $c(x,k,u)$ is ${\B}(\R^d \times \ss)\times {\B}(U)$
measurable, and that for some constant $H>0$,
\begin{align}
\label{linear-growth}
&|b(x,k)|^{2} + |\sigma(x,k)|^{2}+  \int_{U} |c(x,k,u)|^{2} \Pi (\d u) \le H(1+ |x|^{2}),\\
\label{Lip-condition}
& |b(x,k)-b(y,k)|^{2}+|\sg (x,k)-\sg (y,k)|^{2} +  \int_{U} |c(x,k,u)-c(y,k,u)|^{2} \Pi (\d u) \le H |x-y|^{2},
\end{align}
for all $x,y \in \R^{d}$ and   $k \in \ss$.
%   with $k \ne l$.\footnote{Equation \eqref{(1.6a)} can be written as
% \begin{displaymath}
%\bigg|\int_{\R_{+}} [h(x,k,z) m(dz) -  h(y,k,z) ]m(dz)\bigg|  \\
%   \le  H |x-y|.
%\end{displaymath} In fact, we have \begin{align*}
%\biggl|\sum_{l\in\ss}\bigl(q_{kl}(x)-q_{kl}(y)\bigr)(l-k)\bigr) \biggr| & =\bigg| \sum_{l \in \ss} q_{kl}(x) (l-k) - \sum_{l \in \ss} q_{kl}(y) (l-k)\bigg|\\
%    &  = \bigg| \int_{\R_{+}} h(x,k,z) m(dz) -  \int_{\R_{+}} h(y,k,z) m(dz)\bigg| \\
 %   &  =  \bigg|\int_{\R_{+}} [h(x,k,z) -  h(y,k,z) ]m(dz)\bigg|.\end{align*}}
 \end{Assumption}

 \begin{Assumption}   \label{qH}{\rm
Assume that for all $(x,k)\in \R^{d}\times\ss$,  % $x,y\in \R^{d}$ and $k,l\in \ss$,
 we have
\begin{align}\label{q}
&q_{k}(x) :=-q_{kk}(x) = \sum_{l\in\ss\setminus \{k\}}q_{kl}(x)\le H (k+1),  \\% \quad \hbox{for all} \quad (x,k) \in \R^{d} \times \ss,\\
%\label{eq-q-Lipschitz}
%&  |q_{kl}(x) - q_{kl}(y)|  \le H |x-y|, \\ \intertext{and}
% \begin{equation}
\label{eq:switching-2nd-moment-condition}
%\int_{\R_{+}} h^{2}(x,k,z) \m(\d z) & =
& \sum_{l\in\ss\setminus \{k\}}(f(l)-f(k)) q_{kl} (x) \le H( 1+ |x|^{2} + f(k) ),
% \end{equation}
 % To see this, consider $\d \begin{pmatrix} X(t) \\  \La (t) \end{pmatrix}$ (as in an earlier version).
\end{align} where the constant $H>0$ is the  same as in
Assumption~\ref{I1} without loss of generality, and the function $f:\ss\mapsto \R_{+}$ is nondecreasing and satisfies $f(m)\to \infty$ as $m \to \infty$.  In addition, assume
 there exists some   $\delta \in (0, 1]$ such that \begin{equation}
\label{eq-q(x)-holder}
\sum_{l\in \ss\setminus\{k\}}| q_{kl}(x) - q_{kl}(y)| \le H |x-y|^{\delta}
\end{equation} for all $k\in \ss$ and $x,y\in \R^{d}$.
}\end{Assumption}

\section{Existence and Uniqueness}\label{sect:ExisUniq}

In this section, we prove that there exists a unique strong solution
to the system  \eqref{eq:X}--\eqref{eq:La}. % (\ref{eq:LaSDE}).

\begin{Theorem}   \label{prop-EU1} Suppose that Assumptions~\ref{I1} and \ref{qH} hold.
Then for each $(x,k) \in \R^{d} \times \ss$, system \eqref{eq:X} and % (\ref{eq:LaSDE})
\eqref{eq:La} has a unique strong solution
$(X(t),\Lambda(t))$ with $(X(0),\Lambda(0))=(x,k)$.
\end{Theorem}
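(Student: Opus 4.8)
The plan is to combine a constructive existence argument with a pathwise uniqueness estimate and then appeal to the Yamada--Watanabe theorem on weak versus strong solutions. The central difficulty, and the reason the direct citation to \cite{Situ-05} no longer applies, is that the total switching rate $q_k(x)$ is only bounded by $H(k+1)$ rather than by a constant, so the chain $\La$ may in principle switch arbitrarily fast as it climbs to high regimes; the whole game is to rule out explosion without the benefit of a finite-range assumption on the switching.

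First I would construct a weak solution by an interlacing scheme with exponential killing. Fix $(x,k)$. Between switches the regime is frozen, and with the regime frozen at $k$ the equation \eqref{eq:X} is an ordinary jump diffusion whose coefficients obey the linear growth bound \eqref{linear-growth} and the Lipschitz bound \eqref{Lip-condition}, hence admits a unique strong solution $X^{(k)}$ by classical theory. To splice these pieces together I would encode the switching through an independent Poisson random measure $N_1(\d t,\d z)$ on $[0,\infty)\times[0,\infty)$ with Lebesgue intensity, taking the destination to be a step function $h(x,k,z)$ built by laying out consecutive intervals of lengths $q_{kl}(x)$, $l\neq k$; equivalently one uniformizes the $k$-th row at the constant rate $\bar q_k=H(k+1)$ and thins, accepting a move with probability $q_{kl}(x)/\bar q_k$. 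Since $\bar q_k<\infty$ for each fixed $k$, the holding time to the next (possibly virtual) switch is a genuine exponential random variable, so the recursion $\tau_0=0<\tau_1<\tau_2<\cdots$ is well defined and yields a process $(X,\La)$ solving \eqref{eq:X}--\eqref{eq:La} on the stochastic interval $[0,\zeta)$, where $\zeta=\lim_n\tau_n$ is the explosion time.

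The key step is to show $\zeta=\infty$ almost surely. Here I would use the Lyapunov function $V(x,k)=1+|x|^2+f(k)$ with $f$ as in Assumption~\ref{qH}. Applying the generator $\A$ of $(X,\La)$ to $V$, the diffusion and jump part acting on $1+|x|^2$ is controlled by the growth bound \eqref{linear-growth}, while the switching part is exactly $\sum_{l\neq k}q_{kl}(x)(f(l)-f(k))$, which by \eqref{eq:switching-2nd-moment-condition} is at most $H(1+|x|^2+f(k))$. Combining these gives $\A V(x,k)\le C\,V(x,k)$ for some constant $C$. Introducing the stopping times $\beta_n=\inf\{t:|X(t)|+\La(t)\ge n\}$ and applying Dynkin's formula together with Gronwall's inequality yields $\E[V(X(t\wedge\beta_n),\La(t\wedge\beta_n))]\le V(x,k)\,e^{Ct}$; letting $n\to\infty$ forces $\beta_n\to\infty$, so $\zeta=\infty$ and the solution is global. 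The point is that $f(k)\to\infty$ penalizes escape into high regimes and thereby simultaneously precludes blow-up of $X$ and the accumulation of infinitely many fast switches.

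Finally I would establish pathwise uniqueness and invoke Yamada--Watanabe. Given two solutions driven by the same $B$, $N$ and $N_1$ from a common starting point, the continuous and Poisson-integral parts of $X$ are handled by the Lipschitz bound \eqref{Lip-condition} in the usual Gronwall fashion once the regimes agree. The delicate ingredient is the switching: because $\La$ and $\La'$ are driven by the common measure $N_1$ through the step functions $h(X(s-),\La(s-),\cdot)$ and $h(X'(s-),\La'(s-),\cdot)$, the rate at which they can disagree is governed by the Lebesgue measure of the symmetric difference of the switching intervals, which is bounded by $\sum_{l\neq k}|q_{kl}(x)-q_{kl}(y)|\le H|x-y|^\delta$ thanks to the H\"older condition \eqref{eq-q(x)-holder}. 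Coupling this estimate for $\P\{\La(t)\neq\La'(t)\}$ with the $L^2$ estimate for $X-X'$ and iterating (a Gronwall-type argument adapted to the exponent $\delta$) shows that $(X,\La)$ and $(X',\La')$ coincide almost surely. Weak existence together with pathwise uniqueness then delivers the existence of a unique strong solution. I expect the non-explosion estimate and the H\"older-based control of the switching discrepancy to be the two main technical obstacles.
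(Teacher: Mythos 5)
Your overall architecture coincides with the paper's: Step 1 is the same interlacing construction with exponentially killed frozen-regime jump diffusions (the paper realizes your ``uniformization'' idea through independent mean-one exponentials $\xi_n$ and the stopping times \eqref{eq-tau1-defn}, \eqref{eq-theta-n+1-defn}, and it encodes the switching through exactly the step function $h$ of \eqref{eq-h-fn} driven by a Poisson random measure $N_1$ with Lebesgue intensity); your uniqueness step is also the paper's Step 3 almost verbatim --- stop at the first regime-disagreement time $\zeta$, run Gronwall with \eqref{Lip-condition} on $[0,t\wedge\zeta]$, then bound $\P\{\zeta\le t\}$ by the Lebesgue measure of the symmetric difference of the switching intervals, which \eqref{eq-q(x)-holder} controls by $H\,\E|\wdt X - X|^{\delta}$, forcing $\P\{\zeta\le t\}=0$. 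Where you genuinely diverge is the non-explosion step. The paper does \emph{not} run a single Lyapunov argument there: it conditions on the event $A$ that $\tau_\infty$ occurs before the discrete component exits some finite set $\{0,\dots,m_0-1\}$, rules that out by the uniform rate bound $q_{\La(\tau_n)}\le Hm_0$ (holding times dominate rate-$Hm_0$ exponentials, so switches cannot accumulate), and on $A^c$ applies Dynkin's formula to $e^{-Ht}f(\La(t))$ stopped at $T\wedge\tau_\infty\wedge\wdt\tau_m$ to get $e^{HT}f(k)\ge f(m)\,\delta$, a contradiction as $m\to\infty$. Your proposal instead uses $V(x,k)=1+|x|^2+f(k)$ with $\A V\le CV$ and Gronwall at the exit times $\beta_n$; this is essentially the computation the paper performs separately in Lemma \ref{lem-no-explosion} (citing Meyn--Tweedie), and it is an attractive consolidation: it would merge Step 2 and Lemma \ref{lem-no-explosion} into one Khasminskii-type estimate.

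However, as written your Step 2 has a genuine lacuna. The Dynkin/Gronwall bound $\E[V(X(t\wedge\beta_n),\La(t\wedge\beta_n))]\le V(x,k)e^{Ct}$ presupposes that the process is defined at time $t\wedge\beta_n$, i.e.\ that $\beta_n\le\zeta=\tau_\infty$ almost surely --- but $(X,\La)$ is only constructed on $[0,\tau_\infty)$, and the Dynkin formula of Proposition \ref{prop-dynkin} is only available up to $\tau_\infty$. A priori, infinitely many switches could accumulate at a finite time \emph{while $(X,\La)$ remains in a bounded region}, in which case $\beta_n$ is undefined (or exceeds $\tau_\infty$) and your estimate never sees the explosion; your remark that the Lyapunov function ``simultaneously precludes'' blow-up and switch accumulation is not correct on its own, because $V$ is bounded on the region where the accumulation would occur. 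The missing ingredient is precisely the paper's event-$A$ argument: before $\beta_n$ one has $\La<n$, hence $q_{\La(\tau_j)}(\cdot)\le Hn$ by \eqref{q}, so by \eqref{eq-theta-n+1-distribution} each holding time satisfies $\P\{\theta_{j+1}>t\,|\,\F_{\tau_j+t}\}\ge e^{-Hnt}$ there, and infinitely many such holding times cannot sum to a finite value; this yields $\beta_n\le\tau_\infty$ a.s.\ for every $n$, after which your Lyapunov estimate legitimately gives $\beta_n\to\infty$ and hence $\tau_\infty=\infty$. With that one additional step inserted, your route is correct and arguably cleaner than the paper's two-case analysis.
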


The proof of this theorem is divided into three steps. In the first step, we construct a solution $(X,\La)$ to  \eqref{eq:X} and  \eqref{eq:La}  with $(X(0),\Lambda(0))=(x,k)$ on the interval $[0, \tau_{\infty})$, where $\tau_{\infty} \le \infty$ is a stopping time to be defined in \eqref{eq-tau-infty-defn}.  After some preparatory work, we then show in the second step that $\tau_{\infty} = \infty$ a.s. Finally we establish pathwise uniqueness for    \eqref{eq:X} and  \eqref{eq:La} in Step 3.
\begin{proof}[Proof of Theorem \ref{prop-EU1} (Step 1)]
% The proof is divided into two steps: Step 1 is devoted to existence and Step 2 deals with uniqueness.
% {\em Step 1.}
Here we use the ``interlacing procedure'' as termed in \cite{APPLEBAUM} to demonstrate that % idea in \cite{Down-95} to demonstrate that
 under Assumptions  \ref{I1} and \ref{qH}, the system \eqref{eq:X} and \eqref{eq:La} has a   (possibly local) weak  solution $(X,\La)$. To this end, let the complete filtered probability space $(\Omega, {\F}, \{{\F}_{t}\}_{t\ge 0}, \, \P )$, the $d$-dimensional standard Brownian motion $B$, and the Poisson random measure $N(\cdot, \cdot)$ on $[0, \infty) \times U$ be specified as in Section \ref{sect:formulation}. In addition,  let $\{\xi_{n}\}$ be a sequence of independent mean $1$ exponential random variables  on $(\Omega, {\F}, \{{\F}_{t}\}_{t\ge 0}, \, \P )$ that is independent of  $B$ and   $N$.  Fix some $(x,k)\in \R^{d}\times \ss$ and consider    % To begin with, we let $X^{(k)}$ denote the unique jump diffusion process determined by
 the stochastic differential equation
\begin{equation}
\label{eq-sde-k}
\begin{aligned}
  X^{(k)}(t)  & =x+
\displaystyle \int_{0}^{t}\sigma (X^{(k)}(s),k)\d B(s)+ \int_{0}^{t} b(X^{(k)}(s),k)\d s
  \\ & \quad+\displaystyle  \int_{0}^{t}\int_{U_{0}} c(X^{(k)}(s-),k,u)
\wdt{N}(\d s,\d u)
+\displaystyle \int_{0}^{t} \int_{U\setminus U_{0}}
c(X^{(k)}(s-),k,u)N(\d s,\d u). \end{aligned}
\end{equation}% together with the initial condition $X^{(k)}(0) =x$.
In view of Theorem IV.9.1 of \cite{IkedaW-89}, such a solution exists and is pathwise unique thanks to \eqref{linear-growth} and \eqref{Lip-condition} of Assumption \ref{I1}. Let \begin{equation}
\label{eq-tau1-defn}
 \tau_{1}= \theta_{1}: = \inf\set{t\ge 0: \int_{0}^{t} q_{k}(X^{(k)}(s))\d s > \xi_{1}}.
\end{equation} %where  $q_{k}(x): = -q_{kk}(x) = \sum_{l\in \ss\backslash\{k\}} q_{kl}(x)$ for all $(x,k) \in \R^{d}\times\ss$.
Then we have \begin{equation}\label{eq1-sw-mechanism}
\P\set{\tau_{1} > t| \F_{t}} = \P\set{\xi_{1} \ge  \int_{0}^{t} q_{k}(X^{(k)}(s))\d s\Big| \F_{t}} = \exp\set{-\int_{0}^{t} q_{k}(X^{(k)}(s))\d s}.
\end{equation}
%\chao{Let $X_{n}, n= 1, 2, \dots,$ be independent exponential random variables with means $1/\lambda_{n}$, if $\sum_{n} \frac{1}{\lambda_{n}} = \infty$, then $\P(\sum_{n} X_{n} = \infty) =1$.}
Thanks to \eqref{q} in Assumption \ref{qH},  % $q_{k}(x) <Hk$ for all $(x,k) \in \R^{d}\times \ss$.
we have $\P\{ \tau_{1} > t\} \ge e^{-H(k+1)t}$ and therefore    $\P(\tau_{1} > 0) =1$.
  % and thus the number of switches in any finite time interval is finite with probability 1.
We define a process $(X,\La) \in \R^{d}\times \ss$ on $[0, \tau_{1}]$ as follows:  \begin{displaymath}
 X(t) = X^{(k)}(t) \text{ for all } t \in [0, \tau_{1}], \text{ and }\La(t)  =    k   \text{ for all } t \in [0, \tau_{1}).
\end{displaymath}
   Moreover, we define $\La(\tau_{1})\in \ss$ according to the probability distribution: \begin{equation}\label{eq-switching-to-location}
 \P\set{\La(\tau_{1}) = l| \F_{\tau_{1}-}} = \dfrac{q_{kl}(X(\tau_{1}-))}{q_{k}(X(\tau_{1}-))} (1- \delta_{kl}) \one_{\{q_{k}(X(\tau_{1}-)) > 0  \}} + \delta_{kl} \one_{\{q_{k}(X(\tau_{1}-)) = 0  \}}.
 \end{equation}
 In general, having determined $(X,\La)$ on $[0, \tau_{n}]$, we let
 \begin{equation}
 \label{eq-theta-n+1-defn}
\theta_{n+1}: = \inf\biggl\{t\ge 0: \int_{0}^{t} q_{\La(\tau_{n})}(X^{(\La(\tau_{n}))}(s))\d s > \xi_{n+1}\biggr\}, \end{equation}   where
 \begin{displaymath}
\begin{aligned}
   X^{(\La(\tau_{n}))}(t)  & : =   X(\tau_{n}) +
\displaystyle \int_{0}^{t} \sigma (X^{(\La(\tau_{n}))}(s),\La(\tau_{n}))\d B(s)+ \int_{0}^{t} b(X^{(\La(\tau_{n}))}(s),\La(\tau_{n}))\d s
  \\ & \qquad+\displaystyle \int_{0}^{t} \int_{U_{0}} c(X^{(\La(\tau_{n}))}(s-),\La(\tau_{n}),u)
\wdt{N}(\d s,\d u) \\
& \qquad
+\displaystyle\int_{0}^{t} \int_{U\setminus U_{0}}
c(X^{(\La(\tau_{n}))}(s-),\La(\tau_{n}),u)N(\d s,\d u).\end{aligned}
\end{displaymath} As argued in  \eqref{eq1-sw-mechanism}, we have
\begin{equation}\label{eq-theta-n+1-distribution} \begin{aligned}
\P\set{\theta_{n+1} > t| \F_{\tau_{n}+t}} & = \P\set{\xi_{n+1} \ge \int_{0}^{t} q_{\La(\tau_{n})}(X^{(\La(\tau_{n}))}(s))\d s\Big|\F_{\tau_{n}+t}}  \\ & = \exp\set{-\int_{0}^{t} q_{\La(\tau_{n})}(X^{(\La(\tau_{n}))}(s))\d s}.
\end{aligned}\end{equation} Again, Assumption \ref{qH} implies that $\P\{\theta_{n+1} > 0 \} =1$.
Then we let \begin{equation}
\label{eq-tau-n+1-defn}
 \tau_{n+1} : = \tau_{n} + \theta_{n+1}
\end{equation} and define $(X,\La)$ on $[\tau_{n}, \tau_{n+1}]$ by
\begin{align}\label{eq-XLa-nth-segment}
X(t) =  X^{(\La(\tau_{n}))}(t -\tau_{n})  \text{ for } t\in  [\tau_{n}, \tau_{n+1}], \, \, \La(t) = \La(\tau_{n})  \text{ for } t\in  [\tau_{n}, \tau_{n+1}), \
\end{align}
and
\begin{equation}\label{eq-sw-n-mechanism}\begin{aligned}  \P& \set{\La(\tau_{n+1}) = l| \F_{\tau_{n+1}-}}  \\  & = \dfrac{q_{\La(\tau_n),l}(X(\tau_{n+1}-))}{q_{\La(\tau_n)}(X(\tau_{n+1}-))} (1- \delta_{\La(\tau_n),l})  \one_{\{q_{\La(\tau_n)}(X(\tau_{n+1}-)) > 0  \}} + \delta_{\La(\tau_n),l} \one_{\{q_{\La(\tau_n)}(X(\tau_{n+1}-)) = 0  \}}.
\end{aligned} \end{equation}

This ``interlacing procedure''
  uniquely    determines a strong Markov   process $(X,\La)\in \R^{d}\times \ss$ for all $t \in[0,\tau_{\infty})$, where
 \begin{equation}
\label{eq-tau-infty-defn}
 \tau_{\infty}=\lim_{n\to \infty}\tau_n.
\end{equation} Since the sequence $\tau_{n}$ is strictly increasing, the limit $\tau_{\infty} \le \infty$ exists. Moreover it follows from \eqref{eq-theta-n+1-distribution}--\eqref{eq-sw-n-mechanism} that the process  $(X,\La)$ satisfies \eqref{eq:X} and \eqref{eq:La} on $[0, \tau_{\infty})$.
    % Apparently, the process $(X,\La)$ has no finite-time explosion with probability one. %  This finishes the proof.
   %[We need to assume that for each $k\in \ss$, the jump diffusion process $X^{(k)}$ has no finite explosion time with probability one.]
  % The sequence $\{ \tau_{n}\}, n=1,2,\dots$ gives the ``switching times'' and satisfies $\P\{\lim_{n\to\infty}\tau_{n} = \infty \}=1$.
\end{proof}

\begin{Remark}
Note that in general condition \eqref{q} alone can not guarantee  that $\tau_{\infty} = \infty$ a.s. To see this, let us consider a continuous-time Markov chain $\La$ with state space $\ss = \{0, 1,\dots, \}$ and $Q$-matrix given by $Q= (q_{kl})$ such that $- q_{kk} =q_{k, (k+1)^{2}} =k+1$ and $q_{kl} =0$ for all   $l \in \ss\setminus\{k, (k+1)^{2}\}$.  For this example, \eqref{q} is satisfied.

Assume $\La(0) =0$, then $\La$ will stay in state 0 for an exponential amount of time with mean $1$ and then switch to state $1$, whose holding time   has exponential distribution with mean $\frac{1}{2}$; it next switches to state $4$, whose holding time   is exponentially distributed with mean $\frac{1}{5}$; and then switches to state $26$, whose holding time   is exponential with mean $\frac{1}{27}$; and so on. It is then clear that $\P(\tau_{\infty} < \infty) =1$. Of course, we can easily check that condition  \eqref{eq:switching-2nd-moment-condition} can not be satisfied for this example.
\end{Remark}

\begin{Remark}\label{HkvsH}
However, if the upper bound $H(k+1)$ in \eqref{q} of Assumption \ref{qH} is replaced by $H$, then we have $\tau_{\infty} = \infty$ a.s. and therefore the proof of Theorem \ref{prop-EU1} can be much simplified. Indeed, with the uniform upper bound, we have $\P\{\theta_{k} > t\}  \ge e^{- Ht}$ for all $k \in \mathbb N$ and $t > 0$ and hence \begin{equation}\label{Hfinite}
\begin{aligned}
\P\{ \tau_{\infty} =\infty\}  &  \ge \P \bigl\{  \{\theta_{k} > t \} \text{ i.o.}\bigr\} =  \P \Biggl\{ \bigcap_{m=1}^{\infty} \bigcup_{k=m}^{\infty} \{\theta_{k} > t \} \Biggr\}\\  & = \lim_{m\to\infty}\P \Biggl\{  \bigcup_{k=m}^{\infty} \{\theta_{k} > t \} \Biggr\}
 \ge \limsup_{m\to\infty}\P \{\theta_{m} > t   \}  \ge e^{-Ht}.
\end{aligned}
\end{equation}  %\fubao{Here I have made changes and used the `$\limsup_{m\to\infty}$' instead. Please check if it is OK.}
 Letting $t \downarrow 0$ yields that $ \P\{ \tau_{\infty} = \infty\} =1$. Thus the ``interlacing procedure'' directly leads to the existence of a solution $(X,\La)$ to \eqref{eq:X}--\eqref{eq:La} for {\em all} $t \in [0,\infty)$.
\end{Remark}

To proceed, we construct a family of disjoint intervals $\{\Delta_{ij}(x): i,j \in \ss\}$ on the positive half real line as follows:
\begin{align*}
  \Delta_{01}(x)  & = [0, q_{01}(x)),   \\
   \Delta_{02}(x)  & = [q_{01}(x)), q_{01}(x) + q_{02}(x)), \\
    &\ \   \vdots  \\
    \Delta_{10}(x) & = [q_{0}(x), q_{0}(x) + q_{10}(x)), \\
    \Delta_{12}(x) & = [q_{0}(x) + q_{10}(x)), q_{0}(x) + q_{10}(x) + q_{12}(x)), \\
    & \ \   \vdots   \\
      \Delta_{20}(x) & = [q_{0}(x) + q_{1}(x),q_{0}(x) +  q_{1}(x) + q_{20}(x)), \\
      & \ \   \vdots   \\
\end{align*}
 where for convenience of notations, we set % $q_{i}(x) : = \sum_{j \neq i} q_{ij}(x)$ and
 $\Delta_{ij}(x) = \emptyset$ if $q_{ij}(x) = 0$, $i \not= j$.  Note that for each $x\in \R^{n}$,  $\{\Delta_{ij}(x): i,j \in \ss\}$ are disjoint intervals, and the length of the interval $\Delta_{ij}(x)$ is equal to $q_{ij}(x)$, which is bounded above by $Hi$ thanks to Assumption~\ref{qH}.
 % Also observe that  $q_{i}(x) < \infty$ for all $x \in \R^{d}$ and $i \in \ss$  thanks to \eqref{(1.6)}: \begin{displaymath}
%q_{i}(x) = \sum_{j \neq i} q_{ij}(x)  \le \sum_{j \neq i} q_{ij}(x) (j -i)^{2} =  \sum_{j \in \ss} q_{ij}(x) (j -i)^{2}  \le H (x^{2} + i^{2} + 1) < \infty.
%\end{displaymath}
We then define a function $h$: $\R^{d} \times \ss \times \R_{+} \to \R$ by
\begin{equation}\label{eq-h-fn}h(x,k,r)=\sum_{l \in \ss}(l-k){\mathbf{1}}_{\Delta_{kl}(x)}(r).\end{equation} That is,
 % with the partition $\{\Delta_{kl}(x): k, l \in \ss \,\, \hbox{with}\,\, k\ne l\}$ used and
 for each $x\in \R^{d}$ and $ k \in \ss$,  we set $h(x,k,r)=l-k$ if $r \in\Delta_{kl}(x)$ for some $l\neq k$;
otherwise $h(x,k,r)=0$.

\begin{Proposition}\label{prop-dynkin}  Let Assumptions \ref{I1} and \ref{qH} hold. For any $f \in C_{c}^{2}(\R^{d}\times \ss)$, we have
 \begin{equation}\label{eq-dynkin}
\E_{x,k} [f(X(t\wedge \tau_{\infty}), \La(t\wedge \tau_{\infty}))] = f(x,k) + \E_{x,k} \biggl[\int_{0}^{t\wedge \tau_{\infty}} \A f(X(s),\La(s)) \d s \biggr],
\end{equation} where  \begin{equation}
\label{eq-operator}
\A f(x,k) : = \LL_{k}f(x,k) +  Q(x)f(x,k),
\end{equation}  with
\begin{align}
\label{eq-Lk-operator-defn}& {\LL}_{k} f(x,k)
: =\frac {1}{2}\tr\bigl(a(x,k)\nabla^2 f(x,k)\bigr)+\langle b(x,k),
\nabla f(x,k)\rangle\\
\nonumber &  \qquad \qquad \quad + \int_{U}
\bigl(f(x+c(x,k,u),k)-f(x,k)- \langle \nabla f(x,k), c(x,k,u)\rangle
{\mathbf{1}}_{\{ u \in U_{0}\}}\bigr)\Pi(\d u), \\
\label{eq-Qx-operator}
& Q(x)f(x,k)  : = \sum_{j\in \ss} q_{kj}(x) [f(x,j) - f(x,k)]= \int_{[0,\infty)}[ f(x, k+ h(x,k,z)) - f(x,k)] \m(\d z).
\end{align}
\end{Proposition}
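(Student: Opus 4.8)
\emph{The plan} is to read off the Dynkin formula directly from the interlacing construction of Step 1. On each interval $[\tau_n,\tau_{n+1})$ the discrete coordinate is frozen at $\La(\tau_n)$, so $X$ evolves as the jump diffusion \eqref{eq-sde-k} whose generator is exactly $\LL_k$ of \eqref{eq-Lk-operator-defn}, while at $\tau_{n+1}$ only $\La$ jumps, according to \eqref{eq-sw-n-mechanism}. A preliminary point I would record is that, since the clocks $\{\xi_n\}$ are independent of $B$ and $N$ and have continuous laws, each $\tau_{n+1}$ almost surely avoids the (countable) jump set of $N$; hence $X(\tau_{n+1})=X(\tau_{n+1}-)$, so the $n$-th switch contributes only through the change in the $\La$-coordinate.

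Writing $\rho_n:=\tau_n\wedge t$ and fixing $f\in C_c^2(\R^d\times\ss)$ (so that $f$ and its first two $x$-derivatives are bounded and $f(\cdot,k)\equiv 0$ for all large $k$), I would telescope $f(X(\rho_N),\La(\rho_N))-f(x,k)$ into \emph{diffusion increments} $f(X(\rho_{n+1}-),\La(\tau_n))-f(X(\rho_n),\La(\tau_n))$ and \emph{switching increments} $\bigl[f(X(\tau_{n+1}),\La(\tau_{n+1}))-f(X(\tau_{n+1}),\La(\tau_n))\bigr]\one_{\{\tau_{n+1}\le t\}}$. To each diffusion increment I apply the It\^o formula for jump diffusions to $f(\cdot,\La(\tau_n))$ along $X^{(\La(\tau_n))}$: because $\LL_k$ is the generator of \eqref{eq-sde-k}, this term equals $\int_{\rho_n}^{\rho_{n+1}}\LL_{\La(s)}f(X(s),\La(s))\,\d s$ plus a martingale increment assembled from the Brownian and compensated-Poisson integrals, and the boundedness of $\nabla f$ together with \eqref{linear-growth} make these genuine martingales after stopping at $\tau_N$.

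For the switching increments I would compute conditional expectations. By the clock law \eqref{eq-theta-n+1-distribution} the switch at $\tau_{n+1}$ occurs at instantaneous rate $q_{\La(s)}(X(s))$, and by \eqref{eq-sw-n-mechanism} it lands in state $l$ with probability $q_{\La(s),l}(X(s))/q_{\La(s)}(X(s))$; multiplying rate by jump size and summing over $l$ cancels the $q_{\La(s)}$ factor and produces exactly the compensator $\int Q(X(s))f(X(s),\La(s))\,\d s$. This is where the two expressions for $Q$ in \eqref{eq-Qx-operator} meet: with the jump map $h$ of \eqref{eq-h-fn} the identity $\int_{[0,\infty)}[f(x,k+h(x,k,z))-f(x,k)]\,\m(\d z)=\sum_{l\neq k}q_{kl}(x)[f(x,l)-f(x,k)]$ is immediate, since $h(x,k,\cdot)\equiv l-k$ on $\Delta_{kl}(x)$, an interval of Lebesgue length $q_{kl}(x)$.

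Taking expectations up to $\tau_N$ discards the martingale pieces and yields $\E_{x,k}f(X(t\wedge\tau_N),\La(t\wedge\tau_N))=f(x,k)+\E_{x,k}\int_0^{t\wedge\tau_N}\A f\,\d s$ with $\A=\LL_k+Q$ as in \eqref{eq-operator}; it then remains to let $N\to\infty$, using $\rho_N\uparrow t\wedge\tau_\infty$ and the boundedness of $f$ for the left side. \emph{The main obstacle} is this last passage to the limit: because $f$ is supported on finitely many regimes, $\A f(x,k)$ is bounded in $x$ but grows linearly in $k$, so controlling $\E_{x,k}\int_0^{t\wedge\tau_\infty}|\A f(X(s),\La(s))|\,\d s$ is delicate. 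I would handle it by working only up to the localizing times $\tau_N$ (before which $\La$ visits finitely many states, so every term is genuinely integrable and the compensator computation is a finite sum), and then invoke monotone/dominated convergence, using the growth bound $q_{\La(s)}(X(s))\le H(\La(s)+1)$ from \eqref{q} to dominate the integrand; this route also sidesteps having to know in advance that $\tau_\infty=\infty$, which is only established in Step 2.
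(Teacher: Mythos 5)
Your overall architecture is sound but genuinely different from the paper's. The paper does not telescope over the interlacing times at all: it first encodes the switching mechanism as an SDE, rewriting $\La$ via the jump function $h$ of \eqref{eq-h-fn} as $\La(t\wedge\tau_{\infty})=\La(0)+\int_{0}^{t\wedge\tau_{\infty}}\int_{\R_{+}}h(X(s-),\La(s-),r)N_{1}(\d s,\d r)$ (equation \eqref{eq:La-SDE}), where $N_{1}$ is an auxiliary Poisson random measure built from the switch-counting measure $\mathfrak p$; it then applies the It\^o formula to the coupled system $(X,\La)$, citing the argument of Lemma 3 on p.~105 of \cite{Skorohod-89}, obtains $f(X,\La)=f(x,k)+\int\A f\,\d s$ plus three compensated stochastic integrals, and takes expectations. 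Your per-excursion decomposition, with the compensator of the single switch computed directly from \eqref{eq-theta-n+1-distribution} and \eqref{eq-sw-n-mechanism} by the rate-times-jump-distribution argument, is more elementary and self-contained (no construction of $N_{1}$, no external citation), and your identification $\m\{r:h(x,k,r)=l-k\}=q_{kl}(x)$ reconciling the two forms of $Q$ in \eqref{eq-Qx-operator} is exactly the observation the paper makes. Your preliminary remark that $\tau_{n+1}$ a.s.\ avoids the jump times of $N$ is correct and is indeed needed for your bookkeeping. What the paper's route buys is the full semimartingale decomposition of $f(X,\La)$ in one stroke; what yours buys is transparency about where each piece of $\A$ comes from.

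There is, however, one step that does not hold as written: the passage $N\to\infty$. You propose dominated convergence with a dominating function obtained from $q_{\La(s)}(X(s))\le H(\La(s)+1)$, i.e.\ essentially $C(1+\La(s))$. But under Assumptions \ref{I1} and \ref{qH} alone nothing yet guarantees $\E_{x,k}\bigl[\int_{0}^{t}(1+\La(s))\,\d s\bigr]<\infty$: moment estimates for $\La$ appear only later (Proposition \ref{prop-moment-estimate}) and even then require the extra hypothesis \eqref{eq-2-17}, while condition \eqref{eq:switching-2nd-moment-condition} only controls $f(\La)$ for a nondecreasing $f$ that may grow arbitrarily slowly (e.g.\ $f(k)=\log(1+k)$ gives no control on $\E[\La(s)]$). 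So your dominating function is not known to be integrable, and the dominated convergence step fails as stated. The gap is repairable with a sharper bound: writing $F$ for the finite set of regimes on which $f(\cdot,k)\not\equiv 0$, one checks $|\A f(x,k)|\le C+\|f\|\sum_{j\in F}q_{kj}(x)$, since $\LL_{k}f$ is uniformly bounded and the only unbounded part of $Qf$ is the inflow rate into $F$; and $\int_{0}^{t\wedge\tau_{N}}\sum_{j\in F}q_{\La(s),j}(X(s))\,\d s$ is the compensator of the number of entries of $\La$ into $F$, which has expectation bounded uniformly in $N$ because each entry into $F$ initiates a holding time stochastically minorized, via \eqref{q}, by an exponential variable of rate $H(\max F+1)$, so the entry count up to time $t$ is dominated by a Poisson variable of mean $H(\max F+1)t$. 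This supplies a genuinely integrable dominant. (Alternatively, one may settle for the formula stopped at a localizing sequence, which is all that Step 2 of the paper actually uses, since there the authors stop additionally at $\wdt\tau_{m}$; in fairness, the paper's own appeal to \cite{Skorohod-89} delivers a priori only local martingales and is silent on this same integrability point.)
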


\begin{proof}
    Put $\lambda(t) : = \int_{0}^{t}q_{\La(s)} (X(s))\d s$
 and  $n(t): = \max\{n \in \mathbb N: \xi_{1} + \dots+ \xi_{n} \le \lambda(t) \}$  for all $t \in [0, \tau_{\infty})$,
% \fubao{Perhaps, we should write $t\in [0, \tau_{\infty})$ instead of $t \ge 0$, since we only define the component $\La (\cdot)$ on the time interval $[0, \tau_{\infty})$. However, I do not know how to proceed. Could we use a {\it \blue controlled Poisson process (or random measure)} method and idea as in Budhiraja(2011) or other similar methods and ideas? Let us consider this issue earnestly.}
where $\{ \xi_{n}, n=1,2,\dots\}$ is a sequence of independent exponential random variables with mean 1. % as in the proof of Proposition \ref{prop-EU1}.
Then in view of \eqref{eq-tau1-defn}, \eqref{eq1-sw-mechanism}, \eqref{eq-theta-n+1-defn},  \eqref{eq-theta-n+1-distribution}, and \eqref{eq-tau-n+1-defn}, the process $\{n(t\wedge\tau_{\infty}), t\ge 0\}$
 % $\wdt n(\cdot)$ with $$\wdt n(t) : =n (\lambda(t)), \text{ where }  \lambda(t) : = \int_{0}^{t}q_{\La(s)} (X(s))ds, \quad t \ge 0, $$
is a  counting process that counts the number of switches for the component $\La$.  We can regard $n(\cdot) $ as a nonhomogeneous Poisson process with random intensity function $q_{\La(t)} (X(t))$, $t \in [0, \tau_{\infty})$.
 % \fubao{Here, we also should write $t\in [0, \tau_{\infty})$ instead of $t \ge 0$.}
%{\blue In addition, %as we mentioned in the proof of Proposition \ref{prop-EU1},
%as we observed in the proof of Proposition \ref{prop-EU1}, under Assumption
% % $q_{k}(x) \le H < \infty$,
% \ref{qH},  we have $\P\{n(t) < \infty, \text{ for any } t \ge 0 \} =1$.}
% Indeed, if $$\wdt \tau_{1} : = \inf\{t \ge 0: \wdt n(t) > 0 \} = \inf\{t \ge 0:  n(\lambda(t)) > 0 \}, $$ then \begin{align*}
% \P(\wdt \tau_{1} > t | \F_{t}) & = \P(n(\lambda(t))= 0 | \F_{t}) = e^{-\lambda (t)} = e^{-\int_{0}^{t}q_{\La(s)} (X(s))ds}.
%\end{align*}

Now for any $s < t \in   [0, \tau_{\infty})$ and  $A\in \B( \ss)$, let
 % $N_{1}((s,t]\times A) : =\m(A)[ \wdt n(t) - \wdt n (s)] $, where $\m$ is the Lebesgue measure on $[0,\infty)$.
$$\mathfrak p((s,t]\times A) : = \sum_{u \in (s, t]} \one_{ \{ \La(u) \neq \La(u-),   \La(u)  \in A\}} \text{ and }  \mathfrak p(t,A) : =  \mathfrak p((0,t] \times A).$$
Then we have $\mathfrak p(t\wedge\tau_{\infty}, \ss) =   n(t\wedge\tau_{\infty})$   and
% \begin{displaymath}\E[\mathfrak p(t,A)|\F_{t}] =\int_{0}^{t}\int_{A} q_{\La(s-),l}(X(s-))\d l \d s.\end{displaymath} In addition, it follows that
\begin{equation}
\label{eq0-La-sde}\begin{aligned}
 \La(t\wedge\tau_{\infty}) & =  \La(0) + \sum_{k=1}^{\infty}[\La(\tau_{k})- \La(\tau_{k}-)] \one_{\{\tau_{k}\le t\wedge\tau_{\infty}\}} \\ & = \La(0) + \int_{0}^{t\wedge\tau_{\infty}} \int_{\ss}[l- \La(s-)] \, \mathfrak p (\d s, \d l).
\end{aligned}\end{equation}
 % [Question: What is $\E[\mathfrak p(t,A)]$?]

  We can also define a Poisson random measure $N_{1}(\cdot,\cdot)$ on $[0, \infty) \times \R_{+}$ by  $$N_{1}(t\wedge\tau_{\infty}, B) : = \sum_{l\in \ss \cap B} \mathfrak p(t\wedge\tau_{\infty},l), \ \text{ for all } t \ge 0 \text{ and } B \in \mathcal B(\R_{+}).$$
   %   be  with  $N_{1}(t, (0, \infty))  = \wdt n(t)$.
    % such that $\E [N_{1}(dt,dr)] = dt\m(dr) $, in which $\m(dr)$ is the Lebesgue measure on $\R_{+}$.
   %   Assume the Poisson random measure $N_{1}(\cdot,\cdot)$ is independent of the Brownian motion $B(\cdot)$ and the Poisson random measure $N(\cdot,\cdot)$.   Without loss of generality, assume that $\{{\F}_{t}\}_{t\ge 0}$ is an increasing family of sub $\sg$-algebras of ${\F}$ such that $B(t)$, $p(t)$ and $p_{1}(t)$ are ${\F}_{t}$-adapted for every $t$, and also assume that the filtration $\{{\F}_{t}\}_{t \ge 0}$ satisfies the usual condition (i.e., it is right continuous and ${\F}_{0}$ contains all $\P $-null sets).

Observe that for any $(x,k)\in \R^{d}\times \ss$ and $l \in \ss\backslash\{k\}$, we have \begin{displaymath}
 \m\{r\in [0,\infty): h(x,k,r) \neq 0 \} = q_{k}(x) \text{ and }\m\{ r \in [0,\infty): h(x,k,r) = l-k\} = q_{kl}(x),
\end{displaymath} where $\m$ is the Lebesgue measure on $ \R_{+}$. Therefore we can rewrite \eqref{eq-sw-n-mechanism} and  \eqref{eq0-La-sde} as
\begin{equation}\label{eq:La-SDE}
 \La(t\wedge\tau_{\infty})=\La(0) + \int_{0}^{t\wedge\tau_{\infty}}\int_{\R_{+}}h(X(s-),\La(s-),r){N}_{1}(\d s,\d r).
\end{equation}
Then we can use the same argument as that in the proof of Lemma 3 on p. 105 of \cite{Skorohod-89} to  show that  for any $f\in C^{2}(\R^{d}\times \ss)$, we have
\begin{align*}
f &(X(t\wedge\tau_{\infty}), \La(t\wedge\tau_{\infty})) \\ & = f(x,k) + \int_{0}^{t\wedge\tau_{\infty}} \A f(X(s),\La(s))\d s +
\int_{0}^{t\wedge\tau_{\infty}} \nabla f(X(s), \La(s)) \cdot \sigma(X(s),\La(s)) \d B(s) \\
 & \  + \int_{0}^{t\wedge\tau_{\infty}} \int_{U_{0}}[f(X(s-) + c(X(s-),\La(s-),u),\La(s-)) - f(X(s-),\La(s-)) ] \wdt N(\d s, \d u)\\
 & \ +  \int_{0}^{t\wedge\tau_{\infty}} \int_{\R_{+}}[f(X(s-), \La(s-) + h(X(s-),\La(s-),r)) - f(X(s-),\La(s-)) ] \wdt N_{1}(\d s, \d r),
\end{align*} where $\wdt N_{1}(\d s, \d r) : =   N_{1}(\d s, \d r) - \d s \m(\d r)$.  In particular, \eqref{eq-dynkin} follows.
\end{proof}

We immediately have the following corollary from Proposition \ref{prop-dynkin}.
\begin{Corollary}\label{cor-generator} Suppose Assumptions \ref{I1} and \ref{qH}.
Then the extended  generator of the process $(X,\La)$ is given by $\A$ of \eqref{eq-operator} on the temporal interval $[0,\tau_{\infty})$.
% by the following. For any $f\in C^{2}_{0}(\R^{d}\times \ss)$,
%\begin{displaymath}
%\lim_{t\downarrow 0}\frac{1}{t}\E_{x,k} [f(X(t), \La(t))- f(x,k)] =  \A f(x,k) =   \LL_{k}f(x,k)+Q(x)f(x,k).
%\end{displaymath}
\end{Corollary}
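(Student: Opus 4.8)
The plan is to deduce the statement directly from the pathwise semimartingale decomposition already obtained inside the proof of Proposition~\ref{prop-dynkin}, rather than from the expectation form of Dynkin's formula alone. Recall that the extended generator of $(X,\La)$ on $[0,\tau_{\infty})$ is, by definition, the operator whose domain consists of those measurable $f$ for which there is a measurable $g$ such that
\[
M^{f}(t) := f\bigl(X(t\wedge\tau_{\infty}),\La(t\wedge\tau_{\infty})\bigr) - f(x,k) - \int_{0}^{t\wedge\tau_{\infty}} g\bigl(X(s),\La(s)\bigr)\,\d s
\]
is an $\{\F_{t}\}$-local martingale, in which case one sets $\A f = g$. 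Thus the corollary reduces to verifying that, for every $f\in C_{c}^{2}(\R^{d}\times\ss)$, the process $M^{f}$ with $g=\A f$ of \eqref{eq-operator} is a local martingale.

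First I would invoke the identity established in the proof of Proposition~\ref{prop-dynkin}, which already writes $f(X(t\wedge\tau_{\infty}),\La(t\wedge\tau_{\infty})) - f(x,k) - \int_{0}^{t\wedge\tau_{\infty}}\A f\,\d s$ as the sum of exactly three stochastic integrals: the Brownian integral against $\sigma$, the compensated small-jump integral against $\wdt N$, and the compensated switching integral against $\wdt N_{1}$. Hence $M^{f}$ coincides with this sum, and it remains only to check that each summand is a local martingale on $[0,\tau_{\infty})$.

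Second, I would introduce the localizing sequence $\beta_{m} := \tau_{m}\wedge\inf\{t\ge 0:\ |X(t)|\ge m\}$. Since $\tau_{m}\uparrow\tau_{\infty}$ and $X$ is c\`adl\`ag (hence bounded) on every compact subinterval of $[0,\tau_{\infty})$, one gets $\beta_{m}\uparrow\tau_{\infty}$; moreover on $[0,\beta_{m}]$ the state satisfies $|X|\le m$, while $\La$ visits only the finitely many values $\La(\tau_{0}),\dots,\La(\tau_{m-1})$ and is therefore bounded by a finite random constant $K_{m}$. Because $f\in C_{c}^{2}$, it is bounded and globally Lipschitz (with constant $L$) with bounded derivatives. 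For the Brownian term the integrand $\nabla f\cdot\sigma$ is bounded (it vanishes off the compact support of $\nabla f$, where $\sigma$ is continuous), so it is even a genuine square-integrable martingale. For the $\wdt N$ term the Lipschitz estimate gives $\int_{U_{0}}\bigl|f(X+c,\La)-f(X,\La)\bigr|^{2}\Pi(\d u)\le L^{2}\int_{U_{0}}|c|^{2}\Pi(\d u)\le L^{2}H(1+|X|^{2})\le L^{2}H(1+m^{2})$ on $[0,\beta_{m}]$ by \eqref{linear-growth}, so the stopped integral is again a true martingale. For the switching term, $\bigl|f(\cdot,k+h)-f(\cdot,k)\bigr|\le 2\|f\|_{\infty}$ and $\m\{r:\ h(x,k,r)\neq 0\}=q_{k}(x)\le H(k+1)$ by \eqref{q}, whence $\int_{\R_{+}}\bigl|f(X,\La+h)-f(X,\La)\bigr|^{2}\m(\d r)\le 4\|f\|_{\infty}^{2}H(\La(s)+1)\le 4\|f\|_{\infty}^{2}H(K_{m}+1)$ on $[0,\beta_{m})$, so $\int_{0}^{t\wedge\beta_{m}}\!\!\int_{\R_{+}}(\cdots)^{2}\m(\d r)\,\d s<\infty$ a.s., which is the standard criterion guaranteeing that the compensated integral is a local martingale.

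The only place any care is needed is the switching integral, precisely because its compensator intensity $q_{\La(s)}(X(s))$ is not bounded uniformly in the regime: condition \eqref{q} yields merely the state-dependent bound $H(k+1)$. Localizing by the switch count $\tau_{m}$ is exactly what tames this, since before the $m$-th jump the chain $\La$ can assume only finitely many values and the intensity is a.s.\ bounded on $[0,\beta_{m})$. Letting $m\to\infty$ and using $\beta_{m}\uparrow\tau_{\infty}$ then shows that $M^{f}$ is a local martingale on $[0,\tau_{\infty})$ for every $f\in C_{c}^{2}(\R^{d}\times\ss)$, which is precisely the assertion that $\A$ of \eqref{eq-operator} is the extended generator of $(X,\La)$ on $[0,\tau_{\infty})$.
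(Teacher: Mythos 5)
Your proposal is correct and takes essentially the same route as the paper: the corollary is deduced there directly from the semimartingale decomposition established in the proof of Proposition \ref{prop-dynkin}, which is precisely the identity you invoke, with $\A f$ of \eqref{eq-operator} as the drift and the three compensated integrals as the martingale part. The only difference is that you make explicit the localization (via $\tau_{m}$ together with the exit times of $X$, which tames the state-dependent switching intensity bounded only by $H(k+1)$ in \eqref{q}) needed to verify that each term is a local martingale --- a verification the paper leaves implicit in its ``immediately follows from Proposition \ref{prop-dynkin}'' remark.
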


%\fubao{The part from here to the end of the proof is prove the non-explosion. We may postpone this part to palace after Corollary 2.3 as a new Lemma 2.4 in place the present Lemma 2.4. If it is convenient, the new Lemma 2.4 may also prove the pathwise uniqueness result what you said before. Namely, the new Lemma 2.4 maybe contains {\blue the non-explosion result} and {\blue the pathwise uniqueness result}. By the way, in the Dynkin's formula (2.14), the $t$ needs be changed to $t\wedge \tau_{\infty}$. In the Corollary 2.3, we also should make changes accordingly. Please refer to pp. 521-522 of \cite{MeynT-93III} for statement that $\A_m$ is the extended generator of the process $\Phi_t^m$. We may use nomenclature of {the extended generator}, which is enough for us. Namely, in the new Corollary 2.3, removing the formula that  \begin{displaymath} \lim_{t\downarrow 0}\frac{1}{t}\E_{x,k} [f(X(t), \La(t))- f(x,k)] =  \A f(x,k) =   \LL_{k}f(x,k)+Q(x)f(x,k),\end{displaymath} we just say that $\A$ is the extended generator of the process $(X, \La)$, which lives in the temporal interval $[0,\tau_{\infty})$. Please think about whether this issue is right.}

 \begin{proof}[Proof of Theorem \ref{prop-EU1} (Step 2)] Now we are ready to show that $\tau_{\infty} = \infty$ a.s. and hence the    ``interlacing procedure'' presented in Step 1
 actually      determines a strong Markov   process $(X,\La)\in \R^{d}\times \ss$ for all $t \in[0,\infty)$. To this end, fix $(X(0),\La(0)) = (x,k)\in \R^{d}\times \ss$ as in Step 1 and for any $m \ge k+1$, we denote by $\wdt{\tau}_m :=\inf \{t\ge 0: \La(t)\ge m\}$ the first exit time for the $\La$ component from the finite set $\{0, 1, \dots, m-1\}$.  Let $A^{c}: =\{ \omega\in \Omega: \tau_{\infty} > \wdt \tau_{m} \text{  for all }m \ge k+ 1\}$ and $A : = \{ \omega\in \Omega: \tau_{\infty} \le \wdt \tau_{m_{0}} \text{ for some }m_{0} \ge k+1\}$. Then we have
  \begin{equation}
\label{eq-tau-infty=infty}
\P\{\tau_{\infty} = \infty\} = \P\{ \tau_{\infty} = \infty |A^{c}\} \P(A^{c}) + \P\{ \tau_{\infty} = \infty| A\} \P(A) .
\end{equation} %  Note that $ \P\{ \tau_{\infty}= \infty |A^{c}\}  =1$.
%In fact, for any $m \ge k+1$, we have $\wdt \tau_{m} \ge T_{m}$, where $T_{m}: = \inf\{t \ge 0: |X(t)| \vee \La(t) \ge m\}$.  we have $T_{m} \to \infty$ a.s. as $m \to \infty$. Therefore it follows that  $ \P\{ \tau_{\infty} = \infty |A^{c}\}  =1$ as claimed.

%Now it is clear that  \eqref{eq-tau-infty=infty} will be true   once we can show that
Let us first show that $\P\{ \tau_{\infty} = \infty| A\}  =1$. To this end,
we note that on the event $A$, we have $\La(\tau_{n}) \in \{0,1,\dots, m_{0}-1\}$ and hence by \eqref{q}, $$q_{\La(\tau_{n})}(X^{(\La(\tau_{n}))}(s)) \le H (\La(\tau_{n}) + 1) \le Hm_{0}, \text{ for all }n =1, 2,\dots \text{ and } s\ge 0.$$ Then it follows from   \eqref{eq-theta-n+1-distribution} that for all $n =0, 1,\dots$ \begin{align*}
\P\set{\theta_{n+1} > t| \F_{\tau_{n}+t}}  & = \exp\set{-\int_{0}^{t} q_{\La(\tau_{n})}(X^{(\La(\tau_{n}))}(s))\d s} \\ & \ge   \one_{A}\exp\set{-\int_{0}^{t} q_{\La(\tau_{n})}(X^{(\La(\tau_{n}))}(s))\d s} \\ &  \ge e^{-Hm_{0}t} \one_{A}.
\end{align*} Taking expectations on both sides yields $\P(\theta_{n+1} > t) \ge e^{-Hm_{0}t} \P(A)$ and hence $\P \{ \theta_{n+1} > t| A\} \ge e^{-Hm_{0}t} $. Thus, as argued  in \eqref{Hfinite}, we obtain that for any $t > 0$,
\begin{align*}
\P\{ \tau_{\infty} =\infty|A\} & \ge \P \bigl\{\{\theta_{n} > t \} \text{ i.o.}  |A \bigr\}
 % =\P \Biggl\{ \bigcap_{m=1}^{\infty} \bigcup_{j=m}^{\infty}\{\theta_{j} > t \} |A\Biggr\}
\ge \limsup_{m\to\infty}\P \{\theta_{m} > t  |A \}  \ge e^{-Hm_{0}t}.
\end{align*} %\fubao{Here I have also made changes and used the `$\limsup_{m\to\infty}$' instead. Please check if it is OK.}
Letting $t \downarrow 0$ yields that $ \P\{ \tau_{\infty} = \infty|A\} =1$.

If $\P(A) =1$ or $\P(A^{c}) =0$, then \eqref{eq-tau-infty=infty} implies that $\P\{\tau_{\infty} = \infty \} =1$ and the proof is complete. Therefore it remains to consider the case when $\P(A^{c}) >0$.  Denote $\wdt\tau_{\infty}: =\lim_{m\to \infty}\wdt \tau_{m}$.  Note that $A^{c}= \{ \tau_{\infty} \ge \wdt \tau_{\infty}\}$. Thus $\P\{\tau_{\infty} = \infty |A^{c} \}\ge \P\{\wdt \tau_{\infty} = \infty |A^{c} \}$ and hence \eqref{eq-tau-infty=infty}  will hold true if we can show  that \begin{equation}
\label{eq-tilde-tau-infty=infty}
\P\{\wdt \tau_{\infty} = \infty |A^{c} \} =1.
\end{equation} Assume on the contrary that \eqref{eq-tilde-tau-infty=infty} was false, then there would exist a $T > 0$ such that
\begin{displaymath}
\delta: = \P\{\wdt \tau_{\infty} \le T, A^{c} \} > 0.
\end{displaymath}
Let $f:\ss\mapsto \R_{+}$ be as in Assumption \ref{qH}.  Then by virtue of  the Dynkin  formula  \eqref{eq-dynkin},
% It\^o's formula
%\chao{This probabaly needs to be rearranged to a later place, say, after or before Corollary \ref{cor-generator}.} \fubao{Here the It\^o's formula should be the Dynkin's formula.}and the optional sampling theorem,
we have for any $m \ge k+1$,
\begin{align*}
 f(k) & = \E[e^{-H(T\wedge \tau_{\infty}\wedge \wdt \tau_{m})} f(\La(T\wedge \tau_{\infty}\wedge \wdt \tau_{m}))]  \\
               & \qquad + \E\biggl[ \int_{0}^{T\wedge \tau_{\infty}\wedge \wdt \tau_{m}} e^{-Hs}\biggl(H f(\La(s)) - \sum_{l\in \ss} q_{\La(s), l}(X(s)) [f(l) - f(\La(s))] \biggr) \d s\biggr] \\
 &  \ge \E[e^{-H(T\wedge \tau_{\infty}\wedge \wdt \tau_{m})} f(\La(T\wedge \tau_{\infty}\wedge \wdt \tau_{m}))]  \\
               & \qquad +  \E\biggl[ \int_{0}^{T\wedge \tau_{\infty}\wedge \wdt \tau_{m}} e^{-Hs} [H f(\La(s)) - H (1+ |X(s)|^{2} + f(\La(s))) ] \d s \biggr]  \\
               & \ge  \E[e^{-H(T\wedge \tau_{\infty}\wedge \wdt \tau_{m})} f(\La(T\wedge \tau_{\infty}\wedge \wdt \tau_{m}))],
\end{align*} where the  first inequality above follows from \eqref{eq:switching-2nd-moment-condition} in Assumption \ref{qH}.
Consequently we have
\begin{equation}
\label{eq-exp-HT-f(k)}
\begin{aligned}
 e^{HT} f(k)& \ge   \E[ f(\La(T\wedge \tau_{\infty}\wedge \wdt \tau_{m}))] \ge \E[f(\La(\wdt\tau_{m})) \one_{\{\wdt\tau_{m} \le T\wedge \tau_{\infty}\}}] \\ & \ge f(m) \P\{\wdt\tau_{m} \le T\wedge \tau_{\infty}\}   \ge f(m) \P\{\wdt\tau_{m} \le T\wedge \tau_{\infty}, A^{c}\} \\ & \ge  f(m) \P\{\wdt\tau_{\infty} \le T\wedge \tau_{\infty}, A^{c}\},
\end{aligned}
\end{equation}where the third inequality follows from the facts that $\La(\wdt \tau_{m}) \ge m$ and that $f $ is nondecreasing, and the last inequality follows from the fact that $\wdt \tau_{m} \uparrow \wdt \tau_{\infty} $.  Recall that $A^{c} = \{\tau_{\infty} \ge \wdt \tau_{\infty} \}$. Thus \begin{align*}
 \P\{\wdt\tau_{\infty} \le T\wedge \tau_{\infty}, A^{c}\} &= \P \{\wdt\tau_{\infty} \le T\wedge \tau_{\infty}, \wdt \tau_{\infty} \le   \tau_{\infty}  \} \\ &\ge \P \{\wdt\tau_{\infty} \le T, \wdt \tau_{\infty} \le \tau_{\infty} \} = \P\{\wdt\tau_{\infty} \le T, A^{c} \} = \delta> 0.
\end{align*}
Using this observation in \eqref{eq-exp-HT-f(k)} yields $\infty > e^{HT} f(k) \ge f(m) \delta \to \infty$ as $m\to \infty$, thanks to the fact that $f(m) \to \infty$ as $m\to \infty$. This is a contradiction.
This establishes \eqref{eq-tilde-tau-infty=infty} and therefore completes the proof. \end{proof}

\begin{Lemma}\label{lem-no-explosion} Under Assumptions \ref{I1} and \ref{qH},    the process $(X,\La)$ has no finite explosion time with probability one; that is, $\P\{ T_{\infty} =\infty\} =1$, where $$T_{\infty}: =\lim_{n\to \infty} T_{n}, \text{ and }  T_{n} : = \inf\{ t \ge0: |X(t)| \vee \Lambda(t) \ge n\}.$$
\end{Lemma}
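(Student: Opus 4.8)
The plan is to exhibit a Lyapunov function and run the standard localization argument built on the Dynkin formula of Proposition~\ref{prop-dynkin}, now that Step~2 guarantees $\tau_{\infty}=\infty$ a.s. I would take $V(x,k):=1+|x|^{2}+f(k)$, where $f$ is the nondecreasing function with $f(m)\to\infty$ from Assumption~\ref{qH}. For each fixed $k$ this is a polynomial in $x$, so $V(\cdot,k)\in C^{2}(\R^{d})$ and $V$ is a legitimate test function for the extended generator $\A$ of \eqref{eq-operator} after localization. The goal is the drift bound $\A V(x,k)\le C\,V(x,k)$ for some constant $C>0$ independent of $(x,k)$; everything else is a routine consequence.

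Second, I would compute $\A V=\LL_{k}V+Q(x)V$ term by term. Since $V(\cdot,k)$ differs from $|x|^{2}$ only by an additive constant, $\LL_{k}V(x,k)=\LL_{k}(|x|^{2})$, and using $\nabla(|x|^{2})=2x$, $\nabla^{2}(|x|^{2})=2I$ together with $|x+c|^{2}-|x|^{2}-\langle 2x,c\rangle=|c|^{2}$ one gets
\[
\LL_{k}V(x,k)=|\sigma(x,k)|^{2}+2\langle b(x,k),x\rangle+\int_{U}|c(x,k,u)|^{2}\Pi(\d u)+2\Big\langle x,\int_{U\setminus U_{0}}c(x,k,u)\Pi(\d u)\Big\rangle.
\]
The linear growth condition \eqref{linear-growth}, Cauchy--Schwarz, and $\Pi(U\setminus U_{0})<\infty$ then bound this by $C_{1}(1+|x|^{2})$. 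For the switching part, the $|x|^{2}$- and constant-terms cancel in the difference $V(x,j)-V(x,k)=f(j)-f(k)$, so $Q(x)V(x,k)=\sum_{l\ne k}q_{kl}(x)(f(l)-f(k))$, which is at most $H(1+|x|^{2}+f(k))$ by \eqref{eq:switching-2nd-moment-condition}. Adding the two estimates gives $\A V(x,k)\le C_{1}(1+|x|^{2})+H(1+|x|^{2}+f(k))\le C\,V(x,k)$ for a suitable $C$.

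Third, with the drift bound in hand I would apply the Dynkin formula \eqref{eq-dynkin} to $e^{-Cs}V$, localized at the exit time $T_{n}$, before which $(X,\La)$ is confined to the compact set $\{|x|\le n\}\times\{0,\dots,n-1\}$, so $V$ may be replaced by a $C_{c}^{2}$ function without changing $\A V$ there. Since $\A V-CV\le 0$, this yields
\[
\E_{x,k}\big[e^{-C(t\wedge T_{n})}V(X(t\wedge T_{n}),\La(t\wedge T_{n}))\big]\le V(x,k).
\]
On $\{T_{n}\le t\}$ one has $|X(T_{n})|\vee\La(T_{n})\ge n$, hence $V(X(T_{n}),\La(T_{n}))\ge\psi(n):=\min\{1+n^{2},\,1+f(n)\}\to\infty$; keeping only this event gives $e^{-Ct}\psi(n)\P\{T_{n}\le t\}\le V(x,k)$. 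Therefore $\P\{T_{\infty}\le t\}\le\P\{T_{n}\le t\}\le e^{Ct}V(x,k)/\psi(n)\to 0$ as $n\to\infty$, and letting $t\to\infty$ shows $\P\{T_{\infty}=\infty\}=1$.

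The main obstacle is purely technical: the Dynkin formula of Proposition~\ref{prop-dynkin} is stated for $f\in C_{c}^{2}$, whereas $V$ is unbounded, so the discounting factor $e^{-Cs}$ and the truncation at $T_{n}$ must be introduced carefully, via the product rule for $e^{-Cs}V$ and the observation that before $T_{n}$ the process never leaves a compact set on which $V$ coincides with a compactly supported function. One should also check that the lower bound $V(X(T_{n}),\La(T_{n}))\ge\psi(n)$ survives the jump occurring exactly at $T_{n}$, which is immediate since it only uses $|X(T_{n})|\vee\La(T_{n})\ge n$ and the monotonicity of $f$. As an aside, since $\tau_{\infty}=\infty$ forces only finitely many switches on any $[0,T]$ and each inter-switch segment solves a non-exploding SDE by Theorem~IV.9.1 of \cite{IkedaW-89}, one could alternatively deduce $\sup_{s\le T}|X(s)|<\infty$ directly; I prefer the Lyapunov argument as it is quantitative and self-contained.
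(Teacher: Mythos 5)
Your proposal is correct and is in essence the paper's own argument: the paper takes the same Lyapunov function $V(x,k)=|x|^{2}+f(k)$ (yours differs only by the additive constant $1$), performs the same generator computation --- $\LL_{k}(|x|^{2})$ bounded by a constant times $1+|x|^{2}$ via \eqref{linear-growth}, Cauchy--Schwarz and $\Pi(U\setminus U_{0})<\infty$, and $Q(x)V(x,k)=\sum_{l\neq k}q_{kl}(x)(f(l)-f(k))\le H(1+|x|^{2}+f(k))$ via \eqref{eq:switching-2nd-moment-condition} --- arriving at $\A V\le K(1+V)$, and then concludes at once by citing Theorem 2.1 of \cite{MeynT-93III}, whose content is exactly the discounted Dynkin/localization scheme you carry out by hand. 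The one point where your unpacking is stated slightly inaccurately is the claim that before $T_{n}$ one may replace $V$ by a $C_{c}^{2}$ function ``without changing $\A V$ there'': since $\A$ is nonlocal (the jump integral and the switching sum evaluate the test function at points outside the compact set), truncation does change $\A V$ on $\{|x|\le n\}\times\{0,\dots,n-1\}$; the standard repair, which is what the cited criterion packages, is one-sided --- choose $0\le V_{R}\le V$ with $V_{R}=V$ on a set of radius $R\ge n$, observe $\A V_{R}\le \A V\le CV=CV_{R}$ on the smaller compact set (using $q_{kl}(x)\ge 0$ for $l\neq k$ and $V_{R}\le V$ at all jump destinations), apply Proposition \ref{prop-dynkin} to $e^{-Cs}V_{R}$, and let $R\to\infty$ by monotone convergence before sending $n\to\infty$. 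With that routine adjustment your argument is complete, and your concluding aside --- that non-explosion also follows qualitatively from $\tau_{\infty}=\infty$ combined with the per-regime non-explosion of \eqref{eq-sde-k} --- is a valid, though less quantitative, alternative that the paper does not pursue.
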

\begin{proof}
Consider the function $V(x,k) : = |x|^{2 } + f(k)$, where the function $f: \ss\mapsto \R_{+}$ is as in Assumption \ref{qH}. Then we have from Assumptions \ref{I1} and \ref{qH} that \begin{align*}
\A V(x,k)  &= 2x\cdot b(x,k) + \frac12 \tr (\sigma\sigma'(x,k) 2I)  + \sum_{l \in \ss} q_{kl}(x) [f(l)-f(k)]\\
 &  \quad + \int_{U} [|x+ c(x,k,u)|^{2} - |x|^{2} -2 x\cdot c(x,k,u)\one_{U_{0}}(u)] \Pi(\d u) \\
 & \le |x|^{2} + |b(x,k) |^{2} + |\sigma(x,k) |^{2}  + H(1+ |x|^{2} + k)  \\
 & \quad + \int_{U} |c(x,k,u)|^{2} \Pi(\d u)+  \int_{U\setminus U_{0}} 2x\cdot c(x,k,u) \Pi(\d u)\\
 & \le K (1+ |x|^{2} + f( k) ) = K (1 + V(x,k)),
\end{align*} where $K$ is a positive constant. Then the conclusion follows from Theorem 2.1 of \cite{MeynT-93III}.
\end{proof}

%\fubao{Our present situation is not exactly same with the situation in Xi and Yin (2011) since we have used a probability distribution in \eqref{eq-switching-to-location}. Therefore, I sometimes think that the following pathwise uniqueness proof is needed. Anyway, I think the proof here is correct. Let us consider this issue once again.}

%\begin{Proposition} Suppose Assumptions \ref{I1} and \ref{qH} hold.   In addition, assume
 % \eqref{eq-2-17}, \eqref{eq-c0-nu-lip},  and \eqref{eq-q-Lip-new} hold
% there exists some $\kappa > 0$ and $\delta \in (0, 1]$ such that \begin{equation}
%\label{eq-q(x)-holder}
%\sum_{l\in \ss\setminus\{k\}}| q_{kl}(x) - q_{kl}(y)| \le \kappa |x-y|^{\delta}
%\end{equation} for all $k\in \ss$.
%\chao{I slightly changed the conditions over the last version. In particular, I no longer need \eqref{eq-2-17}.}
%\chao{If this proof is correct, then we should have Feller property under \eqref{eq-q(x)-holder}, which is   slightly weaker than \eqref{eq-q-Lip-new}.}
%Then
%pathwise uniqueness for \eqref{eq:X}--\eqref{eq:La} holds.
% \end{Proposition}

\begin{proof}[Proof of Theorem \ref{prop-EU1} (Step 3)] Finally we show that pathwise uniqueness for \eqref{eq:X}--\eqref{eq:La} holds. This, together with the existence result established in Steps 1 and 2, then implies that \eqref{eq:X}--\eqref{eq:La} has a unique strong solution $(X,\La)$.

Suppose $(X,\La)$ and $(\wdt X, \wdt \La)$ are two solutions to \eqref{eq:X}--\eqref{eq:La} starting from the same initial condition $(x,k) \in \R^{d}\times \ss$. Then we have
\begin{align*}
& \wdt X(t) - X(t)\\  &\ \ = \int_{0}^{t} [b(\wdt X(s),\wdt\La(s)) - b(X(s),\La(s))]\d s % \\ & \quad
 + \int_{0}^{t}  [\sigma(\wdt X(s),\wdt\La(s)) - \sigma(X(s),\La(s))]\d W(s) \\ & \qquad + \int_{U_{0}} [c(\wdt X(s-),\wdt \La(s-), z) - c(X(s-),\La(s-),z)] \wdt N(\d s, \d u) \\
& \qquad +\int_{U\setminus U_{0}} [c(\wdt X(s-),\wdt \La(s-), z) - c(X(s-),\La(s-),z)]   N(\d s, \d u),  \intertext{and}
 &\wdt \La(t) - \La(t) =\int_{0}^{t} \int_{\R_{+}}[h(\wdt X(s-),\wdt \La(s-), z) - h(X(s-),\La(s-),z)] N_{1} (\d s, \d z).
\end{align*}
% Consider the function $V(\wdt x, x, l,k): = |\wdt x-x|^{2} + \one_{\{ l\not=k \}}$. Then using Assumption \eqref{I1}, we have

Let $\zeta:=\inf\{ t \ge 0: \La(t) \neq \wdt \La(t)\}$ be the first time when the discrete components differ from each other and define $T_{R} : =\inf\{t \ge 0: |\wdt X(t)| \vee |X(t)| \vee \wdt \La(t) \vee \La(t) \ge R\}$ for $R > 0$. Lemma \ref{lem-no-explosion} implies that $T_{R}\to \infty$ a.s. as $R\to \infty$.
 % For any $t > 0$, we can write \begin{equation} \label{eq-Xt-difference} \wdt X(t) - X(t) = \wdt X(t\wedge \zeta) - X(t\wedge \zeta) + \one_{\{\zeta < t\}} [\wdt X(t) - X(t) - \wdt X(t\wedge \zeta) + X(t\wedge \zeta)].  \end{equation}
 Note that $\wdt \La(s) = \La(s)$ for all $s < \zeta$.
Detailed computations using \eqref{Lip-condition} in Assumption \ref{I1}  reveal that
% \fubao{Here, \eqref{eq-c0-nu-lip} was used, but \eqref{eq-c0-nu-lip} was not assumed.}
\begin{align*}
 \E& [ |\wdt X(t\wedge \zeta\wedge T_{R}) - X(t\wedge \zeta\wedge T_{R})|^{2} ] \\& = \E\biggl[\int_{0}^{t\wedge \zeta\wedge T_{R}} \biggl( 2(\wdt X(s)- X(s)) \cdot (b(\wdt X(s),\La(s)) - b(X(s),\La(s)))  \\
 &\qquad + |\sigma(\wdt X(s),\La(s)) - \sigma(X(s),\La(s))|^{2} \\
 & \qquad + \int_{U} |c(\wdt X(s-), \La(s-),u) - c( X(s-), \La(s-),u) |^{2} \Pi(\d u) \\
 & \qquad  +\int_{U\setminus U_{0}} 2 (\wdt X(s)- X(s)) \cdot (c(\wdt X(s-), \La(s-),u) - c( X(s-), \La(s-),u)) \Pi(\d u) \biggr)\d s \biggr] \\
 & \le K \E\biggl[\int_{0}^{t\wedge \zeta\wedge T_{R}}  |\wdt X(s)- X(s)|^{2} \d s\biggr] \\
 & = K \int_{0}^{t} \E[ |\wdt X(s\wedge \zeta\wedge T_{R})- X(s\wedge \zeta\wedge T_{R})|^{2} ]\d s,
\end{align*}
where $K$ is a positive constant. Applying Gronwall's inequality, we see that $$ \E [ |\wdt X(t\wedge \zeta\wedge T_{R}) - X(t\wedge \zeta\wedge T_{R})|^{2} ]= 0$$ for all $R > 0$ and thus $ \E [ |\wdt X(t\wedge \zeta) - X(t\wedge \zeta)|^{2} ]= 0$, which, in turn, implies that \begin{equation}
\label{eq-difference-1st-moment} \E [ |\wdt X(t\wedge \zeta) - X(t\wedge \zeta)|  ]= 0 \text{ and }
 \E [ |\wdt X(t\wedge \zeta) - X(t\wedge \zeta)|^{\delta} ]= 0,
\end{equation} where $\delta \in (0, 1]$ is the H\"{o}lder constant in \eqref{eq-q(x)-holder}.

Note that $\zeta \le t $ if and only if $\wdt \La(t\wedge \tau) - \La(t\wedge \tau)\neq 0$. Therefore it follows that
\begin{align*}
\P&\{\zeta \le t\}    =\E[\one_{ \{\wdt \La(t\wedge \tau) - \La(t\wedge \tau)\neq 0\}}]   \\
 & = \E \biggl[\int_{0}^{t\wedge \zeta} \int_{\R_{+}} (\one_{\{\wdt \La(s-) - \La(s-) + h(\wdt X(s-), \La(s-), z) - h(X(s-),\La(s-), z) \neq 0 \}}  -  \one_{\{\wdt \La(s-) - \La(s-)\neq 0 \}})  \m(\d z) \d s\biggr] \\
 & = \E\biggl[\int_{0}^{t\wedge \zeta} \int_{\R_{+}}  \one_{\{h(\wdt X(s-), \La(s-), z) - h(X(s-),\La(s-), z) \neq 0 \}} \m(\d z) \d s\biggr] \\
 %& \le \E\biggl[\int_{0}^{t\wedge \zeta} \biggl|\int_{\R_{+}}   \one_{\{h(\wdt X(s-), \La(s-), z) \neq 0\} }\m(\d z) - \int_{\R_{+}}   \one_{\{h(X(s-), \La(s-), z) \neq 0\}} \m(\d z)\biggr| \d s \biggr] \\
 & \le  \E\biggl[\int_{0}^{t\wedge \zeta} \sum_{l \in \ss, l \neq \La(s-)} | q_{\La(s-),l}(\wdt X(s-)) - q_{\La(s-),l}(X(s-))| \d s\biggr]\\
 & \le \kappa \E \biggl[\int_{0}^{t\wedge \zeta} | \wdt X(s-) ) -  X(s-) |^{\delta} \d s\biggr]  = \kappa \int_{0}^{t} \E[|\wdt X(s\wedge \zeta) - X(s\wedge \zeta) |^{\delta}] \d s =0,
\end{align*}  where the second inequality follows from \eqref{eq-q(x)-holder}.
In particular, it follows that      $$\E[\one_{\{\wdt \La(t) \neq \La(t) \}}] =0.$$ % \eqref{eq-q-Lip-new}.
%Then in view of the Cauchy-Schwartz inequality and Proposition \ref{prop-moment-estimate}, we can compute
%\begin{align*}
% \E& [\one_{\{\zeta < t\}} [\wdt X(t) - X(t) - \wdt X(t\wedge \zeta) + X(t\wedge \zeta)]] \\
% & \le (\P\{\zeta \le t\})^{1/2} (\E[|\wdt X(t) - X(t) - \wdt X(t\wedge \zeta) + X(t\wedge \zeta)|^{2}])^{1/2} =0.
%\end{align*}
%{\blue Note that the random variable $\wdt X(t) - X(t) - \wdt X(t\wedge \zeta) + X(t\wedge \zeta)$ is integrable. Thus it follows that $ \E[\one_{\{\zeta < t\}} [\wdt X(t) - X(t) - \wdt X(t\wedge \zeta) + X(t\wedge \zeta)]] =0 $.}
Note also that  $\wdt X(t) - X(t)$ is integrable and hence it follows that $\E[ |\wdt X(t) - X(t) |\one_{\{\zeta \le t \}}]  =0$.
Now we can compute
\begin{align*}
\E[|\wdt X(t) - X(t) | ] & =\E[ |\wdt X(t) - X(t) |\one_{\{\zeta > t \}}] + \E[ |\wdt X(t) - X(t) |\one_{\{\zeta \le t \}}]   \\
 & =\E[ |\wdt X(t\wedge \zeta) - X(t\wedge \zeta) |\one_{\{\zeta > t \}}] + \E[ |\wdt X(t) - X(t) |\one_{\{\zeta \le t \}}]\\
 & \le \E[ |\wdt X(t\wedge \zeta) - X(t\wedge \zeta) |] + 0 \\
 & =0,
\end{align*}
%\chao{Here I am using the property: If $X$ is integrable, then for any $\varepsilon >0$, there exists a $\delta> 0$ such that  $\E[X \one_{E}] < \varepsilon$ for all $E$ with $\P(E) < \delta$.}
%Using this in \eqref{eq-Xt-difference} reveals that $\E[|\wdt X(t) - X(t) | ] =0$.
 % Finally the fact $\P\{ \zeta \le t\} =0$
Recall that $\lambda((x,m),(y,n)): = |x-y| + \one_{\{m\neq n\}}$ is a metric on $\R^{d}\times \ss$. Hence we have shown that $$\E[\lambda((\wdt X(t), \wdt\La(t)), ( X(t),\La(t)))] = 0 \text{ for all }t \ge 0.$$ Thus $\P \{ (\wdt X(t), \wdt\La(t))=  ( X(t),\La(t))\} =1$ for all $t\ge 0$. This, together with the fact that the sample paths of $(X,\La)$ are right continuous, implies the desired pathwise uniqueness result.
\end{proof}
% Using the standard arguments (see, for example, the proof of Lemma 3.1 in \cite{ZhuYB-15}), we can obtain the following proposition:

We finish the section with some moment estimates for the solution $(X,\La)$ of  \eqref{eq:X}--\eqref{eq:La}.
\begin{Proposition}\label{prop-moment-estimate}
Suppose Assumptions \ref{I1} and \ref{qH}.  Then we have for any $T\ge 0$ \begin{equation}\label{eq:X^2-moment}
\disp   \E_{x,k}\left[ \sup_{0\le t\le T} |X(t)|^{2}\right]  \le C_{1}, \end{equation} where $C_{1} = C_{1} (x,T,H)$ is a positive constant.
Assume in addition   that \begin{equation}\label{eq-2-17}
\biggl(\sum_{l\neq k} (l-k) q_{kl}(x)\biggr)^{2} \le H (1+ |x|^{2} + k^{2})
\end{equation} for all $(x,k) \in \R^{d}\times \ss$. Then for any $T\ge 0$, we have
 % \chao{To obtain an estimate for $\La$, we probably need condition }
\begin{equation}\label{eq-2nd-moment}
\E_{x,k}\biggl[ \sup_{0\le t \le T} (|X(t)|^{2}+ \La(t)^{2})\biggr] \le C_{2},
\end{equation} where $C_{2}= C_{2}(x, k, T, H)$ is a positive constant.
\end{Proposition}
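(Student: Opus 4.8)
My plan is to establish the two bounds in turn. In each case I would first control the pointwise second moment by a Lyapunov/It\^o estimate together with Gronwall's inequality, and then upgrade to the supremum by applying Doob's $L^{2}$ maximal inequality (equivalently Burkholder--Davis--Gundy, BDG) to the martingale parts. Throughout I would work up to the localizing times $T_{n}$ of Lemma \ref{lem-no-explosion}, for which $T_{n}\uparrow\infty$ a.s., and remove the localization at the end by Fatou's lemma; this is what legitimizes the repeated use of the Dynkin formula \eqref{eq-dynkin} even though $|x|^{2}$ and $k^{2}$ are not in $C_{c}^{2}$.

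For \eqref{eq:X^2-moment} I would estimate $X$ directly from its integral equation \eqref{eq:X} rather than through It\^o's formula for $|X|^{2}$, so as to keep the relevant quadratic variations at the level of $|\sigma|^{2}$ and $\int_{U}|c|^{2}\Pi(\d u)$ (second powers) instead of fourth powers. The decisive feature is that the growth bound \eqref{linear-growth} is uniform in the regime $k$, so the estimate for $X$ decouples completely from the moments of $\La$. Splitting \eqref{eq:X} into its drift, Brownian, compensated-Poisson, and big-jump pieces, using $\bigl(\sum_{i=1}^{m}a_{i}\bigr)^{2}\le m\sum_{i=1}^{m}a_{i}^{2}$, and applying Doob's inequality (or BDG) to each martingale term, I would bound each contribution to $\E_{x,k}\bigl[\sup_{0\le s\le t}|X(s\wedge T_{n})|^{2}\bigr]$ by a constant times $\int_{0}^{t}\bigl(1+\E_{x,k}|X(s\wedge T_{n})|^{2}\bigr)\d s$; the big-jump integral over $U\setminus U_{0}$ is first written as its compensated martingale part plus the finite drift $\int_{U\setminus U_{0}}c\,\Pi(\d u)$, the latter controlled by Cauchy--Schwarz since $\Pi(U\setminus U_{0})<\infty$. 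Writing $\phi(t):=\E_{x,k}\bigl[\sup_{0\le s\le t}|X(s\wedge T_{n})|^{2}\bigr]$, this yields $\phi(t)\le C(1+|x|^{2})+C\int_{0}^{t}\phi(s)\,\d s$; Gronwall's inequality followed by $n\to\infty$ gives \eqref{eq:X^2-moment}.

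For \eqref{eq-2nd-moment} I would take $V(x,k):=|x|^{2}+k^{2}$ and bound $\A V$ via \eqref{eq-operator}. The spatial part $\LL_{k}(|x|^{2})\le K(1+|x|^{2})$ is handled exactly as in Lemma \ref{lem-no-explosion}, so everything hinges on the switching part
\[
Q(x)(k^{2})=\sum_{l\neq k}q_{kl}(x)(l^{2}-k^{2})
=\sum_{l\neq k}(l-k)^{2}q_{kl}(x)+2k\sum_{l\neq k}(l-k)q_{kl}(x).
\]
The cross term is precisely where \eqref{eq-2-17} enters: it gives $\bigl|\sum_{l\neq k}(l-k)q_{kl}(x)\bigr|\le H^{1/2}(1+|x|^{2}+k^{2})^{1/2}$, so the elementary bound $2k\,a\le k^{2}+a^{2}$ yields $2k\bigl|\sum_{l\neq k}(l-k)q_{kl}(x)\bigr|\le C(1+V(x,k))$. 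Granting a matching bound for the quadratic-variation term $\sum_{l\neq k}(l-k)^{2}q_{kl}(x)$, one obtains $\A V\le C(1+V)$; the Gronwall-then-Fatou argument of the previous paragraph then bounds $\E_{x,k}[|X(t)|^{2}+\La(t)^{2}]$ uniformly on $[0,T]$. Finally the supremum is recovered by applying Doob's and BDG inequalities to the martingale part of $X$ and to the martingale part of $\La$ read off from the representation \eqref{eq:La-SDE} (whose predictable quadratic variation is $\int_{0}^{t}\sum_{l\neq\La(s)}(l-\La(s))^{2}q_{\La(s),l}(X(s))\,\d s$), now that the pointwise second moments are known to be finite.

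The step I expect to be the main obstacle is controlling the quadratic-variation term $\sum_{l\neq k}(l-k)^{2}q_{kl}(x)$ of the switching component, which governs both the Lyapunov estimate for $\A(\La^{2})$ and the BDG bound for the switching martingale. Bounding the \emph{signed} first moment of the jumps through \eqref{eq-2-17} does not by itself control the \emph{second} moment of the jump sizes, so this is the one place where the quantitative hypotheses \eqref{q} and \eqref{eq-2-17} must be combined with the already-established bound on $\E_{x,k}\bigl[\sup_{0\le t\le T}|X(t)|^{2}\bigr]$ with real care. Once the generator estimate $\A V\le C(1+V)$ and the corresponding control of the switching martingale are in hand, the remaining Gronwall, maximal-inequality, and localization arguments are routine.
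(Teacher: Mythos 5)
Your treatment of \eqref{eq:X^2-moment} is in outline exactly the paper's argument (the paper compresses it to a citation of the standard BDG-plus-Gronwall computation of \cite{ZhuYB-15}, arriving at \eqref{eq:X^2-estimate}), and for \eqref{eq-2nd-moment} the paper likewise works from the representation \eqref{eq:La-SDE}: it splits $\La$ into compensated martingale plus compensator, applies BDG to the former and H\"older to the latter --- which is precisely where \eqref{eq-2-17} enters, as you predict --- and closes with a single Gronwall after adding the $X$-estimate. Your pointwise-Lyapunov-then-maximal-inequality organization is a harmless repackaging of this.

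The step you flag as the main obstacle and then leave as an assumption (``granting a matching bound for the quadratic-variation term $\sum_{l\neq k}(l-k)^{2}q_{kl}(x)$'') is, however, a genuine gap in your proposal, and it is exactly the content of the second-to-last inequality in the paper's \eqref{eq:La^2-estimate}. The paper closes it by combining \eqref{eq:switching-2nd-moment-condition} with \eqref{eq-2-17} through the displacement identity
\[
\sum_{l\neq k}(l-k)^{2}q_{kl}(x)=\sum_{l\neq k}(l^{2}-k^{2})q_{kl}(x)-2k\sum_{l\neq k}(l-k)q_{kl}(x),
\]
in which the first sum is the drift condition \eqref{eq:switching-2nd-moment-condition} read with $f(k)=k^{2}$, hence bounded by $H(1+|x|^{2}+k^{2})$, while the second is bounded by $k^{2}+H(1+|x|^{2}+k^{2})$ using \eqref{eq-2-17} and $2ka\le k^{2}+a^{2}$ --- the same trick you already used for the cross term in $\A(\La^{2})$. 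So no hypothesis beyond the two you invoke is needed; what is missing in your write-up is only this algebraic rerouting of the weight $(l-k)^{2}$ through $l^{2}-k^{2}$. One caveat worth recording: the argument requires \eqref{eq:switching-2nd-moment-condition} to hold for the specific choice $f(k)=k^{2}$, which is how the paper implicitly reads it in this proposition; with an arbitrary slowly growing $f$, as literally permitted by Assumption \ref{qH}, the bound can fail (for instance $q_{k,k+k^{3}}(x)\equiv k^{-2}$, $q_{k,0}(x)\equiv 1$ has zero signed drift, satisfies \eqref{q}, \eqref{eq-q(x)-holder}, and \eqref{eq:switching-2nd-moment-condition} with $f(k)=\log(1+k)$, yet $\sum_{l\neq k}(l-k)^{2}q_{kl}(x)\sim k^{4}$), so your instinct that \eqref{eq-2-17} alone cannot dominate the second moment of the jump sizes was sound. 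Incidentally, when you consult \eqref{eq:La^2-estimate}, note that the H\"older term there should read $\bigl(\sum_{l\neq\La(s-)}(l-\La(s-))\,q_{\La(s-),l}(X(s-))\bigr)^{2}$, matching \eqref{eq-2-17}; the extra square on $(l-\La(s-))$ in that line is a typo.
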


\begin{proof}
We notice that  the standard arguments using the linear growth condition \eqref{linear-growth} in Assumption \ref{I1} and  the BDG  inequality (see, for example, the proof of Lemma 3.1 in \cite{ZhuYB-15}) allow us to derive
\begin{equation}\label{eq:X^2-estimate}
\disp   \E_{x,k}\left[ \sup_{0\le t\le T} |X(t)|^{2}\right]  \le K_{1}   + K_2 \int^T_0 \E_{x,k}\left[\sup_{1\le u\le s} | X(u)|^{ 2} \right] \d s, \end{equation}
where $K_{1}, K_{2}$ are positive constants depending only on $x, H$, and $ T$. Then \eqref{eq:X^2-moment} follows from  Gronwall's inequality.

  It remains to establish \eqref{eq-2nd-moment} under the additional condition \eqref{eq-2-17}. Since \begin{align*}
\La(t) &  = k + \int_{0}^{t} \int_{\R_{+}} h(X(s-),\La(s-), r) N_{1}(\d s, \d r) \\
          &  = k + \int_{0}^{t} \int_{\R_{+}} h(X(s-),\La(s-), r) \wdt N_{1}(\d s, \d r)   +   \int_{0}^{t} \int_{\R_{+}} h(X(s-),\La(s-), r) \m(\d r) \d s,
\end{align*}
we can use the BDG and H\"older inequalities to compute % (Minkowski's integral inequality to handle the third term?)
\begin{align}\label{eq:La^2-estimate}
\nonumber \E_{x,k}\biggl[ \sup_{0\le t \le T}  \La(t)^{2}\biggr] & \le 3 k^{2} + 3 \E_{x,k} \biggl[ \sup_{0\le t \le T} \biggl( \int_{0}^{t} \int_{\R_{+}} h(X(s-),\La(s-), r) \wdt N_{1}(\d s, \d r) \biggr)^{2} \biggr] \\
 \nonumber &  \quad + 3 \E_{x,k} \biggl[ \sup_{0\le t \le T} \biggl( \int_{0}^{t} \int_{\R_{+}} h(X(s-),\La(s-), r)   \m(\d r) \d s \biggr)^{2} \biggr] \\
\nonumber  &\le 3 k^{2} +3 \E_{x,k} \biggl[  \int_{0}^{T} \int_{\R_{+}} h^{2}(X(s-),\La(s-), r) \m(\d r) \d s \biggr]\\
\nonumber  & \quad + 3 \E_{x,k} \Biggl[ \Biggl(  \int_{0}^{T} \sum_{l\in \ss, l\neq \La(s-)} (l-\La(s-)) q_{\La(s-),l}(X(s-)) \d s  \Biggr)^{2} \Biggr] \\
\nonumber  & \le 3 k^{2} + 3 \E_{x,k} \biggl[    \int_{0}^{T} \sum_{l\in \ss, l\neq \La(s-)} (l- \La(s-))^{2} q_{\La(s-),l}(X(s-)) \d s   \biggr] \\
\nonumber  & \quad + 3 \E_{x,k} \biggl[   \int_{0}^{T} 1^{2} \d s \int_{0}^{T} \biggl(\sum_{l\in \ss, l\neq \La(s-)} (l- \La(s-))^{2} q_{\La(s-),l}(X(s-)) \biggr)^{2}\d s  \biggr]  \\
\nonumber  & \le 3 k^{2} + 3 H (1+ T)  \E_{x,k} \biggl[  \int_{0}^{T} (1+ |X(s-)|^{2} + \La(s-)^{2} ) \d s\biggr]\\
 & \le 3 k^{2} + 3 H (1+ T)  \E_{x,k} \biggl[  \int_{0}^{T} \Bigl[ 1+\sup_{0\le u \le s} (|X(u-)|^{2} + \La(u-)^{2}) \Bigr] \d s\biggr],
\end{align}
where we used \eqref{eq:switching-2nd-moment-condition} and \eqref{eq-2-17} to derive the second last inequality.
Then \eqref{eq-2nd-moment} follows from a combination of  \eqref{eq:X^2-estimate} and \eqref{eq:La^2-estimate} and Gronwall's inequality.
% Suppose that $(S_{1}, \mu_{1})$ and  $(S_{2}, \mu_{2})$  are two measure spaces and $F: S_{1}\times S_{2}\to \R$ is measurable. Then Minkowski's integral inequality is:
%\begin{displaymath} \left[\int _{{S_{2}}}\left|\int _{{S_{1}}}F(x,y)\,d\mu _{1}(x)\right|^{p}d\mu _{2}(y)\right]^{{{\frac  {1}{p}}}}\leq \int _{{S_{1}}}\left(\int _{{S_{2}}}|F(x,y)|^{p}\,d\mu _{2}(y)\right)^{{{\frac  {1}{p}}}}d\mu _{1}(x), \end{displaymath} \begin{displaymath} \left[\int _{{S_{2}}}\left|\int _{{S_{1}}}F(x,y)\,d\mu _{1}(x)\right|^{2}d\mu _{2}(y)\right] \le (\int _{{S_{1}}}\left(\int _{{S_{2}}}|F(x,y)|^{2}\,d\mu _{2}(y)\right)^{{{\frac  {1}{2}}}}d\mu _{1}(x))^{2}\end{displaymath}
%\begin{align*} \E & \Biggl[ \Biggl(  \int_{0}^{T} \sum_{l\in \ss, l\neq \La(s-)} (l-\La(s-)) q_{\La(s-),l}(X(s-)) \d s  \Biggr)^{2} \Biggr]\\   & \le \biggl(\int_{0}^{T}\int_{\R_{+}} \E^{1/2}[h^{2}(X(s-),\La(s-),r)]\m(\d r) \d s\biggr)^{2}\\ & \le \biggl(\int_{0}^{T}\int_{\R_{+}} \E^{1/2} [\E[ h^{2}(X(s-),\La(s-),r)| (X(s-),\La(s-))]   \m(\d r) \d s\biggr)^{2}\\& = \biggl(\int_{0}^{T}\int_{\R_{+}} \E^{1/2} [\sum_{l\neq \La(s-)}(l-\La(s-))^{2} \frac{q_{\La(s-),l}(X(s-))}{q_{\La(s-)}(X(s-))}]\m(\d r) \d s\biggr)^{2} \end{align*}
\end{proof}
%%%%%%%%%%%%%%%%%%%%%%%%%%%%%%%%%%%%%%%%%%%%%%%%%%%%%%%%%%%%
\section{Feller Property}\label{sect-Feller}
We make the following assumption throughout this section:
\begin{Assumption}   \label{qkappa}{\rm
% There exists a positive integer $\kappa$ such that $q_{kl}(x)=0$ for all $k,l \in \ss$ with $|k-l|\ge \kappa+1$.\chao{Do we need this assumption in this section?}
% Moreover,
Suppose that  for all $x,z\in \R^{d} $ and $k \in \ss$, we have % with $k \neq l$,
% \begin{equation}\label{qLip} |q_{kl}(x)-q_{kl}(y)|\le H|x-y| \quad \hbox{for all} \quad x,y \in \R^{d}, \end{equation}
\begin{equation}
\label{eq-c0-nu-lip}
\int_{U} \abs{c(x,k,u) - c(z,k,u)}\Pi(\d u) \le H |x-z|,
\end{equation}  and
\begin{equation}
\label{eq-q-Lip-new}
\sum_{l\in \ss\backslash\{k\}} \abs{q_{kl}(x) - q_{kl}(y)} \le H \abs{x-y},   %  \quad \hbox{for all} \quad x,y \in \R^{d},
\end{equation} where the constant $H>0$ is the same as in
Assumption~\ref{I1} without loss of generality.}\end{Assumption}

\begin{Remark}
In \eqref{Lip-condition} of Assumption \ref{I1}, we assumed that $\int_{U} |c(x,k,u) - c(y,k,u)|^{2}\Pi(\d u) \le H |x-y|^{2}$ for all $x,y\in \R^{d}$ and $k \in \ss$. This condition in general does not necessarily imply \eqref{eq-c0-nu-lip}. Consider for example $U= (0, 1)$ and $\Pi (\d u) = \frac{\d u}{u^{1+\alpha}}$ with some $\alpha\in (0,1)$. We can check directly that the function $c(x,k,u): = x u^{\frac{3}{4}\alpha}$ satisfies  \eqref{Lip-condition}  but not \eqref{eq-c0-nu-lip}.
\end{Remark}
% \begin{Assumption}\label{assumption-c0-nu} For any $x,z\in \R^{d} $ and $k\in \ss$, we have %\chao{This assumption is similar to the one we used in  \cite{XiZ-16}.}
% \begin{equation} \label{eq-c0-nu-lip} \int_{U_{0}} \abs{c(x,k,u) - c(z,k,u)}\Pi(\d u) \le H |x-z|,
% \end{equation} where the constant $H>0$ is the same as in
%Assumption~\ref{I1}.
%\end{Assumption}

The main result of   this section is:
\begin{Theorem}\label{thm-Feller}
Suppose that Assumptions \ref{I1}, \ref{qH}, and  \ref{qkappa} %and \ref{assumption-c0-nu}
hold. Then the process $(X,\La)$ generated by the operator $\A$ of \eqref{eq-operator} has Feller property.
\end{Theorem}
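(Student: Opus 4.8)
The plan is to prove the (weak) Feller property by establishing that $P_{t}g(x,k):=\E_{x,k}[g(X(t),\La(t))]$ is continuous in $x$ for each fixed $k\in\ss$, each $t\ge 0$, and each $g\in C_{b}(\R^{d}\times\ss)$. Because the metric $\lambda$ contains the discrete metric $d(m,n)=\one_{\{m\neq n\}}$, convergence $(z,k')\to(x,k)$ forces $k'=k$ eventually, so spatial continuity in $x$ is the only thing to verify. To obtain it I would realize two copies of the process started from the nearby points $(x,k)$ and $(z,k)$ on a single probability space through the coupling operator $\wdt\A$ of \eqref{eq-A-coupling-operator}, whose construction is the technical core. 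The coupling should drive the continuous dynamics of $\wdt X$ and $\wdt Z$ by the \emph{same} Brownian motion and the \emph{same} Poisson random measure $N$ (a synchronous coupling for the diffusion and for both the small and large $c$-jumps), while coupling the two switching mechanisms in a basic/maximal fashion: whenever $\wdt\La$ and $\wdt\Xi$ occupy a common state $k$, they attempt to switch together to the same new state $\ell$ at the common rate $q_{k\ell}(\wdt X)\wedge q_{k\ell}(\wdt Z)$, the residual rates producing independent (mismatched) switches. By construction each marginal $(\wdt X,\wdt\La)$ and $(\wdt Z,\wdt\Xi)$ solves \eqref{eq:X}--\eqref{eq:La}, so by the uniqueness in Theorem \ref{prop-EU1} they carry the laws of the original process started from $(x,k)$ and $(z,k)$.

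The decisive object is the decoupling time $\tau:=\inf\{t\ge 0:\wdt\La(t)\neq\wdt\Xi(t)\}$. On $[0,\tau)$ the discrete components coincide, so the pair $(\wdt X,\wdt Z)$ obeys a system of stochastic differential equations sharing one switching state; using the Lipschitz conditions \eqref{Lip-condition} and \eqref{eq-c0-nu-lip} together with a Gronwall argument in the spirit of Step 3 of Theorem \ref{prop-EU1}, I would obtain the $L^{1}$ bound $\E[|\wdt X(t\wedge\tau)-\wdt Z(t\wedge\tau)|]\le C(t)\,|x-z|$. Next, since the two chains can separate only through a switch of one component unmatched by the other, the instantaneous decoupling rate at a time $s<\tau$ with common state $\wdt\La(s)=\wdt\Xi(s)=:i$ is at most $\sum_{\ell\neq i}|q_{i\ell}(\wdt X(s))-q_{i\ell}(\wdt Z(s))|$, which by the Lipschitz condition \eqref{eq-q-Lip-new} is dominated by $H|\wdt X(s)-\wdt Z(s)|$. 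Integrating this rate and inserting the previous estimate yields
\begin{displaymath}
\P\{\tau\le t\}\le H\int_{0}^{t}\E\bigl[|\wdt X(s\wedge\tau)-\wdt Z(s\wedge\tau)|\bigr]\,\d s\le C'(t)\,|x-z|.
\end{displaymath}

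Finally I would combine the two bounds. On $\{\tau>t\}$ the discrete parts agree, so $|g(\wdt X(t),\wdt\La(t))-g(\wdt Z(t),\wdt\Xi(t))|$ is governed purely by $|\wdt X(t)-\wdt Z(t)|$, which tends to $0$ in probability as $z\to x$ by the $L^{1}$ estimate; by continuity and boundedness of $g$ and dominated convergence the corresponding expectation vanishes. On $\{\tau\le t\}$ the integrand is controlled crudely by $2\norm{g}_{\infty}$. Hence
\begin{displaymath}
|P_{t}g(x,k)-P_{t}g(z,k)|\le \E\bigl[|g(\wdt X(t),\wdt\La(t))-g(\wdt Z(t),\wdt\La(t))|\,\one_{\{\tau>t\}}\bigr]+2\norm{g}_{\infty}\,\P\{\tau\le t\},
\end{displaymath}
and both terms vanish as $z\to x$, which gives the Feller property.

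The main obstacle is the construction of the coupling operator $\wdt\A$ and, above all, the bookkeeping at the decoupling time $\tau$. Before $\tau$ everything reduces to a single-regime jump-diffusion estimate, but one must verify that the basic coupling of the \emph{state-dependent} switching genuinely produces a decoupling rate dominated by $\sum_{\ell}|q_{i\ell}(\wdt X)-q_{i\ell}(\wdt Z)|$, and that after $\tau$ no control beyond boundedness of $g$ is required. In this scheme the two new hypotheses play transparent roles: \eqref{eq-c0-nu-lip} furnishes the $L^{1}$ closeness of $\wdt X$ and $\wdt Z$, while \eqref{eq-q-Lip-new} furnishes the smallness of $\P\{\tau\le t\}$ as $z\to x$.
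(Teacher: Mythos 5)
Your proposal is correct and follows essentially the same route as the paper: the same coupling operator $\wdt\A$ of \eqref{eq-A-coupling-operator} (synchronous coupling of the Brownian and Poissonian noises, basic coupling of the state-dependent switching), the same decoupling time $\zeta$, and the same two key estimates — your pre-decoupling Gronwall bound is the paper's \eqref{eq:diff-before-switching-mean}, and your bound $\P\{\zeta\le t\}\le C'(t)|x-z|$ is the paper's \eqref{eq:zeta<t-prob}, obtained there from the switching estimate \eqref{eq1-switching-est}. Two points of comparison are worth recording. First, your endgame is slightly leaner than the paper's: on $\{\zeta\le t\}$ you use only the crude bound $2\|g\|_{\infty}\,\P\{\zeta\le t\}$, whereas the paper additionally proves the post-switch estimate \eqref{eq:diff-after-switching-mean} via second moments and H\"older to obtain the full $L^{1}$ bound \eqref{eq-diff-mean} on $\E[|\wdt X(t)-\wdt Z(t)|]$ and then splits the test-function difference as in \eqref{eq:f-difference-mean}. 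Your shortcut suffices for the Feller property, since on $\{\zeta>t\}$ the regimes agree and $\wdt X(t)=\wdt X(t\wedge\zeta)$, so \eqref{eq:diff-before-switching-mean} already controls that event; the paper's extra work buys only the stronger statement that the spatial marginals stay $L^{1}$-close even after decoupling. Second, the one step you gloss over is how to make the $L^{1}$ Gronwall argument rigorous: $(x,z)\mapsto|x-z|$ is not $C^{2}$, and the paper handles this through the smooth approximations $\psi_{n}$, the functions $f_{n}(x,k,z,l)=\psi_{n}(|x-z|)+\one_{\{k\neq l\}}$, the generator estimate $\wdt\A f_{n}(x,k,z,k)\le \frac{1}{n}+C|x-z|$ of \eqref{eq-A-couple-est}, and monotone convergence. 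Your alternative appeal to ``the spirit of Step 3'' is also legitimate: before $\zeta$ the two components solve the same SDE with a common regime and identical driving noises, so the $L^{2}$ Gronwall estimate of the uniqueness proof plus Cauchy--Schwarz yields the $L^{1}$ bound using only \eqref{Lip-condition}; in that variant \eqref{eq-c0-nu-lip} is needed only if one works at the level of the coupling generator, as the paper does. Finally, the paper phrases the conclusion as Wasserstein convergence \eqref{eq:Wasserstein-convergence} via Theorem 5.6 of \cite{Chen04}, which is equivalent to your direct verification that $P_{t}g$ is continuous in $x$ for each $g\in C_{b}(\R^{d}\times\ss)$ and fixed $k$.
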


We will use the coupling method to prove Theorem \ref{thm-Feller}. To this end, let us first construct a coupling operator $\wdt \A$ for $\A$.
  For $x,z\in \R^{d}  $ and $i,j\in \ss $, we set
$$a(x,i,z,j)=\begin{pmatrix}
a(x,i) & \sigma (x,i) \sigma (z,j)' \\
\sigma (z,j) \sigma (x,i)' & a(z,j)
\end{pmatrix},\quad
b(x,i,z,j)=\begin{pmatrix}
b(x,i)\\
b(z,j) \end{pmatrix},$$ where $a(x,i) = \sigma (x,i)\sigma (x,i)'$
 and $ a(z,j)$ is similarly defined. % $ a(z,j) =\sigma (z,j)\sigma (z,j)' $.
 Next, for $f(x,i, z,j) \in C_{c}^{2} (\R^{d} \times \ss\times \R^{d} \times \ss )$, we define
\begin{equation}\label{eq-Omega-d-defn}
\wdt {\Omega}_{\text{diffusion}}f(x,i,z,j)=\frac
{1}{2}\hbox{tr}\bigl(a(x,i,z,j)D^{2}f(x,i, z,j)\bigr)+\langle
b(x,i,z,j), D f(x,i, z,j)\rangle,
\end{equation} where in the above, $Df(x,i,z,j)$ represents the gradient of $f$ with respect to the variables $x$ and $z$, that is,  $Df(x,i,z,j) = (D_{x}f(x,i,z,j), D_{z}f(x,i,z,j))'$. Likewise,  $D^{2}f(x,i, z,j)$ denotes the Hessian of $f$ with respect to the variables $x$ and $z$.
 % which is the basic coupling of the diffusion part in the generator ${\LL}_{i}$ defined in \eqref{eq-Lk-operator-defn} (refer to \cite{ChenLi-89}).
Let us also define for $f(x,i, z,j) \in C^{2}_{c}(\R^{d}  \times\ss \times \R^{d} \times \ss)$,
\begin{align}\label{eq-Omega-j-defn}
\nonumber   & \displaystyle\wdt {\Omega}_{\text{jump}}  f(x,i, z,j)
\\ & \  = \int_{U}
[f(x+c(x,i,u),i, z+c(z,j,u), j)-f(x,i,z,j)  \\
\nonumber & \qquad -  \langle D_{x } f(x,i,z,j),   c(x,i,u)\rangle \one_{\{ u \in U_{0}\}} - \langle D_{z} f(x,i, z,j),   c(z,j,u)\rangle  \one_{\{ u \in U_{0}\}}] \Pi(\d u), \end{align}
 which is a coupling of the jump
part in the generator ${\LL}_{i}$ defined in \eqref{eq-Lk-operator-defn}.
Next we define  the basic coupling (see, e.g., p. 11 on  \cite{Chen04}) for the $q$-matrices $Q(x) $ and $Q(y)$. For any $f(x,i, z,j) \in C_{c}^{2} (\R^{d} \times \ss\times \R^{d} \times \ss )$, we define
\begin{equation}
\label{eq-Q(x)-coupling} \begin{aligned}
\wdt \Omega_{\text{switching}}  f(x,i,z,j): =
 &\sum_{l\in\ss}[q_{il}(x)-q_{jl}(z)]^+(   f(x,l, z, j)-  f(x,i, z, j))\\
&+\sum_{l\in\ss}[q_{jl}(z) -q_{il}(x)]^+( f(x,i, z, l)- f(x,i, z, j) )\\
&+\sum_{l\in\ss}[ q_{il}(x) \wedge q_{jl}(z) ](  f(x,l, z, l)-f(x,i, z, j)).
\end{aligned}\end{equation}
%we construct the coupling for the switching part $Q(x) f(x,i)$ of \eqref{eq-operator}. To this end, recall that  $Q(x) f(x,i)$ can be rewritten  as $Q(x) f(x,i) % = \sum_{l\in \ss } q_{il}(x) [ f(x,l) - f(x,i)] = \int_{[0,\infty)}[ f(x, i + h(x,i,u) ) - f(x,i)] \m(\d u),$ where the function $h$ is defined in  \eqref{eq-h-fn} and $\m (\cdot)$ is the Lebesgue measure on $\R$. Then we for any $f: \R^{d}\times \ss  \times\R^{d} \times \ss \mapsto \R$, we define \begin{equation} \label{eq-Q(x)-coupling}\wdt \Omega_{\text{switching}}  f(x,i,z,j): =  \int_{[0,\infty)} [f(x, i+ h(x,i,u), z, j+ h(z,j,u) ) - f(x,i,z,j)] \m(\d u). \end{equation}
It is easy to verify that $\wdt Q (x,z) $ defined in \eqref{eq-Q(x)-coupling} is a coupling to $Q(x)$ defined in \eqref{eq-Qx-operator}.

 Finally, the coupling operator to $\A$ of \eqref{eq-operator} can be written as
\begin{equation}
\label{eq-A-coupling-operator} \begin{aligned}
\wdt{ \mathcal{A}} & f(x,i,z,j)  : =\! \bigl[ \wdt \Omega_{\text{diffusion}}   + \wdt \Omega_{\text{jump}}   + \wdt \Omega_{\text{switching}} \bigr] f(x,i,z,j).
\end{aligned}\end{equation}
In fact, we can verify directly that for any $f(x,i,z,j) = g(x,i) \in C^{2}_{c} (\R^{d}\times \ss)$, we have $\wdt \A f(x,i,z,j) = \A g(x,i)$.

As in the proof of Proposition 5.2.13 in \cite{Karatzas-S}, we can construct a sequence $\{ \psi_{n}(r)\}_{n=1}^{\infty}$ of twice continuously differentiable functions satisfying
  $\abs{\psi_{n}'(r)} \le 1$ and
  $\lim_{n\to\infty} \psi_{n}(r) = |r|$ for $r\in \R$, and $0\le \psi_{n}''(r) \le 2 n^{-1}H^{-1}r^{-2} $ for $r\not=0$, where $H $ is as in \eqref{Lip-condition}.
Furthermore, for every $r\in \R$, the sequence $\{\psi_{n}(r) \}_{n=1}^{\infty}$ is nondecreasing.
 %  Let us also define $f(x):= |x|$ for all $x\in \R^{d}$.
 \begin{Lemma}
 For each $n \in \mathbb N$, let the function $\psi_{n}$ be defined as above and further define the function   $$f_{n}(x,k, z,l): = \psi_{n}(|x-z|) + \one_{\{ k \neq l\}},  \ \  (x,k, z, l) \in \R^{d}\times \ss \times \R^{d}\times \ss.$$ Then for all $(x,k,z,k) \in \R^{d}\times \ss \times \R^{d}\times \ss$ with $x\neq z$, we have
 \begin{equation}
\label{eq-A-couple-est}
\wdt \A f_{n}(x,k,z,k) \le  \frac{1}{n} + C |x-z|,
\end{equation} in which % $C = C(H,  \nu(U_{0}^{c}))$
$C= C(H)$ is a positive constant.
\end{Lemma}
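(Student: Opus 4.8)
The plan is to bound each of the three building blocks of $\widetilde{\mathcal A}$ in \eqref{eq-A-coupling-operator} separately, evaluated at $(x,k,z,k)$ with $x\neq z$, and then add. Throughout I will write $w:=x-z$, $r:=|w|>0$, $\nu:=w/r$, and I will exploit the split $f_{n}(x,i,z,j)=\psi_{n}(|x-z|)+\one_{\{i\neq j\}}$: the indicator vanishes at $(x,k,z,k)$, and since neither the diffusion operator \eqref{eq-Omega-d-defn} nor the jump operator \eqref{eq-Omega-j-defn} changes the discrete coordinates, the indicator contributes nothing to those two pieces. Only $\widetilde\Omega_{\text{switching}}$ will feel it.

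For the diffusion part I would reduce $\widetilde\Omega_{\text{diffusion}}f_{n}$ to the action of the second-order operator on $G(x,z):=\psi_{n}(|x-z|)$. A short computation gives $DG=\psi_{n}'(r)(\nu,-\nu)'$ and the Hessian in block form $\begin{pmatrix} M & -M\\ -M & M\end{pmatrix}$ with $M=\psi_{n}''(r)\nu\nu'+\tfrac{\psi_{n}'(r)}{r}(I-\nu\nu')$. The key algebraic step is that the cross terms in $a(x,k,z,k)$ assemble the blocks into $a(x,k)+a(z,k)-\sigma(x,k)\sigma(z,k)'-\sigma(z,k)\sigma(x,k)'=\Sigma\Sigma'$, where $\Sigma:=\sigma(x,k)-\sigma(z,k)$, so that $\tfrac12\tr(a(x,k,z,k)D^{2}G)=\tfrac12\psi_{n}''(r)|\Sigma'\nu|^{2}+\tfrac{\psi_{n}'(r)}{2r}\bigl(|\Sigma|^{2}-|\Sigma'\nu|^{2}\bigr)$. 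Using $0\le\psi_{n}''(r)\le 2n^{-1}H^{-1}r^{-2}$, the inequality $|\Sigma'\nu|^{2}\le|\Sigma|^{2}$, and the Lipschitz bound $|\Sigma|^{2}\le Hr^{2}$ from \eqref{Lip-condition}, the first term is $\le 1/n$ and the second is $\le \tfrac H2 r$; the drift term $\psi_{n}'(r)\langle\nu,b(x,k)-b(z,k)\rangle$ is $\le\sqrt H\,r$ because $|\psi_{n}'|\le1$. This yields $\widetilde\Omega_{\text{diffusion}}f_{n}(x,k,z,k)\le \tfrac1n+C_{1}r$.

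For the jump part, at $i=j=k$ only $\psi_{n}$ of the displaced difference $w+\Delta c$ enters, with $\Delta c:=c(x,k,u)-c(z,k,u)$. On $U\setminus U_{0}$ there is no compensator, and the $1$-Lipschitz property of $\psi_{n}$ gives $|\psi_{n}(|w+\Delta c|)-\psi_{n}(|w|)|\le|\Delta c|$; on $U_{0}$ the compensating term $\psi_{n}'(r)\langle\nu,\Delta c\rangle$ is also $\le|\Delta c|$, so the full integrand is bounded by $2|\Delta c|$. Integrating and applying \eqref{eq-c0-nu-lip} gives $\widetilde\Omega_{\text{jump}}f_{n}(x,k,z,k)\le 3Hr$. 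I note that no second-order cancellation is needed here, and this is exactly why the $1/n$ term originates solely from the diffusion. For the switching part, the variables $x,z$ are frozen in \eqref{eq-Q(x)-coupling}, so the $\psi_{n}(|x-z|)$ contributions cancel in every difference and only the indicator remains. The diagonal sum vanishes since $f_{n}(x,l,z,l)-f_{n}(x,k,z,k)=\one_{\{l\neq l\}}=0$, while the first two sums collapse (the $l=k$ terms drop) to $\sum_{l\neq k}\bigl([q_{kl}(x)-q_{kl}(z)]^{+}+[q_{kl}(z)-q_{kl}(x)]^{+}\bigr)=\sum_{l\neq k}|q_{kl}(x)-q_{kl}(z)|\le Hr$ by \eqref{eq-q-Lip-new}.

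Adding the three estimates produces \eqref{eq-A-couple-est} with some $C=C(H)$. I expect the only delicate step to be the diffusion computation: recognizing the $\Sigma\Sigma'$ structure produced by the coupled diffusion matrix and decomposing the Hessian of $\psi_{n}(|\cdot|)$ into its radial eigenvalue $\psi_{n}''(r)$ and tangential eigenvalue $\tfrac{\psi_{n}'(r)}{r}$, so that the bound on $\psi_{n}''$ contributes precisely the $1/n$ while the tangential remainder stays of order $r$. The jump and switching parts are then routine Lipschitz estimates via Assumption \ref{qkappa}.
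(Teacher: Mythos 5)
Your proposal is correct and follows essentially the same route as the paper: the same three-way split of $\wdt\A$, the same radial/tangential decomposition of the diffusion term (your explicit $\Sigma\Sigma'$ computation is exactly the formula the paper imports from the proof of Theorem 3.1 in \cite{ChenLi-89}, with your $|\Sigma'\nu|^2$ and $|\Sigma|^2$ playing the roles of $\overline A$ and $\tr A$), the same $1$-Lipschitz jump estimate via \eqref{eq-c0-nu-lip}, and the same collapse of the basic coupling to $\sum_{l\neq k}|q_{kl}(x)-q_{kl}(z)|\le H|x-z|$ via \eqref{eq-q-Lip-new}. The only cosmetic difference is that you derive the diffusion identity from scratch rather than citing Chen--Li, and your constant bookkeeping (e.g.\ $3H$ for the jump part, $\sqrt H$ for the drift) matches the paper's up to the generic $C=C(H)$.
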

\begin{proof}
 For any $x,z \in \R^{d}$ and $k,l\in \ss$, set % $a(x,k) : = \sigma(x,k) \sigma(x,k)'$,
\begin{align*}
A(x,k, z,l)& =a(x,k)+a(z,l)-2 \sigma (x,k) \sigma (z,l)',\\
\wdh {B}(x,k,z,l)& =\langle x-z, b(x,k)-b(z,l) \rangle, \end{align*}
 and
\begin{displaymath}
\overline{A}(x,k, z,l)   = \langle x-z, A(x,k, z,l) (x-z) \rangle / |x-z|^{2}.
\end{displaymath}
Then as in the proof of Theorem 3.1 in \cite{ChenLi-89}, we can verify  that
\begin{align*}
2\, \wdt \Omega_{\text{diffusion}}f_{n}(x,k, z,l)  & = \psi_{n}''(|x-z|)\overline A (x,k, z,l)  \\ & \qquad + \frac{ \psi_{n}'(|x-z|) }{|x-z|} \big[ \tr(A(x,k, z,l))-\overline A(x,k, z,l) + 2 \wdh B(x,k, z,l)\big].
\end{align*}
Note that
$\tr (  A(x,k,z,k))=\|\sigma (x,k)-\sigma (z,k)\|^{2}$ and hence we obtain from    \eqref{Lip-condition}  that  $$\tr A(x,k,z,k)  + \wdh B(x,k,z,k) \le H |x-z|^{2}.$$   On the other hand, using \eqref{Lip-condition} again,
\begin{displaymath}
\overline A(x,k, z,k) = \frac{\langle x-z, (\sigma(x,k)-\sigma(z,k)) (\sigma(x,k)-\sigma(z,k))^{T} (x-z)\rangle}{|x-z|^{2}} \le H |x-z|^{2}.
\end{displaymath}
Thus it follows that
\begin{equation}
\label{eq1-Omega-d}\begin{aligned}
\wdt \Omega_{\text{diffusion}}  f_{n}(x,k,z,k) &   \le \frac{1}{2} \psi_{n}''(|x-z|) H  |x-z|^{2} + \frac{3}{2} \psi_{n}'(|x-z|) H |x-z| \\
          & \le  \frac{1}{n} +  \frac{3}{2}   H |x-z|,
\end{aligned}\end{equation} where the last inequality follows from the construction of the function $\psi_{n}$.

Next we show that  for some positive constant $K$, we have
\begin{equation}
\label{eq-jump-est}
\wdt \Omega_{\text{jump}} f_{n}(x,k,z,k)  \le K     |x-z|.   %\bigg( \frac{1}{n}  + |x-z|\biggr).
\end{equation}
In fact, since $|\psi_{n}'| \le 1$,  we can use % the H\"older inequality and
 % \eqref{Lip-condition}
 \eqref{eq-c0-nu-lip} to compute
\begin{align*}
\int_{U_{0}^{c}} &  [\psi_{n}(|x+ c(x,k,u) - z - c(z,k,u)|) - \psi_{n}(|x-z|)] \Pi(\d u)  \\
 & \le  \int_{U_{0}^{c}} |  c(x,k,u)  - c(z,k,u)| \Pi(\d u) % \\
 %  & \le  (\Pi(U_{0}^{c}))^{\frac{1}{2}} \biggl( \int_{U_{0}^{c}}   |  c(x,k,u)  - c(z,k,u)|^{2} \Pi(\d u) \biggr)^{\frac{1}{2}} \\
   \le H |x-z|.
\end{align*}  % where $K= K (H, \Pi(U_{0}^{c}))$ is a positive constant, depending only on $\Pi(U_{0}^{c})$ and the Lipschitz constant $H$ in  \eqref{Lip-condition}.
On the other hand, note that $D_{z} \psi_{n}(|x-z|) =- D_{x} \psi_{n}(|x-z|)$. Thus it follows that
 \begin{align*}
& \int_{U_{0}}   [\psi_{n}(|x+ c(x,k,u) - z - c(z,k,u)|) - \psi_{n}(|x-z|)  \\ & \qquad - \lan D_{x} \psi_{n}(|x-z|), c(x,k,u)\ran - \lan D_{z} \psi_{n}(|x-z|), c(z,k,u)\ran ] \Pi(\d u)    \\
 & \ \ = \int_{U_{0}}  [\psi_{n}(|x- z  + c(x,k,u) - c(z,k,u)|) - \psi_{n}(|x-z|)   \\ & \qquad \qquad
 - \lan D_{x} \psi_{n}(|x-z|), c(x,k,u)- c(z,k,u) \ran   ] \Pi(\d u) \\
 & \ \ \le 2  \int_{U_{0}} \abs{c(x,k,u)- c(z,k,u)} \Pi(\d u) \\
 & \ \ \le 2 H |x-z|,
\end{align*} where we used \eqref{eq-c0-nu-lip} to obtain the last inequality. Combining the above two displayed equations  gives \eqref{eq-jump-est}.

Finally we estimate $\wdt \Omega_{\text{switching}} f_{n}(x,k,z,l)$.  Clearly we have $\wdt \Omega_{\text{switching}} f_{n}(x,k,z,l) \le 0$ when $k \neq l$. When $k =l$, we have  from \eqref{eq-q-Lip-new} that
\begin{align} \label{eq1-switching-est}
\nonumber \wdt \Omega_{\text{switching}} f_{n}(x,k,z,k) & = \sum_{ i\in \ss} [q_{ki}(x)-q_{ki}(z)]^+( \one_{\{i \neq k \}} - \one_{\{ k\neq k\}})\\
\nonumber&\qquad +\sum_{i \in\ss}[q_{ki}(z)  -q_{ki}(x)]^+(  \one_{\{i \neq k \}} - \one_{\{ k\neq k\}} ) + 0 \\
\nonumber & \le \sum_{i \in\ss, i \neq k} \abs{q_{ki}(x)-q_{ki}(z)} \\ &
 \le H |x-z|.
\end{align}

Now plug   \eqref{eq1-Omega-d}, \eqref{eq-jump-est}, and \eqref{eq1-switching-est} into \eqref{eq-A-coupling-operator} yields \eqref{eq-A-couple-est}. This completes the proof. \end{proof}

\begin{proof}[Proof of Theorem  \ref{thm-Feller}] Denote by  $\{ P(t,x,k, A): t \ge 0, (x,k) \in \R^{d}\times \ss, A \in \B(\R^{d}\times \ss)\}$ the transition probability family of the process $(X,\La)$.
Since $\ss$ has a discrete topology, we need only to show that for each $t\ge 0$ and $k \in \ss$, $P(t,x,k,\cdot) $ converges weakly to $P(t,z,k,\cdot) $ as $x-z\to 0$. By virtue of Theorem 5.6 in \cite{Chen04}, it suffices to prove that \begin{equation}
\label{eq:Wasserstein-convergence}
W(P(t,x,k,\cdot), P(t,z,k,\cdot) ) \to 0 \text{ as } x\to z,
\end{equation}
where $W(\cdot, \cdot)$ denotes the Wasserstein metric between two probability measures.

 Let
$(\wdt X(t), \wdt \La(t),   \wdt Z(t), \wdt \Xi(t))$ denote the coupling process
corresponding to the coupling operator  $\wdt \A$ defined in \eqref{eq-A-coupling-operator}.  Assume that  $(\wdt X(0), \wdt \La(0),   \wdt Z(0), \wdt \Xi(0)) = (x,k,z,k)\in \R^{d}\times\ss \times \R^{d}\times \ss$ with $x\neq z$.
 %  Also let   $\P_{k}$ denote the distribution of $(\wdt {X}^{(k)},\wdt {Z}^{(k)})$ and   $\E_{k}$   the corresponding expectation with a slight  abuse of notation.
 Define $\zeta: = \inf\{ t \ge0: \wdt \La(t) \neq \wdt \Xi(t)\}$. Note that $\P\{ \zeta > 0\} =1$. In addition,
similarly to the proof of Theorem 2.3 in
\cite{ChenLi-89}, set
\begin{align*}
 %S_N &:=\inf \{t \ge 0: |\wdt {X}^{(k)}(t)-\wdt {Z}^{(k)}(t)| >N \}, \quad N>1,\\
T_R &: = \inf \{t \ge 0: |\wdt {X}(t)|^2 +|\wdt {Z}(t)|^2 + \wdt \La(t) + \wdt \Xi(t) > R \}.
% T &: =T_R \wedge S_h.
\end{align*}

Now we apply It\^o's formula to the process $f_{n}(\wdt X(\cdot), \wdt \La(\cdot),   \wdt Z(\cdot), \wdt \Xi(\cdot) )$   to obtain
\begin{equation}\label{eq-fn-mean}\begin{aligned}
\E&\bigl [ f_{n}(\wdt X(t\wedge T_{R} \wedge \zeta), \wdt \La(t\wedge T_{R} \wedge \zeta),   \wdt Z(t\wedge T_{R} \wedge \zeta), \wdt \Xi(t\wedge T_{R} \wedge \zeta) ) \bigr]\\ &  = f_{n}(x,k,z,k) + \E \biggl[ \int_{0}^{t\wedge T_{R}\wedge \zeta} \wdt \A f_{n} (\wdt X(s), \wdt \La(s),   \wdt Z(s), \wdt \Xi(s)  ) \d s  \biggr]\\
  & \le \psi_{n}( |x - z|)  + \frac{t}{n} + C \E \biggl[ \int_{0}^{t\wedge T_{R}\wedge \zeta} |\wdt X (s) - \wdt Z (s)| \d s \biggr],
\end{aligned}\end{equation}
where the last step follows from the observation  that $\wdt \La(s) = \wdt \Xi(s)$ for all $s\in [0, t\wedge T_{R}\wedge \zeta)$ and the estimate in  \eqref{eq-A-couple-est}.  Since $f_{n}(x,k,z,l) =\psi_{n}(|x-z|) + \one_{\{ k \neq l\}} \ge \psi_{n}(|x-z|) $, we have from \eqref{eq-fn-mean} that
\begin{align*}
\E&\bigl [ \psi_{n}(|\wdt X(t\wedge T_{R} \wedge \zeta)-   \wdt Z(t\wedge T_{R} \wedge \zeta)|  ) \bigr] \\ & \le \psi_{n}( |x - z|)  + \frac{t}{n} + C \E \biggl[ \int_{0}^{t\wedge T_{R}\wedge \zeta} |\wdt X (s) - \wdt Z (s)| \d s \biggr].
\end{align*}
Recall that  $\psi_{n}(|x|) \uparrow |x|$ as $n \to \infty$. Therefore, passing to the limit as $n \to \infty$ on both sides of the above equation, it follows from the Monotone Convergence Theorem that
\begin{align*}
\E & \bigl[|\wdt X(t\wedge T_{R} \wedge \zeta)-   \wdt Z(t\wedge T_{R} \wedge \zeta) |\bigr] \\ &  \le |x-z| +   C  \E \biggl[ \int_{0}^{t\wedge T_{R} \wedge \zeta}  |\wdt X (s) - \wdt Z (s)| \d s \biggr] \\
 & = |x-z| + C  \E \biggl[ \int_{0}^{t}  |\wdt X (s\wedge T_{R} \wedge \zeta) - \wdt Z (s\wedge T_{R} \wedge \zeta)|   \d s \biggr]  .
\end{align*}
 Then an application of   Gronwall's inequality leads to
$$\E \bigl[\bigl| \wdt X(t\wedge T_{R} \wedge \zeta)-   \wdt Z(t\wedge T_{R} \wedge \zeta)  \bigr| \bigr] \le |x-z| \exp (C t).$$
%Finally,
Now passing to the limit as   $R \uparrow \infty$, %$N \uparrow \infty$,
we conclude that
\begin{equation}\label{eq:diff-before-switching-mean} \E \bigl[\bigl| \wdt X(t \wedge \zeta)-   \wdt Z(t \wedge \zeta)  \bigr| \bigr] \le |x-z| \exp (C t).
 \end{equation}

 Observe that $\zeta \le t$ if and only if $\wdt \La(t \wedge \zeta) \neq \wdt\Xi(t\wedge \zeta).$ Put $f(x,k,z,l): = \one_{\{ k\neq l\}}$ and apply It\^o's formula to the process $f(\wdt X(t),\wdt \La(t),\wdt Z(t),\wdt\Xi(t))$:
 \begin{align}\label{eq:zeta<t-prob}
\nonumber \P\{\zeta \le t \}& = \E[\one_{\{ \wdt \La(t \wedge \zeta) \neq \wdt\Xi(t\wedge \zeta)\}}] = \E[f(\wdt X(t\wedge \zeta),\wdt \La(t\wedge \zeta),\wdt Z(t\wedge \zeta),\wdt\Xi(t\wedge \zeta))] \\
\nonumber & = \E\biggl[\int_{0}^{t\wedge \zeta} \wdt \A f(\wdt X(s),\wdt \La(s),\wdt Z(s),\wdt\Xi(s)) \d s \biggr]\\
 \nonumber& \le H \E  \biggl[\int_{0}^{t\wedge \zeta} |\wdt X(s) - \wdt Z(s)| \d s\biggr]   \\
 \nonumber& =  H  \int_{0}^{t}\E[ |\wdt X(s\wedge \zeta) - \wdt Z(s\wedge \zeta)| ]\d s \\
 & \le K |x-z| e^{Ct},
\end{align}
where $K= K(H, \Pi(U_{0}^{c}))$ is a positive constant,  the first inequality above follows from  \eqref{eq1-switching-est} and the last step follows from \eqref{eq:diff-before-switching-mean}.

The standard argument using Assumptions \ref{I1} and \ref{qH} reveals that $\E[\sup_{0\le s \le t} |\wdt X(s)|^{2} + | \wdt Z(s)|^{2}] \le K(1+|x|^{2} + |z|^{2})$, where $K= K(t, H, \Pi(U_{0}^{c})) $ is a positive constant.
 Then it follows from the H\"older inequality and \eqref{eq:zeta<t-prob} that
 \begin{equation}\label{eq:diff-after-switching-mean}
\E[|\wdt X(t) - \wdt Z(t) - \wdt X(t\wedge \zeta) + \wdt Z(t\wedge \zeta)| \one_{\{\zeta \le t \}}] \le K  (1+|x|^{2} + |z|^{2})^{\frac12}|x-z|^{\frac12},
\end{equation} where in the above, $K$ is a positive constant depending only on $t,H,$ and $\Pi(U_{0}^{c})$. Finally, we combine \eqref{eq:diff-before-switching-mean} and \eqref{eq:diff-after-switching-mean} to obtain
\begin{align}
\label{eq-diff-mean}
\nonumber\E& [|\wdt X(t) - \wdt Z(t)|] \\ &  \le \E[\bigl| \wdt X(t \wedge \zeta)-   \wdt Z(t \wedge \zeta)  \bigr| ] + \E[|\wdt X(t) - \wdt Z(t) - \wdt X(t\wedge \zeta) + \wdt Z(t\wedge \zeta)| \one_{\{\zeta \le t \}}]  \\
\nonumber  & \le K |x-z| +K  (1+|x|^{2} + |z|^{2})^{\frac12}|x-z|^{\frac12}.
 \end{align}
 Observe that if $\wdt \La(t) \neq \wdt\Xi(t)$ then $\zeta \le t$. Thus thanks to  \eqref{eq:zeta<t-prob}, we also have
\begin{equation}
\label{eq:La-Xi-difference prob}
 \E[\one_{\{\wdt \La(t) \neq \wdt\Xi(t) \}}] \le \P\{\zeta \le t \} \le  K |x-z| e^{Ct}.
\end{equation}

Now let $f\in C_{b}(\R^{d}\times \ss)$, then we have
\begin{equation}\label{eq:f-difference-mean}\begin{aligned}
\E& \big[|f(\wdt X(t), \wdt \La(t)) - f(\wdt Z(t), \wdt \Xi(t)) |\big]  \\
  & \le \E\big[|f(\wdt X(t), \wdt \La(t)) - f(\wdt Z(t), \wdt \La(t)) |\big]  + \E\big[|f(\wdt Z(t), \wdt \La(t)) - f(\wdt Z(t), \wdt \Xi(t)) |\big].  \end{aligned}
\end{equation} Both terms on the right-hand side of \eqref{eq:f-difference-mean} converge to 0 as $x\to z$ thanks to \eqref{eq-diff-mean}, \eqref{eq:La-Xi-difference prob}, the continuity of $f$,  and the bounded convergence theorem. This implies \eqref{eq:Wasserstein-convergence} and therefore completes the proof.
 \end{proof}

%%%%%%%%%%%%%%%%%%%%%%%%%%%%%%%%%%%%%%%%%%%%%%%%%%%%%%%%%%%%

\section{Strong Feller Property: Jump Diffusions}\label{sect-StFP-jump-diffusion}

In order to prove the strong Feller property, we further make the following assumption:

\begin{Assumption}   \label{StFP1}{\rm Assume that
the characteristic measure $\Pi(\cdot)$ is finite (i.e.,
$U_{0}\equiv \emptyset$) and that for each $k \in \ss$, the
diffusion $X^{(k),0}$ satisfying
\begin{equation}\label{Xk0} \d X^{(k),0}(t)= b(X^{(k),0}(t),k)\d t+\sg
(X^{(k),0}(t),k)\d B(t),\end{equation} has the strong Feller
property and has a transition probability density
%%$\wdt{p}^{(k),0}(t, x, y)$
with respect to the Lebesgue measure.}\end{Assumption}

\begin{Remark}   \label{StFP2}{\rm
For a given $k \in \ss$, a %generally
 sufficient condition for
$X^{(k),0}$ to have the strong Feller property and to have a
transition probability density is that the Fisk-Stratonovich type
generator of $X^{(k),0}$ is hypoelliptic (see, for example,
\cite{IchiK-74, Klie-87} for  details). In particular, if the
diffusion matrix of $X^{(k),0}$ is uniformly positive, then
the diffusion process $X^{(k),0}$ must have the strong Feller
property and must have a transition probability density (see the
last paragraph of Section 2 in \cite{Klie-83} or Section 8 of Chapter
V in \cite{IkedaW-89}).}\end{Remark}

For later use, we now introduce a family of jump diffusions under Assumption~\ref{StFP1}.
For each $k \in \ss$, let the single jump diffusion $X^{(k)}$
satisfy the following stochastic differential-integral equation:
\begin{equation}\label{(EU1)}
 \d X^{(k)}(t)  =
b(X^{(k)}(t),k)\d t+\sigma (X^{(k)}(t),k)\d B(t) 
+\int_{U} c(X^{(k)}(t-),k,u)N(\d t,\d u).
\end{equation}

\begin{Lemma}   \label{StFP3}
Suppose that Assumption~\ref{StFP1} holds. For each
given $k \in \ss$, the jump-diffusion process $X^{(k)}$ has the
strong Feller property
%and has
with a transition probability density
%%$\wdt{p}^{(k)}(t,x,y)$
with respect to   the Lebesgue measure.
\end{Lemma}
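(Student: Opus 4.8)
The plan is to exploit the fact that, under Assumption~\ref{StFP1}, the characteristic measure is finite, so the jumps of $X^{(k)}$ form a finite-activity process. Write $\la := \Pi(U) < \infty$. Since $N$ is independent of $B$, the successive jump times $0 < T_{1} < T_{2} < \cdots$ of $X^{(k)}$ are the arrival times of a rate-$\la$ Poisson process independent of $B$, the jump marks are i.i.d.\ with law $\Pi(\cdot)/\la$, and between consecutive jumps $X^{(k)}$ evolves exactly as the pure diffusion $X^{(k),0}$ of \eqref{Xk0}. Denote by $P_{t}^{(k)}$ the transition semigroup of $X^{(k)}$, by $P_{t}^{0}$ that of $X^{(k),0}$, and recall that by Assumption~\ref{StFP1} the latter is strong Feller and admits a density $p_{t}^{0}(x,y)$ with respect to Lebesgue measure.

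First I would prove the strong Feller property by a renewal (Duhamel) decomposition according to the first jump time $T_{1}$. On $\{T_{1} > t\}$, which has probability $e^{-\la t}$ and is independent of the diffusion path, $X^{(k)}$ coincides with $X^{(k),0}$; on $\{T_{1} = s \le t\}$ the strong Markov property at $T_{1}$ together with the continuity of the diffusion up to the jump gives, for bounded measurable $f$,
\[
 P_{t}^{(k)} f(x) = e^{-\la t} P_{t}^{0} f(x) + \int_{0}^{t} e^{-\la s}\bigl(P_{s}^{0} g_{s}\bigr)(x)\, \d s, \qquad g_{s}(y) := \int_{U} P_{t-s}^{(k)} f\bigl(y + c(y,k,u)\bigr)\, \Pi(\d u),
\]
where the rate $\la$ of $T_{1}$ and the normalization $\la^{-1}$ of the mark law cancel. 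Each $g_{s}$ is bounded by $\la \norm{f}_{\infty}$ and measurable, so the strong Feller property of $P_{s}^{0}$ makes $x \mapsto (P_{s}^{0} g_{s})(x)$ continuous for every $s$; since the integrand is bounded uniformly in $s$, dominated convergence yields continuity of the integral, hence of $P_{t}^{(k)} f$, in $x$. This is exactly the strong Feller property.

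For the transition density I would refine the decomposition according to the total number of jumps $N(t)$ in $[0,t]$, which is Poisson distributed with mean $\la t$ and finite a.s. The event $\{N(t) = 0\}$ contributes the absolutely continuous part $e^{-\la t} p_{t}^{0}(x,y)\, \d y$. On $\{N(t) \ge 1\}$, let $\tau$ be the last jump time before $t$ and $W$ the position of $X^{(k)}$ immediately after that jump; since the Poisson jump times are a.s.\ non-atomic, $t - \tau > 0$ a.s., and conditionally on $(\tau, W)$ the strong Markov property shows that $X^{(k)}(t)$ is distributed as $X^{(k),0}$ run from $W$ over the strictly positive duration $t - \tau$, hence has density $p_{t-\tau}^{0}(W, \cdot)$. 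Integrating out $(\tau, W)$ then gives
\[
 p_{t}^{(k)}(x,y) = e^{-\la t} p_{t}^{0}(x,y) + \E_{x}\bigl[ p_{t-\tau}^{0}(W,y)\, \one_{\{N(t) \ge 1\}}\bigr],
\]
which is a transition density for $X^{(k)}$.

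The hard part will be the measurability and conditional-law bookkeeping rather than any analytic estimate: one must check that $g_{s}(y)$ is jointly measurable in $(s,y)$ so that Fubini and dominated convergence apply, and one must make rigorous the assertion that, conditionally on the position just after the last jump, the terminal law equals the pure-diffusion law over the positive remaining time $t - \tau$. Both are standard consequences of the interlacing construction of $X^{(k)}$ and the strong Markov property, and everything else reduces entirely to the strong Feller property and the existence of a transition density for $X^{(k),0}$, which are supplied by Assumption~\ref{StFP1}.
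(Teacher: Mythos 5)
Your proposal is correct, and its starting point coincides with the paper's: your Duhamel formula is exactly the kernel identity \eqref{(StFP2)}, obtained by conditioning on the first atom of the finite-activity Poisson measure, between whose atoms $X^{(k)}$ runs as the diffusion $X^{(k),0}$. Where you genuinely diverge is in how the conclusion is extracted. The paper iterates \eqref{(StFP2)} countably many times to produce the series \eqref{(StFP5)}--\eqref{(StFP9)}, whose $n$th term is dominated by $\bigl(t\Pi(U)\bigr)^{n-1}e^{-t\Pi(U)}/(n-1)!$; uniform convergence plus lower semicontinuity of each term in $x$ then yields the strong Feller property via Proposition 6.1.1 of \cite{MeynT-93}, and the density is read off the series term by term. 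You instead stop after a single iteration, treat $P^{(k)}_{t-s}f$ as a black-box bounded measurable function, let the \emph{outer} semigroup $P^{0}_{s}$ do all the smoothing, and conclude continuity of $P^{(k)}_{t}f$ directly by dominated convergence in $s$ --- shorter, and it delivers genuine continuity without the lower-semicontinuity detour or any series estimate. What the paper's longer route buys is the explicit expansion itself: Remark \ref{StFP3a} reads positivity of the density off the series term by term, and the same expansion pattern is reused verbatim in Lemma \ref{thm:GFP4} for the auxiliary switching process $(V,\psi)$, so the series is infrastructure for later sections rather than overhead. Two small repairs to your write-up: the appeal to the strong Markov property ``at the last jump time $\tau$'' is not literal, since $\tau$ is not a stopping time; the clean justification is to condition on the full jump skeleton $\bigl(N(t);T_{1},\dots,T_{N(t)};u_{1},\dots,u_{N(t)}\bigr)$, which is independent of $B$ --- precisely what the $n$-jump term \eqref{(StFP9)} encodes --- after which the evolution over $(T_{N(t)},t]$ is the diffusion started from the post-jump position; you flag this yourself and the fix is routine. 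Likewise, your density formula requires a jointly measurable version of $(s,w,y)\mapsto p^{0}_{s}(w,y)$; this exists but can be sidestepped entirely by checking directly from your decomposition that $P^{(k)}(t,x,A)=0$ whenever $A$ is Lebesgue-null, since every conditional contribution is of the form $P^{(k),0}(s,w,A)$ with $s>0$, and then invoking Radon--Nikodym.
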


\begin{proof}
For a given $k \in \ss$, let us denote by $P^{(k)}(t,x,A)$ the
transition probability for the process $X^{(k)}$, and by
$P^{(k),0}(t,x,A)$ the transition probability for the process
$X^{(k),0}$. Following the proofs of \cite[Theorem 14 in Chapter
I]{Skorohod-89} and \cite[Lemma 2.3]{LiDS-02} with some elementary
analysis, for any given $t>0$, $x\in \R^{d}$ and $A \in
\B(\R^{d})$, we obtain the relation
\begin{equation}\label{(StFP2)}
\begin{array}{ll}
 P^{(k)}(t,x,A)\ad =\exp\{-t\Pi (U)\}P^{(k),0}(t,x,A)\\
\aad \quad
+\int_{0}^{t}\int\int_{U}\exp\{-s_{1}\Pi(U)\}P^{(k),0}(s_{1},x,\d y_{1})
\Pi(du_{1})\d s_{1}\\
\aad \qquad \qquad \qquad \times
P^{(k)}(t-s_{1},y_{1}+c(y_{1},k,u_{1}),A).
\end{array}
\end{equation} From this we have
\begin{equation}\label{(StFP3)}
\begin{array}{ll}
\ad P^{(k)}(t-s_{1},y_{1}+c(y_{1},k,u_{1}),A)\\
\aad \quad
=\exp\{-(t-s_{1})\Pi(U)\}P^{(k),0}(t-s_{1},y_{1}+c(y_{1},k,u_{1}),A)\\
\aad \qquad +\int_{0}^{t-s_{1}}\int\int_{U}\exp\{-s_{2}\Pi(U)\}
P^{(k),0}(s_{2},y_{1}+c(y_{1},k,u_{1}),\d y_{2})\\
\aad \qquad \qquad \qquad \qquad \times \Pi(\d u_{2})\d s_{2}
P^{(k)}(t-s_{1}-s_{2},y_{2}+c(y_{2},k,u_{2}),A).
\end{array}
\end{equation} Using (\ref{(StFP2)}) again we further have
\begin{equation}\label{(StFP4)}
\begin{array}{ll}
\ad P^{(k)}(t-s_{1}-s_{2},y_{2}+c(y_{2},k,u_{2}),A)\\
\aad \quad
=\exp\{-(t-s_{1}-s_{2})\Pi(U)\}P^{(k),0}(t-s_{1}-s_{2},y_{2}+c(y_{2},k,u_{2}),A)\\
\aad \qquad
+\int_{0}^{t-s_{1}-s_{2}}\int\int_{U}\exp\{-s_{3}\Pi(U)\}\\
\aad \qquad \qquad \times
P^{(k),0}(s_{3},y_{2}+c(y_{2},k,u_{2}),\d y_{3})
\Pi(\d u_{3})\d s_{3}\\
\aad \qquad \qquad \times
P^{(k)}(t-s_{1}-s_{2}-s_{3},y_{3}+c(y_{3},k,u_{3}),A).
\end{array}
\end{equation} Using (\ref{(StFP2)}) countably many times, we
conclude that for any given $t>0$, $x\in \R^{d}$ and $A \in
\B(\R^{d})$,
\begin{equation}\label{(StFP5)}
P^{(k)}(t,x,A)= \hbox{a series}.
\end{equation} For this series, from (\ref{(StFP2)})--(\ref{(StFP4)}) we
derive that the first term (in which the process has no jump on
$[0,t]$) is
\begin{equation}\label{(StFP6)}
\exp\{-t\Pi (U)\}P^{(k),0}(t,x,A),
\end{equation} the second term
(in which the process has just one jump on
$[0,t]$) is
\begin{equation}\label{(StFP7)}
\begin{array}{ll}
\ad \exp\{-t\Pi(U)\}\int_{0}^{t}\int\int_{U}
P^{(k),0}(s_{1},x,\d y_{1})
\Pi(\d u_{1})\d s_{1}\\
\aad \quad \times P^{(k),0}(t-s_{1},y_{1}+c(y_{1},k,u_{1}),A),
\end{array}
\end{equation} the third term (in which the process has just
two
jumps on
$[0,t]$) is
\begin{equation}\label{(StFP8)}
\begin{array}{ll}
\ad
\exp\{-t\Pi(U)\}\int_{0}^{t}\int\int_{U}\int_{0}^{t-s_{1}}\int\int_{U}
P^{(k),0}(s_{1},x,\d y_{1})
\Pi(\d u_{1})\d s_{1}\\
\aad \quad \times P^{(k),0}(s_{2},y_{1}+c(y_{1},k,u_{1}),\d y_{2})
\Pi(\d u_{2})\d s_{2}\\
\aad \quad \times P^{(k),0}(t-s_{1}-s_{2},y_{2}+c(y_{2},k,u_{2}),A),
\end{array}
\end{equation} and moreover, the general term (in which the process
has just $n$ jumps on
$[0,t]$) is
\begin{equation}\label{(StFP9)}
\begin{array}{ll}
\ad
\exp\{-t\Pi(U)\}\int_{0}^{t}\int\int_{U}\int_{0}^{t-s_{1}}\int\int_{U}
\cdots \int_{0}^{t-s_{1}-\cdots -s_{n-1}}\int\int_{U}\\
\aad \qquad \quad P^{(k),0}(s_{1},x,\d y_{1})
\Pi(\d u_{1})\d s_{1} \\
\aad \qquad \times P^{(k),0}(s_{2},y_{1}+c(y_{1},k,u_{1}),\d y_{2})
\Pi(\d u_{2})\d s_{2}\cdots \\
\aad \qquad \times
P^{(k),0}(s_{n},y_{n-1}+c(y_{n-1},k,u_{n-1}),\d y_{n})
\Pi(\d u_{n})\d s_{n}\\
\aad \qquad \times
P^{(k),0}(t-s_{1}-\cdots-s_{n},y_{n}+c(y_{n},k,u_{n}),A).
\end{array}
\end{equation} In general, it is easy to see that the $n$th term does not exceed
$$\frac{\bigl(t\Pi(U)\bigr)^{n-1}}{(n-1)!}\exp\{-t\Pi(U)\}.$$ Hence it follows that the series
in (\ref{(StFP5)}) converges uniformly with respect to $x$ over
$\R^{d}$. %%Analogously to the proof of Theorem~\ref{transfer}, we

It is easy to prove that for any given $t>0$ and $A \in
\B(\R^{d})$, each term of the series in (\ref{(StFP5)}) is lower
semicontinuous with respect to $x$ by the strong Feller property of
$X^{(k),0}$ (see Assumption~\ref{StFP1}). Therefore, it follows that
for any given $t>0$ and $A \in \B(\R^{d})$, $P^{(k)}(t,x,A)$
is also lower semicontinuous with respect to $x$. As a result, $X^{(k)}$
has the strong Feller property by  Proposition
6.1.1 in  \cite{MeynT-93}. Finally, from (\ref{(StFP5)}),
% we also see that
$X^{(k)}$ has a
transition probability density with respect to
the Lebesgue measure
since
%so does
$X^{(k),0}$ does so under Assumption~\ref{StFP1}. The proof is complete. \end{proof}

\begin{Remark}   \label{StFP3a}{\rm
From (\ref{(StFP5)}) we can also see that if transition probability
density of $\wdt{X}^{(k),0}$ is positive, so is that of
$\wdt{X}^{(k)}$.}\end{Remark}

\section{Strong Feller Property: Regime-Switching Jump Diffusions}\label{sect-StFP}

 In order to prove the strong Feller property for $(X,\La)$, we further make the following assumption.

 \begin{Assumption}\label{assumption-finite-range}
There exists a positive integer $\kappa$ such that $q_{kl}(x)=0$ for all $k,l \in \ss$ with $|k-l|\ge \kappa+1$.
\end{Assumption}

Now let us establish the strong Feller property for the regime-switching jump diffusion $(X,\La)$.

\begin{Theorem}   \label{thm:str-Feller}
Suppose that Assumptions \ref{I1},  \ref{qH}, \ref{qkappa}, \ref{StFP1}, and  \ref{assumption-finite-range}  hold. Then
$(X,\La)$ has the strong Feller property.
\end{Theorem}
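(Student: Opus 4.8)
The plan is to carry out the three-stage program announced in the introduction: use the strong Feller property of the single jump diffusions $X^{(k)}$ (Lemma \ref{StFP3}) to obtain the strong Feller property of an auxiliary process with \emph{position-independent} switching, and then transport it to $(X,\La)$ itself by means of a change of measure. Since $\ss$ carries the discrete topology, it suffices to prove that for each fixed $k\in\ss$ and each bounded measurable $f$, the map $x\mapsto \E_{x,k}[f(X(T),\La(T))]$ is continuous.

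First I would build the auxiliary process $(V,\psi)$ of \eqref{(GFP6)}--\eqref{(GFP7)}. By \eqref{q} in Assumption \ref{qH} together with the finite-range Assumption \ref{assumption-finite-range}, the individual switching rates obey $q_{kl}(x)\le q_k(x)\le H(k+1)$ uniformly in $x$ and vanish whenever $|k-l|>\kappa$. I would therefore fix a position-independent reference $q$-matrix $\bar Q=(\bar q_{kl})$ dominating $Q(x)$ of \eqref{eq-Qx-operator}, namely $\bar q_{kl}\ge q_{kl}(x)$ for all $x$ and $l\neq k$ (for example $\bar q_{kl}=H(k+1)$ for $0<|l-k|\le\kappa$ and $0$ otherwise), and let $\psi$ be the continuous-time Markov chain generated by $\bar Q$, which is non-explosive since its rates grow only linearly. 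Between consecutive switches of $\psi$, the component $V$ is run as the jump diffusion $X^{(\psi)}$ of \eqref{(EU1)}. Under the reference law $\wdt\P$ the chain $\psi$ is \emph{autonomous}, i.e.\ independent of the Brownian motion and the Poisson random measure. Conditioning on a realization of $\psi$ with $n$ switches at $0<\sigma_1<\cdots<\sigma_n\le T$ and states $k=j_0,\dots,j_n$, the process $V$ is the concatenation of the strong Feller jump diffusions $X^{(j_0)},\dots,X^{(j_n)}$, each possessing a transition density by Lemma \ref{StFP3}; composing these kernels and summing the resulting uniformly convergent series over $n$, exactly as in the expansion \eqref{(StFP5)}, shows that $(V,\psi)$ is strong Feller and admits a transition density.

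Second, I would introduce the Radon--Nikodym derivative $M_T$ of \eqref{(GFP8)}. Since only the switching intensities are modified, it takes the standard form
\[
M_T=\exp\set{\int_0^T\bigl(\bar q_{\psi_s}-q_{\psi_s}(V_s)\bigr)\d s}\,\prod_{i:\,\sigma_i\le T}\frac{q_{\psi_{\sigma_i-},\psi_{\sigma_i}}(V_{\sigma_i-})}{\bar q_{\psi_{\sigma_i-},\psi_{\sigma_i}}},
\]
with $\bar q_k=\sum_l\bar q_{kl}$; because $\bar Q$ dominates $Q(\cdot)$ this is a genuine mean-one density and $\E_{x,k}[f(X(T),\La(T))]=\wdt\E_{x,k}[M_Tf(V(T),\psi(T))]$. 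It then remains to show that $x\mapsto\wdt\E_{x,k}[M_Tf(V(T),\psi(T))]$ is continuous. Conditioning again on $\psi$ (with the fixed data $j_0,\dots,j_n$ and $\sigma_1,\dots,\sigma_n$), on each segment $[\sigma_{i-1},\sigma_i)$ the potential $\bar q_{j_{i-1}}-q_{j_{i-1}}(\cdot)$ is bounded, lying in $[0,\bar q_{j_{i-1}}]$, while the switch factors $q_{j_{i-1},j_i}(\cdot)/\bar q_{j_{i-1},j_i}\le 1$ may be absorbed into the downstream integrand. Hence the conditional expectation is a finite backward composition of Feynman--Kac--weighted transition kernels of the $X^{(j_i)}$. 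Since a bounded potential perturbs a strong Feller semigroup into a strong Feller one (via the Duhamel identity $T_sG=P_sG+\int_0^sP_r(\,V\,T_{s-r}G)\,\d r$ and bootstrapping on the perturbation series), each such kernel sends bounded measurable functions to continuous ones, so the composition is continuous in $x$. Integrating over the law of $\psi$ then yields the continuity of $x\mapsto\wdt\E_{x,k}[M_Tf(V(T),\psi(T))]$, i.e.\ the strong Feller property of $(X,\La)$.

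The hard part will be this last step. The weight $M_T$ depends on the \emph{whole path} of $V$, so the strong Feller property of $(V,\psi)$ cannot be applied directly; one must instead exploit the smoothing of the jump-diffusion densities segment by segment, with the Feynman--Kac potential carefully controlled. The most delicate technical point is justifying the interchange of the limit $x\to x_0$ with the integration over $\psi$: the natural $x$-free, $\G$-measurable bound $M_T\le\exp\set{\int_0^T\bar q_{\psi_s}\d s}$ need not be $\wdt\P$-integrable, because the linearly growing rates make $\psi$ (and hence $\int_0^T\bar q_{\psi_s}\d s$) grow rapidly. I expect this to be resolved by localizing in the $\La$-variable, stopping when $\psi$ first exceeds a level $m$ and letting $m\to\infty$, in the spirit of the exit-time arguments already used in Step~2 of the proof of Theorem \ref{prop-EU1}.
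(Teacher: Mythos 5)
Your route is sound in outline, but it is genuinely different from the paper's in two respects. First, the reference chain: you dominate $Q(x)$ by $\bar q_{kl}=H(k+1)$ for $0<|l-k|\le\kappa$, so your switch factors satisfy $q_{kl}(\cdot)/\bar q_{kl}\le 1$ while the exponential factor $\exp\bigl\{\int_0^T(\bar q_{\psi_s}-q_{\psi_s}(V_s))\,\d s\bigr\}$ is the unbounded part. The paper instead takes the \emph{constant-rate} matrix $\wdh Q$ of \eqref{eq-qkl} (diagonal entries $-2\kappa$, unit off-diagonal rates within distance $\kappa$); no domination is needed, only the support condition from Assumption \ref{assumption-finite-range}, and the roles are reversed: the exponential part of $M_T$ in \eqref{(GFP8)} is bounded by $e^{2\kappa t}$ since $q_{\psi}\ge 0$, and the unboundedness sits in the product $\prod_i q_{\psi(\upsilon_i)\psi(\upsilon_{i+1})}\bigl(V(\upsilon_{i+1})\bigr)$, whose integrability and $L^1$-continuity in the initial point are imported from \cite{Xi-09} as Lemmas \ref{4.3} and \ref{4.4}. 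Second, the final continuity step: you condition on the skeleton of $\psi$ and prove continuity of each Feynman--Kac composition (bounded potential per segment, Duhamel identity), which in effect re-derives the killed-kernel series the paper only writes down later, in \eqref{(FP22)}, for irreducibility; the paper never disintegrates over the skeleton at this stage --- it proves that $(V,\psi)$ is strong Feller with a transition density (Lemma \ref{thm:GFP4}), upgrades this via Lusin and Dunford--Pettis to convergence in probability of $f(V^{(x,k)}(t),\psi^{(k)}(t))$ (Lemmas \ref{lem:GFP5} and \ref{lem:GFP6}, resting on the synchronous-coupling estimate of Lemma \ref{FP2}), and concludes with the three-term estimate \eqref{(4.26)}. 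What your approach buys is self-containedness: you bypass Lemmas \ref{FP2}, \ref{lem:GFP5}, \ref{lem:GFP6} and the two Radon--Nikodym lemmas quoted from \cite{Xi-09}, at the price of carrying the unbounded exponential yourself. What the paper's choice buys is that the only delicate object is isolated inside $M_T$ and handled once, by citation.

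Two points in your sketch do need to be executed rather than asserted. (i) ``Mean-one because $\bar Q$ dominates $Q(\cdot)$'' is too quick: with the unbounded exponential factor exceeding $1$, the density process is a priori only a local martingale, and $\wdt\E_{x,k}[M_T]=1$ is equivalent to non-explosion of $(X,\La)$; so you must invoke Theorem \ref{prop-EU1} (which uses the Lyapunov condition \eqref{eq:switching-2nd-moment-condition}) here. (ii) Your localization does close the interchange-of-limits gap, and the clean way to see it is to change measure back on the tail event: the truncation error is at most $\|f\|\,\wdt\E_{x,k}\bigl[M_T\one_{\{\sup_{t\le T}\psi_t>m\}}\bigr]=\|f\|\,\P_{x,k}\bigl\{\sup_{t\le T}\La(t)>m\bigr\}$, which the Lyapunov function $f$ of Assumption \ref{qH} controls, via the same computation as \eqref{eq-exp-HT-f(k)}, by a bound of order $\bigl(1+|x|^2+f(k)\bigr)/f(m)$ --- locally uniform in $x$, as required. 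On the localized event $\{\sup_{t\le T}\psi_t\le m\}$ the total switch rate is at most $2\kappa H(m+1)$, so your skeleton series is dominated term by term and your Feynman--Kac argument goes through. With these two items supplied, your proof is complete and constitutes a legitimate alternative to the paper's.
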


To proceed, we first consider the strong Feller property for a special type of switching jump-diffusion $(V,\psi)$. Let
the first component $V$ satisfy \begin{equation}\label{(GFP6)}\barray \d V(t)\ad =
b(V(t),\psi(t))\d t+\sg(V(t),\psi(t))\d B(t)\\
\aad\quad +\int_{U} c(V(t-),\psi(t-),u)N(\d t,\d u), \earray\end{equation} and the
second component $\psi$ that is independent of the Brownian motion
$B(\cdot)$ and Poisson random measure $N(\cdot,\cdot)$, be a
time-homogeneous Markov chain with state space $\ss$ satisfying
\begin{equation}\label{(GFP7)} \P  \{\psi(t+\Delta)=l | \psi(t)=k\} =\left \{ \barray
\wdh{q}_{kl}\Delta +o(\Delta),
\ad \, \, \hbox{if}\, \, k \ne l, \\
1+\wdh{q}_{kk}\Delta +o(\Delta), \ad \, \, \hbox{if}\, \, k = l
\end{array} \right.\end{equation} provided $\Delta \downarrow 0$, where $\wdh{Q}=\bigl(\wdh{q}_{kl}\bigr)$ is a conservative Q-matrix such that
(i) all the diagonal elements are equal to $-2 \kappa$, (ii) there are exactly $2\kappa$ off diagonal elements being 1 that are as symmetric and adjacent to the diagonal entry as possible, and (iii) all other elements are zero.
% {\it on every line there are $2\kappa$ entries being $1$, the other off diagonal entries being $0$, and all the nonzero entries are as adjacent and symmetric to the diagonal entry as possible \{?\}.}
To be precise,
\begin{equation}
\label{eq-qkl}
\wdh{q}_{kl}= \begin{cases}
  -2\kappa    & \text{ if }k =l =0, 1,2,\dots, \\
  1    & \text{ if } k =0, 1, 2, \dots, \kappa-1,  \text{ and }  l =0, 1, 2, \dots, 2\kappa  \text{ with } l \neq k, \\
  1  &\text{ if } k =\kappa+1, \kappa+2, \dots, \text{ and } |l-k| \le \kappa, \\
  0 & \text{ otherwise}.
\end{cases}
\end{equation}
%\begin{equation}\label{qkl}\wdh{q}_{kl}=\left\{
%\begin{array}{ll} 1, & \hbox{if}\, 0\le k\le 2\kappa, 0\le l\le 2\kappa \, \hbox{and}\, k\ne l,\\
%0, & \hbox{if}\, 0\le k\le 2\kappa \, \hbox{and}\, 2\kappa+1\le l < \infty,\\
%1, & \hbox{if}\, 2\kappa+1\le k<\infty  \, \hbox{and}\, |k-l|\le 2\kappa,\\
%0, & \hbox{if}\, 2\kappa+1\le k<\infty  \, \hbox{and}\, |k-l|\ge 2\kappa+1,\\
%2\kappa, & \hbox{if}\, 0\le k<\infty, 0\le l<\infty \, \hbox{and}\, k=l.
%\end{array}
%\right.\end{equation}
For example,  when $\kappa=1$,
\begin{equation*}\label{S}
\wdh{Q}=\bigl(\wdh{q}_{kl}\bigr)=\left(\begin{array}{ccccccc}
{  -2} & {  1} & {  1} & 0 & 0 & 0 & \cdots\\
{  1} & {  -2} & {  1} & 0 & 0 & 0 & \cdots\\
0 & {  1} & {  -2} & {  1} & 0 & 0 & \cdots\\
0 & 0 & {  1} & {  -2} & {  1} & 0 & \cdots\\
% 0 & 0 & 0 & {  1} & {  -2} & {  1} & \cdots\\
\vdots & \vdots & \vdots & \vdots & {  \vdots} & {  \vdots} & {  \ddots} %\\
% \vdots & \vdots & \vdots & \vdots & \vdots & {  \ddots} & {  \ddots}
\end{array} \right).
\end{equation*} Obviously, if the $-2$, $1$ and $1$ on the first row of this matrix were replaced by $-1$, $1$ and $0$,
then this matrix would be a very simple birth-death matrix. In the
sequel, we sometimes emphasize the process $(V(t),\psi(t))$ with
initial condition $(V(0),\psi(0))=(x,k)$ by
$(V^{(x,k)}(t),\psi^{(k)}(t))$. Moreover, denote by
$\Gamma(t,(x,k),\cdot)$ the transition probability of $(V,\psi)$.
For subsequent use, let us fix a probability measure $\mu (\cdot)$
that is equivalent to the product measure on $\R^{d} \times \ss$ of
the Lebesgue measure on $\R^{d}$ and the counting measure on $\ss$.
For example, $\mu (\cdot)$ could be taken as the product measure of
the Gaussian probability measure on $\R^{d}$ and the Poisson
probability measure on $\ss$.

\begin{Lemma}   \label{thm:GFP4}
Suppose that Assumptions \ref{I1},  \ref{StFP1}, and    \ref{assumption-finite-range} hold. Then
$(V,\psi)$ has the strong Feller property and the transition
probability $\Gamma(t,(x,k),\cdot)$ of $(V,\psi)$ has density
$\gamma(t,(x,k),\cdot)$
with respect to
%$($w.r.t.$)$
 $\mu (\cdot)$.
\end{Lemma}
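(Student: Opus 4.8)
The plan is to use the defining feature of the pair $(V,\psi)$, namely that the chain $\psi$ is independent of the Brownian motion $B$ and the Poisson random measure $N$. Conditioning on the whole trajectory of $\psi$ then reduces the problem to the single-regime jump diffusions $X^{(k)}$ of \eqref{(EU1)}, for which Lemma \ref{StFP3} already provides both the strong Feller property and a transition density with respect to Lebesgue measure. First I would record that, since $\wdh Q$ is conservative with every diagonal entry equal to $-2\kappa$, the chain $\psi$ has total jump rate $2\kappa$ at every state; it is therefore non-explosive and the number of its jumps on $[0,t]$ is a Poisson random variable of mean $2\kappa t$, so $\psi$ makes only finitely many jumps on $[0,t]$ almost surely and its first holding time is strictly positive. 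Fixing a trajectory of $\psi$ with $n$ jumps at times $0<s_1<\cdots<s_n<t$ through states $k=j_0\to j_1\to\cdots\to j_n=\psi(t)$, the independence of $\psi$ from $(B,N)$ shows that, conditionally on this trajectory, $V$ runs as $X^{(j_0)}$ on $[0,s_1)$, as $X^{(j_1)}$ on $[s_1,s_2)$, and so on. Writing $P^{(k)}_u$ for the transition operator of $X^{(k)}$ over time $u$, the conditional transition operator of $V$ on $[0,t]$ is therefore the composition
\[
P^{(j_0)}_{s_1}\,P^{(j_1)}_{s_2-s_1}\cdots P^{(j_n)}_{t-s_n}.
\]

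To obtain the strong Feller property I would push continuity through this composition. By Lemma \ref{StFP3}, each $P^{(k)}_u$ with $u>0$ is a Markov kernel that is moreover strong Feller. Using the elementary fact that $PQ$ is strong Feller whenever $P$ is strong Feller and $Q$ is sub-Markov (for $f$ bounded measurable, $Qf$ is bounded measurable, hence $P(Qf)$ is bounded continuous), and writing the above composition with the kernel $P^{(k)}_{s_1}$ as its outermost (leftmost) factor, I conclude that for almost every trajectory of $\psi$ (namely whenever $s_1>0$) the conditional map
\[
g_\psi(x):=\E\bigl[f(V(t),\psi(t))\,\big|\,V(0)=x,\ \{\psi(s)\}_{0\le s\le t}\bigr]
\]
is continuous in $x$ for every bounded measurable $f$. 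Since $\abs{g_\psi(x)}\le \|f\|_{\infty}$ uniformly in the trajectory, the dominated convergence theorem applied to $\E_{x,k}[f(V(t),\psi(t))]=\E^{\psi}_{k}[g_\psi(x)]$, where $\E^{\psi}_{k}$ denotes expectation over the law of $\psi$ with $\psi(0)=k$, shows that $x\mapsto \E_{x,k}[f(V(t),\psi(t))]$ is continuous. Because $\ss$ carries the discrete topology, continuity in $(x,k)$ reduces to continuity in $x$ for each fixed $k$, which gives the strong Feller property of $(V,\psi)$.

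For the transition density I would use the same decomposition together with the fact that a composition of kernels each possessing a Lebesgue density again possesses a Lebesgue density. Since all the time increments $s_1,s_2-s_1,\dots,t-s_n$ are strictly positive, Lemma \ref{StFP3} gives each factor a density $p^{(j_i)}$, so the conditional law of $V(t)$ has density
\[
\rho_\psi(x,y)=\int_{(\R^{d})^{n}} p^{(j_0)}(s_1,x,y_1)\,p^{(j_1)}(s_2-s_1,y_1,y_2)\cdots p^{(j_n)}(t-s_n,y_n,y)\,\d y_1\cdots \d y_n
\]
with respect to Lebesgue measure. Then Tonelli's theorem yields
\[
\Gamma(t,(x,k),A\times\{l\})=\int_{A}\E^{\psi}_{k}\bigl[\one_{\{\psi(t)=l\}}\,\rho_\psi(x,y)\bigr]\,\d y,
\]
so $(V,\psi)$ has a transition density with respect to the product of Lebesgue measure on $\R^{d}$ and counting measure on $\ss$. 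Multiplying by the strictly positive Radon--Nikodym derivative of this product measure with respect to $\mu$ produces the desired density $\gamma(t,(x,k),\cdot)$ with respect to $\mu$.

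The step I expect to be most delicate is the rigorous justification of the conditioning and of the interchange of the $\psi$-expectation with the limiting and integration operations. Because $\psi$ lives on the infinite state space $\ss$ and makes a random, unbounded number of jumps, one must verify the joint measurability of $g_\psi(x)$ and $\rho_\psi(x,y)$ in the trajectory of $\psi$ and in the spatial variables, and organize the computation as a sum over the jump count $n$ and an integral over the ordered jump times, in the spirit of the series expansion \eqref{(StFP5)}--\eqref{(StFP9)}. Once the decomposition is in place these interchanges are routine: the uniform bound $\|f\|_{\infty}$ justifies dominated convergence for the strong Feller statement, nonnegativity justifies Tonelli for the density statement, and the almost-sure positivity of the first holding time guarantees that the leading factor $P^{(k)}_{s_1}$ is genuinely strong Feller.
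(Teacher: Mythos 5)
Your proposal is correct, and it rests on the same disintegration over the jump skeleton of $\psi$ that drives the paper's argument, but the analytic mechanism is genuinely different. The paper conditions only on the first jump $(\upsilon_{1},\psi(\upsilon_{1}))$, iterates the resulting identity \eqref{(GFP10)} countably many times to obtain a series expansion \eqref{(GFP10b)} indexed by the jump count, bounds the $n$th term by $\frac{(2\kappa t)^{n-1}}{(n-1)!}\exp\{-2\kappa t\}$ to get uniform convergence in $x$, shows each term is lower semicontinuous in $x$ via the strong Feller property of $X^{(k)}$ from Lemma \ref{StFP3}, and then upgrades lower semicontinuity of the transition probabilities to the strong Feller property via Proposition 6.1.1 of \cite{MeynT-93}, exactly as in the proof of Lemma \ref{StFP3}. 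You instead condition on the entire $\psi$-trajectory at once, identify the conditional kernel as the composition $P^{(j_{0})}_{s_{1}}P^{(j_{1})}_{s_{2}-s_{1}}\cdots P^{(j_{n})}_{t-s_{n}}$, and invoke the elementary fact that a strong Feller kernel composed on the right with a bounded kernel remains strong Feller, finishing with dominated convergence over the law of $\psi$; this yields genuine continuity of $x\mapsto \E_{x,k}[f(V(t),\psi(t))]$ directly, with no lower-semicontinuity detour and no series bookkeeping. What your route costs is precisely the scaffolding you flag yourself: the identification of the conditional law of $V$ given $\sigma(\psi)$ with the composed kernels (which rests on pathwise uniqueness under Assumption \ref{I1}, the same fact implicitly behind \eqref{(GFP10)}), joint measurability of $g_{\psi}(x)$ in the trajectory and in $x$, and, for the density step, a version of $p^{(k)}(u,x,y)$ jointly measurable in $(u,x,y)$ so that Tonelli applies --- the paper sidesteps this by working with the kernels $P^{(k)}(s,x,\d y)$ throughout and extracting the density only at the end. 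A small bonus of the paper's series form is reusability: essentially the same expansion reappears in the irreducibility argument \eqref{(FP22)}, whereas your path-conditioning formula is self-contained but single-use. Both arguments correctly exploit that every diagonal entry of $\wdh{Q}$ equals $-2\kappa$, so the jump times of $\psi$ form a rate-$2\kappa$ Poisson process independent of the embedded chain: finitely many jumps on $[0,t]$, an a.s.\ positive first holding time, and a.s.\ strict inequalities $0<s_{1}<\cdots<s_{n}<t$, which is exactly what makes every factor in your composition a strong Feller kernel with a density.
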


\begin{proof}
Denote by the $\upsilon_{1}$ the stopping time defined by
$\upsilon_{1}=\inf \{s>0: \psi(t) \ne \psi(0)\}$. When $\psi(0)=k$,
$(\upsilon_{1},\psi(\upsilon_{1}))$ on $[0,\infty)\times
\ss_{k}$ with respect to the product of the Lebesgue
measure and the counting measure has the probability density
$\exp\bigl(-2\kappa s\bigr){\mathbf{1}}_{\ss_{k}}(l)$,
where $\ss_{k}:= \{l \in \ss: \wdh q_{kl} =1 \}$ % \chao{Please see if this is OK?} \fubao{It is OK.}
is a finite subset of $\ss$.
   %  and $\ss_{k}$ can be exactly determined by the above Q-matrix $\wdh{Q}=\bigl(\wdh{q}_{kl}\bigr)$.
For any given $t>0$, $x\in \R^{d}$, $k,l\in \ss$ and $A \in
\B(\R^{d})$, we have the relation \begin{equation}\label{(GFP10)} \begin{aligned}
%\ad
\Gamma \bigl(t,(x,k),A \times
\{l\}\bigr) & =\delta_{kl}\exp\{-2\kappa t\}P^{(k)}(t,x,A)
   \\ &\ \ +\int_{0}^{t}\!\sum_{l_{1} \in \ss_{k}}\! \int
\exp\{-2\kappa s_{1}\}P^{(k)}(s_{1},x,\d y_{1}) %\\ & \qquad \qquad \qquad \qquad\qquad\hspace{8em} \times
\Gamma(t-s_{1},(y_{1},l_{1}),A \times
\{l\})\d s_{1}, \end{aligned}\end{equation} where $\delta_{kl}$ is the Kronecker
symbol in $k$, $l$, which equals $1$ if $k=l$ and is $0$ if $k\neq
l$. From this we have \begin{equation}\label{(GFP10a)} \barray
%\ad
\Gamma\bigl(t-s_{1},(y_{1},l_{1}),A \times
\{l\}\bigr)\ad
=\delta_{l_{1}l}\exp\{-2\kappa (t-s_{1})\}
P^{(l_{1})}(t-s_{1},y_{1},A)\\
\aad \quad +\int_{0}^{t-s_{1}}\sum_{l_{2}\in \ss_{l_{1}}} \int
\exp\{-2\kappa s_{2}\}P^{(l_{1})}(s_{2},y_{1},\d y_{2})\\
\aad \qquad \qquad \qquad \times \Gamma(t-s_{1}-s_{2},(y_{2},l_{2}),A
\times \{l\})\d s_{2}. \earray\end{equation} Using (\ref{(GFP10)}) countably
many times, as in the proof of Lemma \ref{StFP3}, we conclude that
for any given $t>0$, $x\in \R^{d}$ and $A \in \B(\R^{d})$,
\begin{equation}\label{(GFP10b)} \Gamma\bigl(t,(x,k),A \times \{l\}\bigr)= \hbox{a series}.
\end{equation} For this series, as in the proof of Lemma \ref{StFP3}, we
derive that the first term (in which $\psi$ has no
jump on $[0,t]$)
is
\begin{equation}\label{(GFP10c)}
\delta_{kl}\exp\{-2\kappa t\}P^{(k)}(t,x,A),
\end{equation} the second term (in which $\psi$ has just one
jump on
$[0,t]$) is
\begin{equation}\label{(GFP10d)}
\exp\{-2\kappa t\}\int_{0}^{t} \sum_{l_{1}\in \ss_{k}, l_{1}=l} \int
P^{(k)}(s_{1},x,\d y_{1}) P^{(l_{1})}(t-s_{1},y_{1},A)\d s_{1},
\end{equation} and the third term (in which $\psi$ has just
two jumps on
$[0,t]$) is %  \fubao{On the first line of the following equation, $\ss-{k}$ should be $\ss_{k}$.}
\begin{equation}\label{(GFP10e)}
\begin{array}{ll}
\ad \exp\{-2\kappa t\} \int_{0}^{t}\int_{0}^{t-s_{1}}
\sum_{l_{1}\in \ss_{k}, l_{2}\in \ss_{l_{1}}, l_{2}=l} \int \int
P^{(k)}(s_{1},x,\d y_{1})\\
\aad \qquad\ \  \times P^{(l_{1})}(s_{2},y_{1},\d y_{2})
P^{(l_{2})}(t-s_{1}-s_{2},y_{2},A)\d s_{2}\d s_{1}.
\end{array}
\end{equation} Similar to the proof of Lemma of \ref{StFP3}, we can easily verify that the $n$th term of the series in \eqref{(GFP10b)}  is bounded above by $\frac{(2\kappa t)^{n-1}}{(n-1)!}\exp\{-2\kappa t\}.$ Thus it is uniformly convergent with respect to $x\in \R^{d}$.  Noting
that $\ss$ is a infinitely countable set
with a
discrete metric, and using similar arguments as those in the
 %  the results and methods of proof %methods of
proof of Lemma \ref{StFP3}, we derive Lemma \ref{thm:GFP4}.
% \footnote{Why the series in \eqref{(GFP10b)} is convergent? It does not have a term-by-term upper bound as that in the proof of Lemma  \ref{thm:GFP4}.} \fubao{Note that for each $k\in \ss$, the set $\ss_{k}$ has at most $2\kappa$ elements, and also note that for any $k\in \ss$, $t\ge 0$, $x\in \R^{d}$ and $A \in \B(\R^{d})$, $P^{(k)}(t,x,A)\le 1$, we can easily verify that that the $n$th term of the series in \eqref{(GFP10b)} does not exceed $$\frac{\bigl(2\kappa t\bigr)^{n-1}}{(n-1)!}\exp\{-2\kappa t\}.$$ In fact, replacing all the transition functions $P^{(k)}(t,x,A)$ by $1$, one can easily see the above fact.}
\end{proof}

% \vskip 6cm

%%%%%%%%%%%%%%%%%%%%%%%%%%%%%%%%%%%%%%%%%%%%%%%%%%%%%%%%%%%%
\begin{Lemma}   \label{FP2}
Suppose that Assumptions \ref{I1},   \ref{StFP1}, and  \ref{assumption-finite-range} hold. Then for all $T>0$, $\delta
>0$ and $k \in \ss$, we have
\begin{equation}\label{(FP4)}\P  \biggl\{\sup_{0 \le t \le T}
\bigl|V^{(x,k)}(t)-V^{(y,k)}(t)\bigr| \ge \delta \biggr\} \to 0
\end{equation}
as $|x-y| \to 0$.
\end{Lemma}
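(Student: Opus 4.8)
The plan is to exploit the defining structure of the pair $(V,\psi)$: by construction the switching component $\psi$ is independent of both the Brownian motion $B$ and the Poisson random measure $N$. Hence $V^{(x,k)}$ and $V^{(y,k)}$ can be realized on one and the same probability space, driven by a \emph{common} switching path $\psi$ (with $\psi(0)=k$), a common $B$, and a common $N$, differing only in their initial data. Setting $D(t):=V^{(x,k)}(t)-V^{(y,k)}(t)$ and subtracting the two copies of \eqref{(GFP6)}, I would write
\begin{equation*}
\begin{aligned}
D(t) &= (x-y) + \int_0^t [b(V^{(x,k)}(s),\psi(s)) - b(V^{(y,k)}(s),\psi(s))]\,\d s \\
&\quad + \int_0^t [\sg(V^{(x,k)}(s),\psi(s)) - \sg(V^{(y,k)}(s),\psi(s))]\,\d B(s) \\
&\quad + \int_0^t \int_U [c(V^{(x,k)}(s-),\psi(s-),u) - c(V^{(y,k)}(s-),\psi(s-),u)]\,N(\d s,\d u).
\end{aligned}
\end{equation*}
The essential point is that the two copies experience the \emph{same} state $\psi(s)$ at every time, so at each instant the coefficient differences are controlled by the Lipschitz bounds in the $x$-variable alone.

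First I would derive a second-moment estimate for $\sup_{0\le t\le T}|D(t)|$. Since $\Pi(\cdot)$ is finite under Assumption \ref{StFP1} (so $U_{0}=\emptyset$), I split the jump integral into its compensated martingale part $\wdt N(\d s,\d u)$ and its compensator $\Pi(\d u)\,\d s$. Applying the Burkholder--Davis--Gundy inequality to the Brownian and compensated-jump martingale terms, the Cauchy--Schwarz inequality to the drift and compensator terms, and the Lipschitz bounds \eqref{Lip-condition} of Assumption \ref{I1} --- which hold with the \emph{same} constant $H$ for every $k\in\ss$ --- I obtain an inequality of the form
\begin{equation*}
\E\Big[\sup_{0\le s\le t}|D(s)|^{2}\Big] \le C|x-y|^{2} + C\int_0^t \E\Big[\sup_{0\le u\le s}|D(u)|^{2}\Big]\,\d s,
\end{equation*}
where $C=C(H,T,\Pi(U))$ does not depend on the realization of $\psi$. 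Gronwall's inequality then gives $\E[\sup_{0\le t\le T}|D(t)|^{2}]\le C|x-y|^{2}e^{CT}$, and Chebyshev's inequality yields
\begin{equation*}
\P\Big\{\sup_{0\le t\le T}\big|V^{(x,k)}(t)-V^{(y,k)}(t)\big|\ge\delta\Big\} \le \frac{1}{\delta^{2}}\,\E\Big[\sup_{0\le t\le T}|D(t)|^{2}\Big] \le \frac{Ce^{CT}}{\delta^{2}}\,|x-y|^{2},
\end{equation*}
which tends to $0$ as $|x-y|\to 0$, establishing \eqref{(FP4)}.

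The point requiring care --- and the main obstacle --- is the uniformity of the constant $C$ over all possible switching paths, which is forced by the fact that the Lipschitz constant $H$ in \eqref{Lip-condition} is uniform in $k$. This is precisely what lets each integrand bound, e.g.\ $|b(V^{(x,k)}(s),\psi(s))-b(V^{(y,k)}(s),\psi(s))|^{2}\le H|D(s)|^{2}$, hold pathwise in $\psi$, so that no information about the (possibly complicated) law of $\psi$ on the infinite state space $\ss$ is ever needed. A secondary technical matter is integrability: to apply the BDG inequality rigorously I would first work up to the localizing times $T_{R}:=\inf\{t\ge 0:|V^{(x,k)}(t)|\vee|V^{(y,k)}(t)|\ge R\}$, derive the above estimate for $t\wedge T_{R}$, and then let $R\to\infty$, using the finite second moments guaranteed by the linear growth \eqref{linear-growth} together with Proposition \ref{prop-moment-estimate} and Fatou's lemma. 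Because $\psi$ takes values in an infinite state space, it is the uniformity in $k$ of the hypotheses in Assumptions \ref{I1} and \ref{qH} that prevents the unbounded range of the switching from corrupting the estimate.
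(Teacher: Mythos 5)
Your proposal is correct, and it is in substance the proof the paper intends: the paper's own ``proof'' of Lemma \ref{FP2} consists of a single citation to \cite[Lemma 4.1]{Xi-09}, and the argument behind that cited lemma is exactly your synchronous coupling --- realize $V^{(x,k)}$ and $V^{(y,k)}$ on one probability space with a common $\psi^{(k)}$, $B$ and $N$, subtract the two copies of \eqref{(GFP6)}, estimate via BDG, Cauchy--Schwarz and the $k$-uniform Lipschitz bound \eqref{Lip-condition}, then close with Gronwall and Chebyshev. Two remarks. First, it is worth making explicit the structural fact that legitimizes the common-$\psi$ realization: $\psi$ is an \emph{autonomous} Markov chain, since its generator $\wdh{Q}$ in \eqref{(GFP7)} is a constant matrix not depending on $V$; hence both pairs $(V^{(x,k)},\psi^{(k)})$ and $(V^{(y,k)},\psi^{(k)})$ solve the system \eqref{(GFP6)}--\eqref{(GFP7)} with literally the same switching path. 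This coupling would fail for $(X,\La)$ itself, whose switching rates $q_{kl}(x)$ are state-dependent (the two copies would decouple their switching after the first divergence) --- which is precisely why the paper proves strong Feller for $(V,\psi)$ first and then transfers it to $(X,\La)$ via the Radon--Nikodym derivative \eqref{(GFP8)}. Second, a minor citation point: Proposition \ref{prop-moment-estimate} is stated for $(X,\La)$, not $(V,\psi)$; however, the estimate \eqref{eq:X^2-moment} uses only the linear growth condition \eqref{linear-growth} with a constant uniform in $k$, so the identical argument yields $\E[\sup_{0\le t\le T}|V^{(x,k)}(t)|^{2}]<\infty$, and your localization with $T_{R}\to\infty$ then goes through as written.
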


\begin{proof}
This lemma is just  \cite[Lemma 4.1]{Xi-09}.
\end{proof}

\begin{Lemma}   \label{lem:GFP5}
For any  bounded and measurable function $f$ on $\R^{d} \times \ss$
and any  positive number $\delta >0$, there exists a compact subset
$D \subset \R^{d}$ such that $\mu (D^{c} \times \ss) <\delta$ and
$f|_{D \times \ss}$, the function $f$ restricted to $D \times \ss$,
is uniformly continuous.
\end{Lemma}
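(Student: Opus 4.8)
The plan is to combine a reduction to the $\R^d$-marginal of $\mu$ with a regime-by-regime application of Lusin's theorem whose errors are summed geometrically. First I would observe that the measure constraint involves only the marginal: writing $\nu(\cdot):=\mu(\cdot\times\ss)$, one has $\mu(D^c\times\ss)=\nu(D^c)$, and since $\mu$ is a probability measure equivalent to the product of Lebesgue measure $\m$ on $\R^d$ with the counting measure on $\ss$, the marginal $\nu$ is a Borel probability measure on $\R^d$, equivalent to $\m$. In particular $\nu$ is tight, so I may fix a radius $R>0$ with $\nu(\{|x|>R\})<\delta/2$ and search for $D$ inside the compact ball $\overline{B}_R:=\{x\in\R^d:|x|\le R\}$.

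Next, for each regime $k\in\ss$ the slice $f(\cdot,k):\R^d\to\R$ is bounded and Borel measurable, so Lusin's theorem on the compact set $\overline{B}_R$ with respect to the finite measure $\nu$ produces a compact $E_k\subseteq\overline{B}_R$ with $f(\cdot,k)|_{E_k}$ continuous and $\nu(\overline{B}_R\setminus E_k)<\delta\,2^{-(k+2)}$. I would then set $D:=\bigcap_{k\in\ss}E_k$. As a closed subset of $\overline{B}_R$ it is compact, and
\[
\nu(D^c)\le \nu(\{|x|>R\})+\sum_{k\in\ss}\nu(\overline{B}_R\setminus E_k)<\frac{\delta}{2}+\sum_{k\in\ss}\delta\,2^{-(k+2)}=\delta,
\]
so $\mu(D^c\times\ss)=\nu(D^c)<\delta$, which is the required measure bound. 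Moreover $D\subseteq E_k$ for every $k$, hence every slice $f(\cdot,k)|_{D}$ is continuous.

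From slicewise continuity I would deduce continuity of $f$ on the metric space $(D\times\ss,\lambda)$: since two points of $D\times\ss$ lying in different regimes are at $\lambda$-distance at least $1$, any $\lambda$-convergent sequence is eventually constant in its $\ss$-coordinate, so convergence reduces to convergence within a single continuous slice. Because $D$ is compact, each $f(\cdot,k)|_{D}$ is then automatically uniformly continuous on $D$, with its own modulus of continuity.

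The delicate step — and the one I expect to be the real obstacle — is upgrading "uniformly continuous on each regime slice'' to a \emph{single} modulus valid simultaneously for all $k\in\ss$, i.e.\ genuine uniform continuity of $f$ on the noncompact space $D\times\ss$. Lusin's theorem controls continuity but not equicontinuity across regimes, and for arbitrary bounded measurable $f$ a common modulus cannot be expected (the slices of $f(x,k)=\sin(kx)$ on any $D$ of positive Lebesgue measure fail to be equicontinuous). The way I would close this gap is to exploit that $\mu$ is a probability measure: choosing $N$ with $\sum_{k>N}\mu(\R^d\times\{k\})$ as small as desired concentrates essentially all of the mass on the finitely many regimes $\{0,1,\dots,N\}$, over which the common modulus is merely the maximum of finitely many slice moduli and hence exists; the negligible tail is then treated through its small $\mu$-mass. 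This reduction to finitely many regimes is precisely the point where the countability of $\ss$ must be handled with care, and in the applications of Sections~\ref{sect-StFP-jump-diffusion} and~\ref{sect-StFP} it is reinforced by the finite interaction range of Assumption~\ref{assumption-finite-range}; everything else is a routine invocation of Lusin's theorem.
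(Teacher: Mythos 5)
Your construction up to the flagged step is sound: the reduction to the marginal $\nu$, the per-regime application of Lusin's theorem with geometrically summed errors, the compact intersection $D=\bigcap_{k}E_{k}$, and the observation that points of $D\times\ss$ in different regimes are at $\lambda$-distance at least $1$ (so that uniform continuity of $f|_{D\times\ss}$ is \emph{exactly} the existence of one common modulus for all slices $f(\cdot,k)|_{D}$, $k\in\ss$). But the patch you propose for that last step does not work, and the gap is genuine. Uniform continuity is a pointwise property of the restriction to the set $D\times\ss$, not an almost-everywhere property: choosing $D\subseteq\R^{d}$ gives you no way to excise the regimes $k>N$ from the domain, however little $\mu$-mass $\R^{d}\times\{k>N\}$ carries, so ``treating the negligible tail through its small $\mu$-mass'' is incompatible with the conclusion you are trying to prove. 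Worse, your own example closes the door on \emph{any} argument for the cylinder statement: for $f(x,k)=\sin(kx)$ in $d=1$, every admissible $D$ has positive Lebesgue measure (if $\m(D)=0$ then $\mu(D\times\ss)=0$ by equivalence with the product measure, so $\mu(D^{c}\times\ss)=1$), and $\{\sin(k\cdot)\}_{k}$ is never equicontinuous on a compact set of positive measure: equicontinuity plus Arzel\`a--Ascoli would give $\sin(k_{j}x)\to g(x)$ uniformly on $D$ along a subsequence, while $\int_{D}\sin(k_{j}x)\,\d x\to 0$ forces $g=0$ a.e.\ on $D$ and $\int_{D}\sin^{2}(k_{j}x)\,\d x\to \m(D)/2\neq 0$, a contradiction. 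So what fails is not your execution but the target itself in the form of a full cylinder $D\times\ss$.

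For comparison, the paper offers no details here --- its proof is a one-line citation of Lusin's theorem (Cohn, Theorem 7.4.3). Applied on the locally compact separable space $\R^{d}\times\ss$ with the finite regular measure $\mu$, Lusin yields a compact $K\subseteq\R^{d}\times\ss$ with $\mu(K^{c})<\delta$ and $f|_{K}$ continuous, hence uniformly continuous; but any compact subset of $\R^{d}\times\ss$ meets only finitely many regimes, i.e.\ $K\subseteq D\times\{0,1,\dots,N\}$, which is precisely the finite-regime version your construction already delivers (take the maximum of the $N+1$ slice moduli), not the cylinder $D\times\ss$. The viable repair is therefore to weaken the lemma to such a $K$ and adjust its use: in the proof of Lemma \ref{lem:GFP6} one adds $\{(V^{(\cdot,k)}(t),\psi^{(k)}(t))\in K^{c}\}$ to the exceptional event, which is controlled since the uniform-integrability estimates \eqref{(GFP13)}--\eqref{(GFP14)} apply verbatim to any measurable set of $\mu$-measure less than $\delta$, not only to cylinders $A\times\ss$; alternatively $\P\{\psi^{(k)}(t)>N\}$ is small for large $N$, because $\psi$ jumps at total rate $2\kappa$ with jump sizes at most $2\kappa$ by \eqref{eq-qkl}. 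Note also that Assumption \ref{assumption-finite-range} constrains the matrix $q$, not the arbitrary bounded measurable test function $f$, so it cannot rescue the cylinder form of the statement as you suggest at the end.
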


\begin{proof}
This lemma can be derived from the Lusin Theorem (see, for example
\cite[Theorem 7.4.3]{Cohn-80}).
% \footnote{Can you please give me the reference for {C1980} here? I lost it when I made changes over time.}
% \fubao{{C1980} is {D.L. Cohn, {\it Measure Theory}, Birkh\"{a}uster, Boston, 1980.}}
%\cite[Theorem 6.3 in Chapter 5]{Y1988}).
%easily.
\end{proof}

\begin{Lemma}   \label{lem:GFP6}
Suppose that Assumptions \ref{I1},   \ref{StFP1}, and  \ref{assumption-finite-range} hold. For any
given $t>0$ and bounded measurable function $f$ on $\R^{d} \times
\ss$, we have that
\begin{equation}\label{(GFP11)}
f(V^{(x,k)}(t),\psi^{(k)} (t)) \to f(V^{(y,k)}(t),\psi^{(k)} (t))
\quad \hbox{in probability}
\end{equation}
as $|x-y| \to 0$.
\end{Lemma}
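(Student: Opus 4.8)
The plan is to exploit the special structure of the pair $(V,\psi)$. Since the chain $\psi$ is driven independently of the Brownian motion $B$ and the Poisson measure $N$ and its transition rates do not depend on the continuous component, the factor $\psi^{(k)}(t)$ appearing on \emph{both} sides of \eqref{(GFP11)} is literally the \emph{same} random variable. Consequently only the continuous components $V^{(x,k)}(t)$ and $V^{(y,k)}(t)$, realized on a common probability space driven by the same $B$ and $N$, differ, and by Lemma \ref{FP2} even $\sup_{0\le s\le t}|V^{(x,k)}(s)-V^{(y,k)}(s)|$ tends to $0$ in probability as $x\to y$. The only genuine difficulty is that $f$ is merely bounded and measurable, so this convergence cannot be pushed through $f$ directly; this is exactly where Lusin's theorem (Lemma \ref{lem:GFP5}) enters.

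First I would fix $\e>0$ and an arbitrary $\eta>0$ and produce a good compact set. By Lemma \ref{thm:GFP4} the law of $(V^{(y,k)}(t),\psi^{(k)}(t))$ has a density $\gamma(t,(y,k),\cdot)\in L^{1}(\mu)$ with respect to the reference measure $\mu$; by absolute continuity of the integral there is $\delta>0$ such that $\mu(E)<\delta$ forces $\int_{E}\gamma(t,(y,k),\cdot)\,\d\mu<\eta$. Applying Lemma \ref{lem:GFP5} with this $\delta$ yields a compact $D\subset\R^{d}$ with $\mu(D^{c}\times\ss)<\delta$ on which $f|_{D\times\ss}$ is uniformly continuous; in particular $\P\{V^{(y,k)}(t)\notin D\}<\eta$. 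Uniform continuity then supplies $\rho>0$ such that $|f(v,m)-f(w,m)|<\e$ whenever $(v,m),(w,m)\in D\times\ss$ and $|v-w|<\rho$ (here I use that the second coordinate is the common value $\psi^{(k)}(t)$, so the discrete part of the metric $\la$ contributes nothing).

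Next I would split according to whether the two continuous components lie in $D$. Since the event $\{|f(V^{(x,k)}(t),\psi^{(k)}(t))-f(V^{(y,k)}(t),\psi^{(k)}(t))|>\e\}$ is contained in $\{V^{(x,k)}(t)\notin D\}\cup\{V^{(y,k)}(t)\notin D\}\cup\{|V^{(x,k)}(t)-V^{(y,k)}(t)|\ge\rho\}$, it suffices to control these three probabilities as $x\to y$. The middle one is already $<\eta$, and the last one tends to $0$ by Lemma \ref{FP2}. For the first I would invoke the strong Feller property of $(V,\psi)$ from Lemma \ref{thm:GFP4}: applying it to the bounded measurable function $\one_{D^{c}\times\ss}$ shows that $x\mapsto\P\{V^{(x,k)}(t)\notin D\}=\E[\one_{D^{c}\times\ss}(V^{(x,k)}(t),\psi^{(k)}(t))]$ is continuous, hence converges to $\P\{V^{(y,k)}(t)\notin D\}<\eta$. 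Combining the three bounds gives $\limsup_{x\to y}\P\{|f(V^{(x,k)}(t),\psi^{(k)}(t))-f(V^{(y,k)}(t),\psi^{(k)}(t))|>\e\}\le 2\eta$, and letting $\eta\downarrow0$ yields \eqref{(GFP11)}.

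The step I expect to be the main obstacle is bridging the \emph{$\mu$-measure} smallness delivered by Lusin's theorem with the \emph{probabilistic} smallness actually needed. This is resolved by the absolute continuity of the law of $V^{(y,k)}(t)$ with respect to $\mu$ (the density from Lemma \ref{thm:GFP4}) for the static term $\P\{V^{(y,k)}(t)\notin D\}$, and by the strong Feller continuity of $x\mapsto\P\{V^{(x,k)}(t)\notin D\}$ for the moving term. One should also be careful about the order of quantifiers: the compact set $D$ and the modulus $\rho$ must be fixed \emph{before} sending $x\to y$, so that all three estimates remain valid in the limit.
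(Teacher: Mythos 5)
Your proof is correct, and it follows the paper's overall scaffolding — the density from Lemma \ref{thm:GFP4}, Lusin's theorem via Lemma \ref{lem:GFP5}, and Lemma \ref{FP2} for pathwise closeness, with the same three-event decomposition as in \eqref{(GFP15)} — but it handles the crucial ``moving'' term $\P\{V^{(x,k)}(t)\notin D\}$ by a genuinely different device. The paper controls this term uniformly in the initial point by noting that the strong Feller property forces $\gamma(t,(x_n,k),\cdot)\to\gamma(t,(x,k),\cdot)$ weakly in $L_{1}(\mu)$, then invoking the Dunford--Pettis theorem to obtain uniform integrability of the family of densities, which yields a single $\delta>0$ making $\P\bigl((V^{(x_n,k)}(t),\psi^{(k)}(t))\in A\times\ss\bigr)<\varepsilon$ simultaneously for all $n$ and all $A$ with $\mu(A\times\ss)<\delta$. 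You instead observe that since the compact set $D$ is fixed \emph{before} letting $x\to y$, one only needs continuity at $y$ of the single map $x\mapsto\E\bigl[\one_{D^{c}\times\ss}(V^{(x,k)}(t),\psi^{(k)}(t))\bigr]$, which is exactly what the strong Feller property of Lemma \ref{thm:GFP4} delivers when applied to the bounded measurable indicator $\one_{D^{c}\times\ss}$; the density is then needed only for the static term $\P\{V^{(y,k)}(t)\notin D\}$, via absolute continuity of the integral. This is a legitimate simplification: Dunford--Pettis buys uniform smallness over all small sets $A$ and the whole sequence at once, which is strictly more than the argument requires, whereas you extract just the needed estimate from the strong Feller property already in hand. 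Your closing caution about quantifier order — fixing $D$ and the modulus $\rho$ before sending $x\to y$ — is precisely what makes the shortcut sound, and your remark that $\psi^{(k)}(t)$ is the same random variable on both sides of \eqref{(GFP11)}, so the discrete coordinate contributes nothing to the modulus of continuity, is also used implicitly in the paper's estimate.
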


\begin{proof}
It follows from Lemma \ref{thm:GFP4} that for any $(x,k) \in \R^{d}
\times \ss$, any $A \in {\B}(\R^{d})$ and $l \in \ss$,
\begin{equation}\label{(GFP12)}
\P  \bigl((V^{(x,k)}(t),\psi^{(k)} (t)) \in A \times
\{l\}\bigr)=\int_{A} \gamma(t,(x,k),(y,l)) \mu(\d y \times \{l\}).
\end{equation} By the strong Feller property
proved in Lemma \ref{thm:GFP4}, for any sequence $\{x_{n}\}$
satisfying $x_n\to x$
%tending to $x$
and for any $g(y,l) \in L^{\infty} (\mu)$, we have
$$\sum_{l \in \ss} \int g(y,l)\gamma(t,(x_{n},k),(y,l))
\mu(\d y \times \{l\}) \to \sum_{l \in \ss} \int
g(y,l)\gamma(t,(x,k),(y,l))\mu(\d y \times \{l\})$$ as $n \to \infty$.
Namely, when $n \to \infty$, $\gamma(t,(x_{n},k),\cdot)$ converges
weakly to $\gamma(t,(x,k),\cdot)$ in $L_{1}(\mu)$.
%(see for example \cite[Definition 5.8 in Chapter 7]{Y1988}).
Thus, by the Dunford-Pettis theorem,
%(see for example \cite[Theorem 5.10 in Chapter 7]{Y1988}),
we obtain that the family
$\{\gamma(t,(x_{n},k),\cdot): n \ge 1\}$ is uniformly integrable in
$L_{1}(\mu)$. Hence for any given $\varepsilon>0$, there exists
a $\delta >0$ such that for all $A \in {\B}(\R^{d})$, if $\mu (A
\times \ss) <\delta$, then for all $n \ge 1$,
\begin{equation}\label{(GFP13)}
\P  \bigl((V^{(x_n,k)}(t),\psi^{(k)} (t)) \in A \times \ss
\bigr)=\sum_{l \in \ss} \int_{A}\gamma(t,(x_{n},k),(y,l))\mu(\d y \times
\{l\})<\varepsilon,
\end{equation}
and
\begin{equation}\label{(GFP14)}
\P  \bigl((V^{(x,k)}(t),\psi^{(k)} (t)) \in A \times \ss \bigr)
=\sum_{l \in \ss} \int_{A}\gamma(t,(x,k),(y,l))\mu(\d y \times
\{l\})<\varepsilon.
\end{equation}

By Lemma \ref{lem:GFP5}, we find a compact subset $D \subset
\R^{d}$ such that $\mu (D^{c} \times \ss) <\delta$ and $f|_{D
\times \ss}$ is uniformly continuous. Namely, for any given $\eta
>0$, there exists $\delta_{1}>0$ such that for all $(x,k),\,
(x^{\prime},k) \in D \times \ss$, if $|x-x^{\prime}|<\delta_{1}$,
then $|f(x,k)-f(x^{\prime},k)|<\eta$ for all $k \in \ss$. Therefore,
from (\ref{(GFP13)}) and (\ref{(GFP14)}), we arrive at
\begin{equation}\label{(GFP15)}
\begin{aligned} \displaystyle
\P &  \bigl(|f(V^{(x_n,k)}(t),\psi^{(k)}(t))
-f(V^{(x,k)}(t),\psi^{(k)}(t))|>\eta \bigr)\\ &
  \le
\P    \bigl(|V^{(x_n,k)}(t)-V^{(x,k)}(t)|>\delta_{1} \bigr)\\
& \quad \quad \displaystyle +\P  \bigl((V^{(x_n,k)}(t),\psi^{(k)}(t))
\notin D \times \ss \bigr)
+\P  \bigl((V^{(x,k)}(t),\psi^{(k)}(t)) \notin D \times \ss \bigr)\\
&  \le \P
\bigl(|V^{(x_n,k)}(t)-V^{(x,k)}(t)|>\delta_{1} \bigr)+2\varepsilon.
\end{aligned}
\end{equation}
Meanwhile, by Lemma \ref{FP2}, $\P  \bigl(|V^{(x_n,k)}(t)-V^{(x,k)}(t)|>\delta_{1} \bigr) \to 0$ as $n \to \infty$.
Inserting this into (\ref{(GFP15)}) and noting that $\varepsilon$ and $\eta$ are arbitrary, (\ref{(GFP11)}) holds. This
completes the proof.  \end{proof}

In order to transfer the strong Feller property from $(V,\psi)$ to
$(X,\La)$, we need to make a comparison between these two processes.
Let $\{\upsilon_{m}\}$ be the sequence of stopping times defined by
$$\upsilon_{0}=0, \qquad
\upsilon_{m+1}=\inf \{s> \upsilon_{m}: \psi(t) \ne \psi(\upsilon_{m})\} \ \hbox{ for }
\ m\ge 0 .$$ Define $n(t)=\max \{m: \upsilon_{m} \le t
\}$, which is the number of switches (i.e., jumps) of $\psi$ up  to  time $t$. Set $D:=D([0, \infty), \R^{d} \times \ss)$
and denote by ${\mathcal D}$ the usual $\sigma$-field of $D$. Likewise, for any $T>0$, set $D_{T}:=D([0, T], \R^{d} \times
\ss)$ and denote by ${\mathcal D}_{T}$ the usual $\sigma$-field of $D_{T}$. Moreover, denote by $\mu_{1} (\cdot)$ the
probability distribution induced by $(X,\La)$ and $\mu_{2} (\cdot)$ the probability distribution induced by $(V,\psi)$
in the path space $\bigl(D, {\mathcal D} \bigr)$, respectively. Denote  by $\mu_{1}^{T} (\cdot)$ the restriction of $\mu_{1}
(\cdot)$ and $\mu_{2}^{T} (\cdot)$ the restriction of $\mu_{2} (\cdot)$ to $\bigl(D_{T}, {\mathcal D}_{T} \bigr)$,
respectively. For any given $T>0$, from \cite[Lemma 4.2]{Xi-09}, we know that $\mu_{1}^{T} (\cdot)$ is absolutely
continuous with respect to $\mu_{2}^{T} (\cdot)$ and the corresponding Radon-Nikodym derivative has the following form.
\begin{equation}\label{(GFP8)}
\begin{array}{ll}
M_{T}\bigl(V(\cdot),\psi (\cdot) \bigr) \ad := \frac{\d \mu_{1}^{T}}{\d
\mu_{2}^{T}} \bigl(V(\cdot),\psi (\cdot) \bigr)\\
\aad = \prod_{i=0}^{n(T)-1}q_{\psi(\upsilon_{i})\psi(\upsilon_{i+1})}
\bigl(V(\upsilon_{i+1})\bigr)
%\ad \qquad \times
 \exp \biggl(-\sum_{i=0}^{n(T)}
\int_{\upsilon_{i}}^{\upsilon_{i+1}\wedge T}
\bigl[q_{\psi(\upsilon_{i})}(V(s))-2\kappa \bigr] \d s \biggr),
\end{array}\end{equation} where
$q_{k}(x)= \sum_{l \neq k}q_{kl}(x)$.

\begin{Remark}
 Note that the Radon-Nikodym derivative defined in \eqref{(GFP8)} is similar to the likelihood ratio martingale defined in \cite{ChowT-97} and \cite{RogersW-V1}.
\end{Remark}

We restate \cite[Lemmas 4.3 and 4.4]{Xi-09} as the following two lemmas
respectively.

\begin{Lemma}   \label{4.3}
For all $T>0$, we have that
\begin{equation}\label{(4.21)}
\E\left[ \left|M_{T}\bigl(V^{(x,k)}(\cdot),\psi^{(k)}(\cdot) \bigr)-M_{T}\bigl(V^{(y,k)}(\cdot),\psi^{(k)}(\cdot)
\bigr)\right| \right]\to 0
\end{equation}
as $|x-y| \to 0$.
\end{Lemma}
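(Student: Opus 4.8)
The plan is to reduce the claimed $L^{1}$ convergence to two ingredients: (i) convergence in probability of $M_{T}(V^{(x,k)}(\cdot),\psi^{(k)}(\cdot))$ to $M_{T}(V^{(y,k)}(\cdot),\psi^{(k)}(\cdot))$ as $x\to y$, and (ii) the normalization $\E[M_{T}(V^{(x,k)}(\cdot),\psi^{(k)}(\cdot))]=1$ valid for every starting point. The crucial structural observation is that $V^{(x,k)}$ and $V^{(y,k)}$ are built from the \emph{same} switching component $\psi^{(k)}$ and the same driving $B$ and $N$; consequently the switching count $n(T)$, the jump times $\upsilon_{0}<\upsilon_{1}<\cdots$, and the visited states $\psi(\upsilon_{i})$ appearing in \eqref{(GFP8)} are identical for the two processes. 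Thus the two values of $M_{T}$ differ only through the spatial arguments $V(\upsilon_{i+1})$ and $V(s)$ fed into the rate functions $q_{\psi(\upsilon_{i}),\psi(\upsilon_{i+1})}$ and $q_{\psi(\upsilon_{i})}$.

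First I would fix a realization of $\psi^{(k)}$ and regard $v\mapsto M_{T}(v,\psi^{(k)})$ as a functional on the path space $D_{T}$ equipped with the uniform (sup-norm) metric. Since $\psi^{(k)}$ has bounded jump rate $2\kappa$, we have $n(T)<\infty$ a.s., so this functional is a finite product of the factors $q_{\psi(\upsilon_{i}),\psi(\upsilon_{i+1})}(v(\upsilon_{i+1}))$ times $\exp(-\sum_{i}\int_{\upsilon_{i}}^{\upsilon_{i+1}\wedge T}[q_{\psi(\upsilon_{i})}(v(s))-2\kappa]\,\d s)$. Each rate function is Lipschitz in the spatial variable: \eqref{eq-q-Lip-new} yields both $|q_{kl}(x)-q_{kl}(y)|\le H|x-y|$ and $|q_{k}(x)-q_{k}(y)|\le H|x-y|$, while $q_{k}(x)\le H(k+1)<\infty$ by \eqref{q}. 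Because point evaluations $v\mapsto v(\upsilon_{i+1})$ and the time integrals $v\mapsto\int q_{\psi(\upsilon_{i})}(v(s))\,\d s$ are continuous under uniform convergence, the map $v\mapsto M_{T}(v,\psi^{(k)})$ is continuous in the uniform topology for a.e.\ realization of $\psi^{(k)}$. By Lemma \ref{FP2}, $\sup_{0\le t\le T}|V^{(x,k)}(t)-V^{(y,k)}(t)|\to 0$ in probability as $x\to y$. Combining this with the a.s.\ continuity just established, via the usual subsequence argument (extract an a.s.-convergent subsequence of the sup-norm distances and apply pathwise continuity of $M_{T}(\cdot,\psi^{(k)})$), yields $M_{T}(V^{(x,k)},\psi^{(k)})\to M_{T}(V^{(y,k)},\psi^{(k)})$ in probability.

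To upgrade convergence in probability to $L^{1}$ convergence I would invoke the nonnegativity of $M_{T}$ together with the normalization $\E[M_{T}(V^{(x,k)}(\cdot),\psi^{(k)}(\cdot))]=1$, valid for every $x$ because, by \cite[Lemma 4.2]{Xi-09}, $M_{T}$ is the Radon--Nikodym derivative $\d\mu_{1}^{T}/\d\mu_{2}^{T}$ and $\mu_{1}^{T}$ is a probability measure. Writing $f_{x}:=M_{T}(V^{(x,k)},\psi^{(k)})$, we then have $f_{x}\ge 0$, $f_{x}\to f_{y}$ in probability, and $\E f_{x}=1=\E f_{y}$. A Scheff\'e-type lemma now applies: since $(f_{y}-f_{x})^{+}\le f_{y}$ is dominated and tends to $0$ in probability, $\E[(f_{y}-f_{x})^{+}]\to 0$ by dominated convergence, and then $\E[(f_{x}-f_{y})^{+}]=\E[f_{x}-f_{y}]+\E[(f_{y}-f_{x})^{+}]\to 0$ because $\E f_{x}-\E f_{y}=0$; hence $\E|f_{x}-f_{y}|\to 0$, which is \eqref{(4.21)} (passing through arbitrary sequences $x\to y$).

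The main obstacle is precisely the passage from convergence in probability to $L^{1}$ convergence: $M_{T}$ is an \emph{unbounded} path functional — both the product of switching rates and the exponential of the integrated rates can be large — so no crude domination is available and ordinary bounded convergence fails. The device that resolves this is the Scheff\'e argument, which exploits the fact that the Radon--Nikodym derivative integrates to $1$ regardless of the initial spatial position, thereby converting the two equal means into the equi-integrability needed for $L^{1}$ closure. A secondary technical point is that the functional $M_{T}(\cdot,\psi^{(k)})$ is itself random through $\psi^{(k)}$, which is why the continuous-mapping step is executed along a.s.-convergent subsequences rather than by a deterministic modulus of continuity.
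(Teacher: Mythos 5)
Your proposal is correct, but it takes a genuinely different route from the one the paper relies on. The paper offers no argument of its own: it imports the lemma wholesale from \cite[Lemma 4.3]{Xi-09}, where the proof is a direct quantitative estimate --- one conditions on the number of switches $\{n(T)=m\}$ of $\psi^{(k)}$, bounds the difference of the two expressions in \eqref{(GFP8)} factor by factor using the Lipschitz property \eqref{eq-q-Lip-new} and the bound \eqref{q} (on that event the chain, making jumps of size at most $\kappa$, visits only states $\le k+m\kappa$, so every rate appearing in $M_T$ is bounded by $H(k+m\kappa+1)$), and then sums the resulting series against the Poisson tail $e^{-2\kappa T}(2\kappa T)^m/m!$, invoking the analogue of Lemma \ref{FP2} and dominated convergence. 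You replace this bookkeeping by a soft two-step scheme: your structural observation that $V^{(x,k)}$ and $V^{(y,k)}$ share the same autonomous $\psi^{(k)}$, $B$ and $N$ is exactly the coupling underlying Lemma \ref{FP2} and is legitimate because $\psi$ does not feel $V$; pathwise continuity of $v\mapsto M_T(v,\psi^{(k)})$ in the sup metric on the a.s.\ event $\{n(T)<\infty\}$ then yields convergence in probability, and Scheff\'e's device upgrades this to $L^1$ using only $\E\bigl[M_{T}\bigl(V^{(x,k)}(\cdot),\psi^{(k)}(\cdot)\bigr)\bigr]=1$ for every $x$. What Scheff\'e buys is that the uniform integrability, which the direct proof constructs by hand, comes for free from mass conservation; what the direct estimate buys is a quantitative modulus, Lemma \ref{4.4} as a byproduct, and a transparent explanation of why the finite-regime argument of \cite{Xi-09} survives in the countable-regime setting --- a point your argument bypasses entirely, since pathwise finiteness of $n(T)$ suffices, which is arguably an advantage here. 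Two points you should make explicit: first, the normalization $\E[M_T]=1$ is precisely the statement that $\mu_1^T$ has total mass one, so it silently uses the non-explosion of $(X,\La)$ from Theorem \ref{prop-EU1} together with the Radon--Nikodym property of \cite[Lemma 4.2]{Xi-09} (the paper's Lemma \ref{4.4} records only integrability); second, your Scheff\'e step requires the dominating function $f_y$ to be fixed, so the argument proves \eqref{(4.21)} with $y$ fixed and $x\to y$ --- the correct pointwise reading for the strong Feller application in \eqref{(4.26)}, but not a statement uniform over pairs with $|x-y|\to 0$.
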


\begin{Lemma}   \label{4.4}
For all $T>0$ and $(x,k) \in \R^{d} \times \ss$, $M_{T}\bigl(V^{(x,k)}(\cdot),\psi^{(k)} (\cdot) \bigr)$ is integrable.
\end{Lemma}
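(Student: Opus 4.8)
The plan is to read off integrability from the fact that $M_{T}$ is the Radon–Nikodym derivative $\d\mu_{1}^{T}/\d\mu_{2}^{T}$ recorded in \eqref{(GFP8)}, combined with the non-explosion of the target process established earlier. Since $M_{T}\ge 0$ (it is a product of nonnegative rates times a positive exponential), it suffices to bound $\E[M_{T}]$, the expectation being taken along the reference process $(V^{(x,k)},\psi^{(k)})$ whose law is $\mu_{2}^{T}$. First I would simplify the exponent in \eqref{(GFP8)}: because $\psi$ is constant on each interval $[\upsilon_{i},\upsilon_{i+1})$ and these intervals telescope to $[0,T]$, one has $\sum_{i=0}^{n(T)}\int_{\upsilon_{i}}^{\upsilon_{i+1}\wedge T}[q_{\psi(\upsilon_{i})}(V(s))-2\kappa]\,\d s = \int_{0}^{T} q_{\psi(s)}(V(s))\,\d s - 2\kappa T$, so that
\[
M_{T} = e^{2\kappa T}\Biggl(\prod_{i=0}^{n(T)-1} q_{\psi(\upsilon_{i})\psi(\upsilon_{i+1})}(V(\upsilon_{i+1}))\Biggr)\exp\Biggl(-\int_{0}^{T} q_{\psi(s)}(V(s))\,\d s\Biggr).
\]
This displays $M_{T}$ as the Dol\'eans--Dade exponential for changing the switching intensity of $\psi$ from the bounded rates $\wdh q_{kl}$ of \eqref{eq-qkl} to the state-dependent rates $q_{kl}(V(s))$.

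Then I would argue as follows. By \cite[Lemma 4.2]{Xi-09}, restated above, $\mu_{1}^{T}$ is absolutely continuous with respect to $\mu_{2}^{T}$ with density precisely $M_{T}$, where $\mu_{1}^{T}$ is the law on $(D_{T},\mathcal D_{T})$ of the genuine regime-switching jump diffusion $(X,\La)$ run up to time $T$. By Theorem \ref{prop-EU1} and Lemma \ref{lem-no-explosion}, $(X,\La)$ is conservative (it has no finite explosion time), so $\mu_{1}^{T}$ is a bona fide probability measure of total mass one. Consequently
\[
\E\bigl[M_{T}\bigl(V^{(x,k)}(\cdot),\psi^{(k)}(\cdot)\bigr)\bigr] = \int_{D_{T}} M_{T}\,\d\mu_{2}^{T} = \mu_{1}^{T}(D_{T}) = 1 < \infty,
\]
which is the assertion. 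If one prefers a route that does not lean on the internal workings of \cite[Lemma 4.2]{Xi-09}, the same conclusion follows by first verifying directly that $M_{T}$ is a nonnegative local martingale under $\mu_{2}^{T}$ (the standard likelihood ratio for a point-process intensity change) and then upgrading it to a true martingale via the non-explosion of the $q$-driven switching, so that $\E[M_{T}]=M_{0}=1$.

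The delicate point, and what distinguishes the countable-regime case from the finite-state setting of \cite{Xi-09}, is that the switching rates are only bounded by $q_{k}(x)\le H(k+1)$ in \eqref{q} rather than by a universal constant. Hence one \emph{cannot} simply dominate $M_{T}\le e^{2\kappa T}\bar q^{\,n(T)}$ and conclude from the finiteness of the moment generating function of $n(T)$, which is $\mathrm{Poisson}(2\kappa T)$ under $\mu_{2}^{T}$. The unbounded multiplicative factors $q_{\psi(\upsilon_{i})\psi(\upsilon_{i+1})}(V(\upsilon_{i+1}))$ are instead offset by the compensator $\exp(-\int_{0}^{T} q_{\psi(s)}(V(s))\,\d s)$, and the clean device capturing this cancellation is non-explosion: it is exactly what guarantees that the local martingale $M_{T}$ loses no mass, equivalently that $\mu_{1}^{T}$ is a probability measure. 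I therefore expect the main obstacle to be the careful justification that $\mu_{1}^{T}(D_{T})=1$ (the true-martingale property) in the presence of the unbounded rates, which is precisely what Lemma \ref{lem-no-explosion} supplies.
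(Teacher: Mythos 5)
Your proposal is correct, but it proves the lemma by a genuinely different route than the paper does: the paper offers no argument at all here, simply restating \cite[Lemma 4.4]{Xi-09}, whose original proof lives in the finite-regime setting where the rates $q_{kl}(\cdot)$ are uniformly bounded and one can dominate $M_{T}$ directly by a deterministic function of the Poisson-distributed switch count $n(T)$ --- exactly the crude bound you correctly observe must fail here, since \eqref{q} only gives $q_{k}(x)\le H(k+1)$ and the factor $\bigl(C\,n(T)\bigr)^{n(T)}$ outruns the $1/n!$ decay of the Poisson tail. Your argument instead reads integrability off the Radon--Nikodym identity \eqref{(GFP8)} itself: once $M_{T}=\d\mu_{1}^{T}/\d\mu_{2}^{T}$ is granted, one has
\begin{displaymath}
\E\bigl[M_{T}\bigl(V^{(x,k)}(\cdot),\psi^{(k)}(\cdot)\bigr)\bigr]=\int_{D_{T}}M_{T}\,\d\mu_{2}^{T}=\mu_{1}^{T}(D_{T})=1,
\end{displaymath}
where the last equality is precisely conservativeness of $(X,\La)$, supplied by Theorem \ref{prop-EU1} and Lemma \ref{lem-no-explosion}; this is the only nontrivial input, and it is the ingredient that actually adapts the lemma to countably many regimes. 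What your route buys is robustness to the unbounded rates and a proof that is self-contained modulo \cite[Lemma 4.2]{Xi-09}, which this paper already invokes to state \eqref{(GFP8)}; what the paper's citation buys is brevity, at the cost of silently importing a finite-state proof into a setting where its domination step breaks. One caveat you should flag explicitly: if \cite[Lemma 4.2]{Xi-09} is itself proved Girsanov-style, by first showing $M_{T}$ is a $\mu_{2}^{T}$-martingale and then constructing $\mu_{1}^{T}$ from it, then deriving Lemma \ref{4.4} from Lemma 4.2 would invert the logical order in the source; within the present paper this is not circular, because the strong uniqueness of \eqref{eq:X}--\eqref{eq:La} identifies $\mu_{1}^{T}$ independently as the law of the non-explosive process constructed in Section \ref{sect:ExisUniq}, but the dependence is worth stating. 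Your fallback sketch (nonnegative local martingale upgraded to a true martingale) is also sound, though the upgrading step is not automatic and would again rest on Lemma \ref{lem-no-explosion}, so the measure-mass argument is the cleaner of your two variants.
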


% Finally, we prove our main result.
Now we are ready to prove the main result of this section.

\begin{proof}[Proof of Theorem  \ref{thm:str-Feller}] To prove the desired strong
Feller property, it is enough to prove that for any $t>0$ and any
bounded measurable function $f$ on $\R^{d} \times \ss$, $\E[
f(X^{(x,k)}(t),\La^{(x,k)}(t))]$ is bounded continuous in both $x$
and $k$. Since $\ss$ has a discrete metric, it is
sufficient to prove that
\begin{equation}\label{(GFP16)}
\abs{\E[ f(X^{(x,k)}(t),\La^{(x,k)}(t))]- \E
[f(X^{(y,k)}(t),\La^{(y,k)}(t))]} \to 0
\end{equation} as $|x-y| \to 0$. Indeed, by (\ref{(GFP8)}), for all $(x,k) \in \R^{d} \times \ss$,
\begin{equation}\label{(4.24)} \E[ f(X^{(x,k)}(t),\La^{(x,k)}(t))] =\E\big[
f(V^{(x,k)}(t),\psi^{(k)}(t))\cdot
M_{t}\bigl(V^{(x,k)}(\cdot),\psi^{(k)}(\cdot) \bigr)\bigr]. \end{equation} Similarly
to the proof of Proposition 1.2 in \cite{Wu-01}, for any given
$\varepsilon >0$, using (\ref{(4.24)}), we have \begin{equation}\label{(4.26)}
\begin{aligned} \displaystyle
& \bigl|\E[ f(X^{(x,k)}(t),\La^{(x,k)}(t))]-\E[ f(X^{(y,k)}(t),\La^{(y,k)}(t))]\bigr|\\ &
\quad \le \displaystyle \E
\Bigl[\Bigl|f(V^{(x,k)}(t),\psi^{(k)}(t))\cdot
M_{t}\bigl(V^{(x,k)}(\cdot),\psi^{(k)} (\cdot) \bigr)\\
&
\quad \quad \quad \quad - \displaystyle f(V^{(y,k)}(t),\psi^{(k)}(t))\cdot M_{t}\bigl(V^{(y,k)}(\cdot),\psi^{(k)}
(\cdot) \bigr)\Bigr|\Bigr]\\
& \quad \le \displaystyle \|f\| \cdot \E \left[ \left|M_{t}\bigl(V^{(x,k)}(\cdot),\psi^{(k)} (\cdot) \bigr)-
M_{t}\bigl(V^{(y,k)}(\cdot),\psi^{(k)}
(\cdot) \bigr)\right| \right]\\
& \quad \quad + \displaystyle 2 \|f\|\cdot \E\Big[M_{t}\bigl(V^{(y,k)}(\cdot),\psi^{(k)} (\cdot) \bigr) I_{\{ |f(V^{(x,k)}(t),\psi^{(k)} (t))-f(V^{(y,k)}(t),\psi^{(k)} (t))| \ge \e\}}\Big]\\
     % \cdot \int_{(|f(V^{(x,k)}(t),\psi^{(k)} (t))-f(V^{(y,k)}(t),\psi^{(k)} (t))| \ge \varepsilon)} M_{t}\bigl(V^{(y,k)}(\cdot),\psi^{(k)} (\cdot) \bigr) d\P \\
& \quad \quad + \displaystyle \varepsilon \cdot \E
\big[M_{t}\bigl(V^{(y,k)}(\cdot),\psi^{(k)} (\cdot) \bigr)\big]\\
& \quad = \displaystyle \hbox{(I)}+ \hbox{(II)}+ \hbox{(III)},
\end{aligned}\end{equation} where $\|f\|:=\sup \{|f(x,k)|: (x,k) \in \R^{d} \times \ss\}$.
From Lemma \ref{4.3} term (I) in (\ref{(4.26)})
tends to zero as $|x-y| \to 0$. From Lemmas \ref{lem:GFP6} and \ref{4.4}, we derive that term (II) in (\ref{(4.26)})
also tends to zero as $|x-y| \to 0$. Meanwhile, term (III) in (\ref{(4.26)}) can be arbitrarily small since the
multiplier $\varepsilon$ is arbitrary and $M_{t}\bigl(V^{(y,k)}(\cdot),\psi^{(k)} (\cdot) \bigr)$ is integrable by
Lemma~\ref{4.4}. The proof is completed. \end{proof}

%%%%%%%%%%%%%%%%%%%%%%%%%%%%%%%%%%%%%%%%%%%%%%%%%%%%%%%%%%%%
\section{Exponential Ergodicity}\label{sect:exp-ergodicity}

In this section, we follow \cite{Xi-09} and investigate the exponential ergodicity for the process $(X,\La)$. To this end, let us first  recall some relevant terminologies. As in \cite{MeynT-93II}, the process $(X,\La)$ is called {\em bounded in probability on average} if for each $(x,k)\in \R^{d}\times \ss$ and each $\e > 0$, there exists a compact subset $C \subset \R^{d}$ and a finite subset $N \subset \ss$ such that
\begin{displaymath}
\liminf_{t\to\infty} \frac{1}{t}\int_{0}^{t} P(s,(x,k), C\times N)\d s \ge 1-\e.
\end{displaymath}
We now introduce a Foster-Lyapunov drift condition as follows. For some $\alpha, \beta > 0$,  $f(x,k) \ge 1$, a compact subset $C \subset \R^{d}$ and a finite subset $N \subset \ss$, and a nonnegative function $V(\cdot, \cdot) \in C^{2}(\R^{d}\times \ss)$,
\begin{equation}
\label{eq-FL-drift}
\A V(x,k) \le -\alpha f(x,k) + \beta \one_{C\times N}(x,k), \qquad (x,k) \in \R^{d}\times \ss,
\end{equation}
where $\A$ is the operator defined in \eqref{eq-operator}.

\begin{Proposition}\label{prop-bdd-in-prob}
Suppose \eqref{eq-FL-drift} and Assumptions \ref{I1},  \ref{qH}, \ref{qkappa}, \ref{StFP1}, and  \ref{assumption-finite-range}  hold. Then the process $(X,\La)$ is bounded in probability on average and possesses an invariant probability $\pi$.
\end{Proposition}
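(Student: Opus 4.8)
The plan is to prove the two assertions separately, using the Foster--Lyapunov inequality \eqref{eq-FL-drift} as the sole quantitative input, together with the non-explosion result of Lemma \ref{lem-no-explosion} and the Feller property of Theorem \ref{thm-Feller}. First I would convert the pointwise drift bound into a time-averaged occupation estimate. Since $V\ge 0$ lies in $C^{2}$ but need not be compactly supported and $\A$ contains nonlocal (jump and switching) terms, I would apply the Dynkin formula of Proposition \ref{prop-dynkin} through a localization along the exit times $T_{n}:=\inf\{t\ge0:|X(t)|\vee\La(t)\ge n\}$ of Lemma \ref{lem-no-explosion}, obtaining
$$\E_{x,k}[V(X(t\wedge T_{n}),\La(t\wedge T_{n}))]=V(x,k)+\E_{x,k}\Bigl[\int_{0}^{t\wedge T_{n}}\A V(X(s),\La(s))\,\d s\Bigr].$$
Substituting \eqref{eq-FL-drift}, using $V\ge0$ and $\one_{C\times N}\le1$, and then letting $n\to\infty$ (justified by $T_{n}\to\infty$ a.s.\ and $f\ge0$ via monotone convergence) yields
$$\alpha\,\E_{x,k}\Bigl[\int_{0}^{t}f(X(s),\La(s))\,\d s\Bigr]\le V(x,k)+\beta t,$$
so that $\limsup_{t\to\infty}\frac1t\,\E_{x,k}[\int_{0}^{t}f\,\d s]\le\beta/\alpha$.

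Next I would turn this into tightness of the averaged occupation measures $\pi_{t}(\cdot):=\frac1t\int_{0}^{t}P(s,(x,k),\cdot)\,\d s$, which is exactly boundedness in probability on average. From $\one_{\{f>M\}}\le f/M$ and Markov's inequality, $\limsup_{t\to\infty}\pi_{t}(\{f>M\})\le\beta/(\alpha M)$. Since $f$ is norm-like (its sublevel sets $\{f\le M\}$ are contained in $C_{M}\times N_{M}$ for some compact $C_{M}\subset\R^{d}$ and finite $N_{M}\subset\ss$), choosing $M>\beta/(\alpha\e)$ gives $\liminf_{t\to\infty}\pi_{t}(C_{M}\times N_{M})\ge1-\e$, which is the required property.

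Finally, for the invariant probability I would run the Krylov--Bogolyubov argument on the Polish (locally compact, separable) space $\R^{d}\times\ss$. Tightness of $\{\pi_{t}\}$ furnishes, by Prokhorov's theorem, a sequence $t_{n}\to\infty$ with $\pi_{t_{n}}$ converging weakly to a probability measure $\pi$. Writing $P_{s}g(x,k):=\E_{x,k}[g(X(s),\La(s))]$, the Feller property of Theorem \ref{thm-Feller} guarantees $P_{s}g\in C_{b}(\R^{d}\times\ss)$ for every $g\in C_{b}(\R^{d}\times\ss)$, and a direct computation gives
$$\int P_{s}g\,\d\pi_{t_{n}}-\int g\,\d\pi_{t_{n}}=\frac{1}{t_{n}}\Bigl[\int_{t_{n}}^{t_{n}+s}P_{r}g\,\d r-\int_{0}^{s}P_{r}g\,\d r\Bigr],$$
whose absolute value is at most $2s\|g\|_{\infty}/t_{n}\to0$. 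Passing to the limit along $t_{n}$ yields $\int P_{s}g\,\d\pi=\int g\,\d\pi$ for all $g\in C_{b}$ and $s\ge0$, i.e.\ $\pi$ is invariant.

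The localization in Dynkin's formula and the Markov bound are routine; the step requiring the most care is the Krylov--Bogolyubov passage to the limit, since it is precisely here that the Feller property of Theorem \ref{thm-Feller} is indispensable, ensuring $P_{s}g$ is continuous so that weak convergence of $\pi_{t_{n}}$ can be exploited on both sides. The one structural input I rely on is that the Lyapunov function $f$ in \eqref{eq-FL-drift} is norm-like (coercive); this is the standard content of a Foster--Lyapunov function and is what allows the occupation bound to confine the process to a compact-times-finite set, and without it the estimate $\limsup_{t\to\infty}\frac1t\E_{x,k}[\int_0^t f\,\d s]\le\beta/\alpha$ would not yield tightness.
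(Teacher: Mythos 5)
There is a genuine gap, and it sits exactly where you flagged it: you assume the function $f$ in \eqref{eq-FL-drift} is norm-like, but the drift condition as stated grants only $f(x,k)\ge 1$ together with the compact set $C$ and finite set $N$ appearing in the indicator. Nothing forces the sublevel sets $\{f\le M\}$ to be contained in a compact-times-finite set; indeed $f\equiv 1$ is admissible (the condition then reads $\A V\le-\alpha+\beta\one_{C\times N}$), and in that case your occupation estimate $\limsup_{t\to\infty}\frac1t\E_{x,k}\bigl[\int_0^t f\,\d s\bigr]\le\beta/\alpha$ carries no confinement information whatsoever, so the tightness of $\pi_t$ and the whole Krylov--Bogolyubov chain collapse. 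Your proof therefore establishes the proposition only under an additional coercivity hypothesis that the statement does not provide. (A secondary, repairable point: Proposition \ref{prop-dynkin} is stated for $f\in C^2_c$, and since $\A$ is nonlocal, extending the Dynkin formula to an unbounded $V\in C^2$ needs a truncation argument even after stopping at $T_n$ --- the jump and switching terms evaluate $V$ at points outside the stopped region; this is routine but should be said.)

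The paper's proof takes a different route that works for arbitrary $f\ge1$: by Theorem \ref{thm:str-Feller} the process is \emph{strong} Feller, hence a $T$-process in the sense of Meyn and Tweedie; Theorem \ref{prop-EU1} gives non-explosion; and then Theorems 4.7 and 4.5 of \cite{MeynT-93III} deliver, respectively, boundedness in probability on average and the invariant probability $\pi$ directly from the (CD2)-type drift condition \eqref{eq-FL-drift}. The idea you are missing is that when $f$ is not coercive, \eqref{eq-FL-drift} still yields a finite mean hitting time of the compact set $C\times N$ (roughly $\E_{x,k}[\tau_{C\times N}]\le V(x,k)/\alpha$), and it is the $T$-process property --- not tightness of occupation measures --- that upgrades recurrence of a single compact set to boundedness in probability on average and existence of $\pi$. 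This also explains a structural tell in your proposal: you invoke only the weak Feller property of Theorem \ref{thm-Feller} and never use Assumptions \ref{StFP1} and \ref{assumption-finite-range}, whereas those assumptions appear in the proposition precisely because the paper's argument routes through the strong Feller property. To prove the proposition as stated, either follow the paper's citation path, or strengthen your hypothesis on $f$ --- but then you are proving a different (incomparable) statement.
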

\begin{proof} By Theorem \ref{thm:str-Feller}, the process $(X,\Lambda)$ is strong Feller and hence a $T$-process in the terminology of \cite{MeynT-93III}. In addition, Proposition \ref{prop-EU1}  indicates that $(X,\Lambda)$ is non-explosive. Therefore Theorem 4.7 of  \cite{MeynT-93III} implies that $(X,\La)$ is bounded in probability on average. The assertion that $(X,\La)$ possesses an invariant probability $\pi$ is a direct consequence of  \cite[Theorem 4.5]{MeynT-93III}.
\end{proof}

%Let $V$ be as in  \eqref{eq-FL-drift}. Let $\tau_{n}: = \inf\{ t\ge 0: |X(t)| \vee \La(t) \ge n\}$ for each $n \in \mathbb N$.  Applying It\^o's formula to $ V(X(t\wedge \tau_{n}),\La(t\wedge \tau_{n}))$ and using \eqref{eq-FL-drift} yield
%\begin{align*}
%  V&(X(t\wedge \tau_{n}),\La(t\wedge \tau_{n})) \\& = V(x,k) + \int_{0}^{t\wedge \tau_{n}}   \A   V(X(s),\La(s)) \d s + M(t\wedge \tau_{n}),  \\
% & \le V(x,k) +  \int_{0}^{t\wedge \tau_{n}} [   -\alpha f(X(s),\La(s)) + \beta\one_{C\times N}(X(s),\alpha(s))]\d s + M(t\wedge \tau_{n}),
%\end{align*} where $M(\cdot\wedge \tau_{n})$ is a   martingale.   Taking expectations on both sides and rearranging the terms lead to
%\begin{displaymath}\begin{aligned}
%\E&_{x,k}  \biggl[\int_{0}^{t\wedge \tau_{n}}  \one_{C\times N}(X(s),\alpha(s))\d s\biggr] \\ & \ge \frac1\beta \E_{x,k}[V(X(t\wedge \tau_{n}),\La(t\wedge \tau_{n}))]  + \frac{\alpha}{\beta} \E_{x,k}\bigg[ \int_{0}^{t\wedge \tau_{n}} f(X(s),\La(s)) \d s \biggr] - \frac{1}{\beta} V(x,k).
%\end{aligned}\end{displaymath}     Since $V$ and $f$  are nonnegative, we can use the monotone convergence theorem and Fatou's lemma to derive
%\begin{displaymath}
% \frac{1}{t}\int_{0}^{t} P(s,(x,k), C\times N)\d s  \ge  \frac{1}{t \beta } \E_{x,k}  \biggl[ V(X(t ),\La(t))  +  \alpha   \int_{0}^{t} f(X(s),\La(s)) \d s   -   V(x,k)\biggr]
%\end{displaymath}

For any positive function $f: \R^{d}\times \ss\mapsto [1,\infty)$ and any signed measure $\nu$ defined on $\B(\R^{d}\times \ss)$, we write
\begin{displaymath}
\|\nu\|_{f} : = \sup\{|\nu(g)|: g \in \B(\R^{d}\times \ss) \text{ satisfying } |g| \le f \},
\end{displaymath} where $\nu(g) : = \sum_{l\in\ss}\int_{\R^{d}} g(x,l)\nu(\d x,l)$ is the integral of the function $g$ with respect to the measure $\nu$.
Note that the usual total variation norm $\|\nu\|$ is just $\|\nu\|_{f}$ in the special case when $f\equiv 1$. For a function $\infty > f \ge 1 $ on $\R^{d}\times \ss$, the process $(X,\La)$ is said to {\em $f$-exponentially ergodic}  if there exists a probability measure $\pi(\cdot) $, a constant $\theta  \in (0,1)$ and a finite-valued function $\Theta(x,k)$ such that
\begin{equation}
\label{eq-exp-ergodicity-defn}
\norm{P(t,(x,k),\cdot) - \pi(\cdot)}_{f} \le \Theta(x,k) \theta^{t}
\end{equation}
for all $t\ge 0 $ and all $(x,k) \in \R^{d}\times \ss$.

% Using similar   arguments as those in \cite{Xi-09}, we can prove the following result:
We need the following assumption:
\begin{Assumption}\label{Assumption-irreducibility}
 For any distinct $k, l \in \ss$, there
exist $k_{0}$, $k_{1}$, $\cdots$, $k_{r}$ in $\ss$ with $k_{i} \ne
k_{i+1}$, $k_{0}=k$ and $k_{r}=l$ such that the set $\{x\in \R^{d}:
q_{k_{i}k_{i+1}}(x)>0\}$ has positive Lebesgue measure for $i=0, 1, \dots, r-1$.
\end{Assumption}
\begin{Theorem}\label{thm-exp-ergodicity}
Suppose Assumptions \ref{I1},  \ref{qH}, \ref{qkappa},  \ref{StFP1},    \ref{assumption-finite-range}, and \ref{Assumption-irreducibility}  hold. In addition, assume that there exist positive numbers $\alpha, \gamma$ and a nonnegative function $V\in C^{2}( \R^{d}\times\ss) $ satisfying
\begin{itemize}
  \item[(i)] $V(x,k)\to \infty$ as $|x|\vee k \to \infty$, % for each $k \in \ss$,\chao{In response to Rev 1's second comment, I suspect that we do not need   condition (i). Condition (CD3) in \cite{MeynT-93III} requires the function $V$ to be norm-like. Rev 1 thinks that our function $V$ here must satisfy $V(x,k) \to \infty$ as $|x| \vee k \to \infty$ as well. But after going over the proof of Theorem 6.1 of \cite{MeynT-93III}, I think condition (ii) alone (we still need to have   \eqref{eq-FL-drift}) will help us to derive the estimate $$\|P(t, (x,k), \cdot )- \pi \|_{f} \le R \rho^{n} (V(x,k) + \gamma/\alpha + 1),$$ here $\rho < 1$ and $R  < \infty$. Notice that  (CD3) in \cite{MeynT-93III} uses $\A_{m}$ but we are using $\A$ directly. I will try to go over the details tomorrow to make sure that I am not making a silly mistake again.} \fubao{I think that Rev 1 is right and the function $V$ should be norm-like. If so, theLyapunov function $V(x,i) = (1-p c_i) |x|\sp p$ in Proposition \ref{linear} will be incorrect. This will make some troubles. Not worry too much. Let us think about these issues carefully.}
  \item[(ii)] $\A V(x,k) \le -\alpha V(x,k) + \gamma$, $(x,k) \in \R^{d}\times \ss$.

\end{itemize} Then the process $(X,\La)$ is $f$-exponentially ergodic with $f(x,k)= V(x,k) +1$.
%\chao{Do we need to assume that the component $\La$ is irreducible? Intuitively, if $\La$ is a reducible Markov chain, then $(X,\La)$ can not be irreducible. It seems that you did not assume such a condition in  \cite[Theorem 6.3]{Xi-09}?}
% \fubao{You are right. We did not assume such a condition there, but we assumed that $0<q_{kl}(x)<\infty$. It can prove the the desired irreducibility. Here, we can make {\bf some condition} (but not the direct $0<q_{kl}(x)<\infty$) to guarantee the irreducibility of $\La$. After this we then prove the irreducibility for the whole process $(X,\La)$.}
\end{Theorem}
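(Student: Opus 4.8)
The plan is to verify the hypotheses of the exponential ergodicity criterion for continuous-time Markov processes (as used in \cite{Xi-09}; cf. \cite{MeynT-93III}), which asserts that a $\psi$-irreducible, aperiodic $T$-process satisfying a geometric Foster--Lyapunov drift condition $\A W \le -c W + b\one_{C}$ for constants $c,b>0$, a function $W\ge 1$, and a petite set $C$, is $W$-exponentially ergodic. Three ingredients must be assembled: that $(X,\La)$ is a $\psi$-irreducible aperiodic $T$-process, that precompact subsets of $\R^{d}\times\ss$ are petite, and that condition~(ii) can be recast in the required geometric drift form with $W=f=V+1$.

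First I would record the structural properties already at hand. By Theorem~\ref{thm:str-Feller} the process $(X,\La)$ is strong Feller, hence a $T$-process in the sense of \cite{MeynT-93III}, and by Theorem~\ref{prop-EU1} it is non-explosive. To establish $\psi$-irreducibility I would combine Assumption~\ref{Assumption-irreducibility} with the transition-density property of Lemma~\ref{StFP3}: starting from $(x,k)$, Assumption~\ref{Assumption-irreducibility} supplies a finite chain $k=k_{0},k_{1},\dots,k_{r}=l$ along which the rates $q_{k_{i}k_{i+1}}(\cdot)$ are positive on sets of positive Lebesgue measure, while the density of each jump diffusion $X^{(k_{i})}$ allows the continuous component to be steered into any prescribed Borel set of positive Lebesgue measure before each switch. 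Concatenating these transitions shows that every set of positive $\mu$-measure (with $\mu$ the reference measure of Lemma~\ref{thm:GFP4}) is reached with positive probability, so $(X,\La)$ is $\mu$-irreducible; the continuous-time, density-driven nature of the dynamics then delivers aperiodicity. For an irreducible $T$-process the theory of \cite{MeynT-93III} guarantees that every compact (indeed every precompact) subset of $\R^{d}\times\ss$ is petite.

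Next I would convert condition~(ii) into the required drift inequality. Setting $W:=V+1=f\ge 1$ and using that $\A$ annihilates constants, I have
\[
\A W = \A V \le -\alpha V + \gamma = -\alpha W + (\alpha+\gamma).
\]
Let $C:=\{(x,k): W(x,k) < 2(\alpha+\gamma)/\alpha\}$. By hypothesis~(i) the function $W$ is norm-like, so $C$ is contained in a set of the form $\{|x|\le R\}\times\{k\le R\}$ and is therefore precompact, hence petite. On $C^{c}$ one has $-\alpha W + (\alpha+\gamma)\le -\tfrac{\alpha}{2}W$, while on $C$ the remaining term is bounded by $\alpha+\gamma$; combining the two cases yields
\[
\A W \le -\tfrac{\alpha}{2} W + (\alpha+\gamma)\one_{C},
\]
which is exactly the geometric drift condition with $c=\alpha/2$, $b=\alpha+\gamma$, and petite set $C$.

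Finally, feeding the $T$-process property, irreducibility, aperiodicity, petiteness, and the drift inequality into the exponential ergodicity theorem produces a constant $\theta\in(0,1)$ and a finite function $\Theta$ with $\norm{P(t,(x,k),\cdot)-\pi(\cdot)}_{f}\le \Theta(x,k)\,\theta^{t}$, where $\pi$ is the invariant probability from Proposition~\ref{prop-bdd-in-prob}; this is precisely the asserted $f$-exponential ergodicity. I expect the main obstacle to be the irreducibility step: one must check carefully that the switching reachability granted by Assumption~\ref{Assumption-irreducibility} can be synchronized with the density-based reachability of the $X$-component so that genuinely two-dimensional target sets $A\times\{l\}$ of positive $\mu$-measure are attained, and, if necessary, that the underlying densities may be taken positive (cf. Remark~\ref{StFP3a}) in order to exclude periodic behavior.
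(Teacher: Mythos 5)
Your proposal is correct and follows essentially the same route as the paper: the strong Feller property of Theorem \ref{thm:str-Feller} yields the $T$-process property, irreducibility is obtained by chaining the switches permitted by Assumption \ref{Assumption-irreducibility} with the positivity of the jump-diffusion transition probabilities (the paper formalizes your concatenation sketch as the explicit series expansion \eqref{(FP22)} over the killed semigroups $P^{(l)}$ interlaced with the rates $q_{l_i l_{i+1}}$), and the drift condition (ii) is fed into Theorem 6.1 of \cite{MeynT-93III}. The only notable presentational difference is that the paper secures petiteness of compact sets for the skeleton chain $\{(X(nh),\La(nh))\}$ via the argument of Theorem 6.3 of \cite{Xi-09}, which is exactly the form required by the continuous-time ergodicity theorem and settles the aperiodicity point you rightly flagged as the delicate step.
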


\begin{proof} Note that the existence of $V$ satisfying (i) and (ii) in the statement of the theorem trivially leads to   \eqref{eq-FL-drift},  and hence, together with the other assumptions of the theorem,  the conclusions of Proposition \ref{prop-bdd-in-prob}.
We next show that the process $(X,\La)$ is irreducible in the sense that for any $t>0$, $(x,k) \in \R^{d}\times \ss$, $A \in \B(\R^{d})$ with positive Lebesgue measure, and  $ l \in \ss$, we have
$P(t,(x,k), A\times \{l\}) > 0 $. To this end,   for each $k\in \ss$, we kill the L\'evy process $X^{(k)}$ of  \eqref{(EU1)} with killing rate $q_{k}(\cdot)$. Denote by $P^{(k)} (t,x,\cdot)$ the transition probability of the killed process. Then we have
\begin{align}  \nonumber P & (t,(x,k),A\times \{l\}) \\ \nonumber
 & =\delta_{kl} P^{(k)}(t,x,A)+\sum_{m=1}^{+\infty} \ \ \idotsint\limits_{0<t_{1}<t_{2}<\cdots
<t_{m}<t}
  \sum_{{l_{0}, l_{1}, l_{2}, \cdots, l_{m} \in
\ss}\atop{l_{i}\neq l_{i+1}, l_{0}=k, l_{m}=l}}\int_{\R^{d}} \cdots
\int_{\R^{d}}P^{(l_{0})}(t_{1},x,\d y_{1}) q_{l_{0}l_{1}}(y_{1})\\ \label{(FP22)}
&\ \quad \times  P^{(l_{1})}(t_{2}-t_{1},y_{1},\d y_{2})\cdots
q_{l_{m-1}l_{m}}(y_{m})P^{(l_{m})}(t-t_{m},y_{m},A) \d t_{1} \d t_{2}
\cdots \d t_{m},
\end{align}  where $\delta_{kl}$ is the Kronecker
symbol in $k$, $l$, which equals $1$ if $k=l$ and  $0$ if $k\neq
l$. As argued in \cite{Xi-09}, Assumption \ref{StFP1} guarantees that each term of the form $P^{(l)} (s,x, A)$ with $l\in \ss, s > 0$ and $ x\in \R^{d}$ is positive; this, together with Assumption  \ref{Assumption-irreducibility}, implies that $P(t,(x,k), A\times\{l\}) > 0$.

Using the same argument as that in the proof of Theorem 6.3 of \cite{Xi-09}, we can   show that all compact subsets of $\R^{d}\times \ss$ are petite for the skeleton $\{X(nh),\La(nh)), n\ge 0\}$. Then the desired $f$-exponential ergodicity follows from   Theorem 6.1 in \cite{MeynT-93III}.\end{proof}

\begin{Example}
In this example, we consider a coupled one-dimensional Ornstein-Uhlenbeck process
\begin{equation}
\label{eq-OU}
\d X(t) = \alpha(\La(t) )X(t)\d t + \sigma(\La(t) ) \d B(t) + \int_{\R\setminus\{0\}} \beta(\La(t-)) z N(\d t, \d z),
\end{equation}
where   for each $k \in \ss = \{0, 1, 2, \dots\}$, $\alpha_{k}:=\alpha(k),  \beta_{k}:=\beta(k)$, and  $\sigma_{k}:=\sigma(k)$ are real numbers to be determined later, $B$ is a standard one-dimensional Brownian motion,  and $N(\d t, \d z)$ is Poisson random measure with characteristic measure $\nu(\d z) = \frac{1}{2}e^{-|z|} \d z$. Suppose the switching component $\La$ is generated by the $q$-matrix
\begin{equation}
\label{eq-Q(x)-OU-example}
Q(x) = \begin{pmatrix}
    -q_{01}(x) & q_{01}(x)         & 0  & 0 & 0  & 0 &  \dots \\
        q_{10}(x)  &  -(q_{10}(x) + q_{12}(x) )&       q_{12}(x)     &       0       & 0  &    0  &   \dots\\
           0            &  q_{21}(x) & - (q_{21}(x) + q_{23}(x) ) &  q_{23}(x)  & 0  &   0 &    \dots \\
   %    0 &    0 &  q_{32}( x) &   -(q_{32}(x) + q_{34}(x))        &  q_{34}(x) & 0 &   0 &0 &  \dots\\
     %0 &     0        &      q_{43}(x)      &  - (q_{43}(x) + q_{45}(x)) & q_{45}(  x) & 0  & 0 & \dots \\
    %  0 &   0 & 5+\cos x &      6       & 1-\cos x & 0 &   0 & \dots\\
     \vdots & \vdots& \vdots & \vdots & \vdots & \vdots &  \ddots
     \end{pmatrix},
\end{equation}  where $q_{k,k-1}(x) $ and $q_{k,k+1}(x)$ are positive and Lipschitz    continuous functions.  Obviously, Assumptions \ref{I1},  \ref{qH}, \ref{qkappa},  \ref{StFP1}, \ref{assumption-finite-range}, and \ref{Assumption-irreducibility}  hold.

Let us consider the functions $V(x,k) = (k+1)x^{2}$ for $(x,k) \in \R\times \ss$. Then we have
\begin{align*}
\A V(x,0)  & = [2 \alpha_{0}- q_{01}(x) + 2q_{01}(x)] x^{2}  + \sigma_{0}^{2}+ \int_{\R\setminus\{0\}} [(x+ \beta_{0} z)^{2} - x^{2}] \nu(\d z) \\
 &  =  [2 \alpha_{0}+  q_{01}(x) ] x^{2} + \sigma_{0}^{2} +  4 \beta_{0}^{2},    %\int_{\R_{0}}   z^{2} e^{-|z|}\d z,
\end{align*}
and for $k =1, 2, 3,\dots$,
\begin{align*}
 \A V(x,k) & = [2 (k+1)\alpha_{k} +k q_{k,k-1} (x)  -(k+1) (q_{k,k-1} (x)+ q_{k,k+1}(x) )+ (k+2) q_{k,k+1}(x) ]x^{2}\\
 & \qquad + (k+1) \sigma_{k}^{2}  + \int_{\R_{0}} [(k+1)(x+ \beta_{k} z)^{2} - (k+1) x^{2}] \nu(\d z)\\
 &  = [2 (k+1)\alpha_{k}  -q_{k,k-1} (x) + q_{k,k+1}(x) ]x^{2} + (k+1) \sigma_{k}^{2}  +  4 (k+1) \beta_{k}^{2}. %  \int_{\R_{0}}  z^{2} e^{-|z|}\d z.
\end{align*}

Now % we choose $\alpha_{k}, \beta_{k}, \sigma_{k}$ and $q_{k,k-1} (x), q_{k,k+1} (x)$
suppose there exist positive constants $K_{1}$ and $K_{2}$ so that the following conditions are satisfied:
\begin{itemize}
  \item[(a)] $2 \alpha_{0}+  q_{01}(x)  \le - K_{1} < 0 $,
  \item[(b)] for each $k \in \ss$, we have $\sigma_{k} > 0$,  and $  (k+1) \sigma_{k}^{2}  +  4 (k+1) \beta_{k}^{2} \le K_{2}< \infty$,
  \item[(c)] for all $k\in \ss\setminus\{0\}$, we have  $2 (k+1) \alpha_{k}  -q_{k,k-1} (x) + q_{k,k+1}(x)  \le -K_{1} (k+1) < 0$. %\chao{To obtain a possibly interesting example, we require:   $\alpha_{1} < 0$ (so the first jump diffusion $X^{(1)}$ is exponentially ergodic) and $q_{k,k-1}(x)- q_{k,k+1}(x)   \simeq C(k+1) $ with $C > 2 \alpha_{k}>0$ for $k =1, 2,3,\dots$ (so the other jump diffusions $X^{(k)}$ are not). In such a case, except $X^{(0)}$, all the other  individual jump diffusions $X^{(k)}$ are not exponentially ergodic. Thanks to the coupling effects, the regime-switching jump diffusion $X$ is $f$-exponentially ergodic.  %But then Assumption   \ref{qH} is not satisfied. Maybe Assumption   \ref{qH}, in particular \eqref{q}, is too strong? }
\end{itemize} %  Note that  $\int_{\R_{0}}   z^{2} e^{-|z|}\d z = 4$.
Then it follows that for all $(x,k) \in \R\times \ss$, we have $$\A V(x,k)  \le -K_{1}  (k+1) x^{2} + K_{2}= -  K_{1} V(x,k) +  K_{2}. $$ This verifies conditions (i) and (ii) of Theorem \ref{thm-exp-ergodicity}. Hence we conclude that the process $X$ of \eqref{eq-OU}
is $f$-exponentially ergodic.

Note that we can choose $\alpha_{k}, \beta_{k}, \sigma_{k}$ and $Q(x)$ so that: (i) $X^{(0)}$ is exponentially ergodic, (ii) $X^{(k)}$ is transient for $k =1, 2, \dots$, but (iii) the process $(X,\La)$ of \eqref{eq-OU}
is $f$-exponentially ergodic.   \end{Example}

To proceed, we assume in the rest of the section that
\begin{Assumption}\label{assumption-infty}
For each $i \in \ss$, there exist $b(i)$, $\sigma_j(i)\
\in \R^{d\times d}$,  $ j=1,2,\dots,d$,
 %  and $\wdh Q=(\wdh q_{ij})$, a generator of a continuous-time Markov chain $\wdh \La(t)$
such
that as $|x|\to \infty$, \begin{equation}\label{con-infty}\begin{aligned}
&  \frac{b(x,i)}{|x|}=b(i) {x\over |x|}+ o(1),\\
& \frac{\sigma(x,i)}{|x|}=(\sigma_1(i)x,\sigma_2(i)x,\dots,\sigma_d(i)x)
{1\over |x|}+o(1),\\
% & Q(x)= \widehat Q +o(1), \\
% &
\end{aligned}\end{equation} where $o(1)\to 0$ as $|x|\to
\infty$.
% Moreover, $  \La(t)$ is an irreducible and %exponentially ergodic ergodic continuous-time Markov chain with stationary probability distribution  denoted by $\pi=(\pi_i, i \in \ss)$.
\end{Assumption}

\begin{Proposition}\label{linear} Suppose  Assumptions  \ref{I1},  \ref{qH}, \ref{qkappa},  \ref{StFP1},    \ref{assumption-finite-range},  \ref{Assumption-irreducibility}, and \ref{assumption-infty} hold. Assume that for each $i\in \ss$ and some  $p \in (0, 2)$  %sufficiently small, there exists a $\wdh c_{i} \in \R$
such that as $|x|\to \infty$, we have
\begin{equation}\label{eq-jumpcondition-infty}
\int_{U} \biggl( \frac{|x + c(x,i,u)|^{p}}{|x|^{p}} -1\biggr) \Pi(\d u) \le  \wdh c_{i} < \infty.
\end{equation} Denote  $\mu_{i}: = \lambda_{\max} ( \frac{b(i) + b(i)'}{2}  + \sum_{j=1}^{d} \sigma_{j}(i) \sigma_{j}(i)' ) + \wdh c_{i} $ for each $i\in \ss$. Suppose there exist $\al >0$ and $g_{i}> 0$, %\chao{Why we need $g_{i} \ge 1$? Is $g_{i} > 0$ enough?} \fubao{Yes, $g_{i} > 0$ is enough. I have changed here.}
$i \in \ss$  such that $g_{i} \to \infty$ as $i \to \infty$ and when $x$ is sufficiently large,
 % \fubao{I have added `when $x$ is sufficiently large,' here. Please check if it is ok.}
\begin{equation}
\label{eq-Q-drift-condition}
\sum_{j\in \ss} q_{ij}(x) g_{j} + p(\al + \mu_{i}) g_{i} \le 0    \, \,
\text{ for all } \, \, i \in \ss, %  \, \, \text{and for some} \, \, p\in (0,2),
\end{equation} where  $p\in (0,2)$ is as in \eqref{eq-jumpcondition-infty}. %and
% the  matrix $Q(x) = (q_{ij}(x))$ is the $Q$-matrix of $\La$.  %If  %$\sup_{i\in \ss} |\mu_{i}|  < \infty$ and
%\begin{equation}
%\label{eq-average-condition-ergodicity}
%\sum_{j\in \ss} \pi_{i} \mu_{i} = \sum_{j\in \ss} \pi_{i} \Biggl[ \lambda_{\max}\biggl( \frac{b(i) + b(i)'}{2}  + \sum_{j=1}^{d} \sigma_{j}(i) \sigma_{j}(i)'\biggr) + \wdh c_{i} \Biggr] < 0,
%\end{equation} then
Then $(X,\La)$ is $f$-exponential ergodic.
\end{Proposition}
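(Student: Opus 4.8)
The plan is to reduce the assertion to Theorem \ref{thm-exp-ergodicity} by exhibiting a suitable Foster--Lyapunov function. Since all of Assumptions \ref{I1}--\ref{Assumption-irreducibility} are already in force, it suffices to construct a nonnegative $V \in C^{2}(\R^{d}\times\ss)$ satisfying conditions (i) and (ii) of that theorem, namely $V(x,k)\to\infty$ as $|x|\vee k\to\infty$ and $\A V \le -\tilde\alpha V + \gamma$ for some $\tilde\alpha,\gamma>0$; the $f$-exponential ergodicity with $f=V+1$ then follows verbatim. Guided by the quantities $\mu_{i}$ and $g_{i}$ appearing in the hypotheses, I would take the $p$-th order test function
\[
V(x,i) := g_{i}\,(1+|x|^{2})^{p/2}, \qquad (x,i)\in\R^{d}\times\ss,
\]
which is $C^{\infty}$ in $x$ (the choice of $(1+|x|^{2})^{p/2}$ avoids the non-smoothness of $|x|^{p}$ at the origin), is bounded below by $g_{i}$, and hence already satisfies (i) because $g_{i}\to\infty$ and $(1+|x|^{2})^{p/2}\to\infty$.

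The heart of the argument is a term-by-term estimate of $\A V$ using the splitting $\A=\LL_{i}+Q(x)$ from \eqref{eq-operator}. For the switching part, writing $V_{0}(x):=(1+|x|^{2})^{p/2}$ and using $\sum_{j}q_{ij}(x)=0$, one gets exactly
\[
Q(x)V(x,i)=\sum_{j\in\ss}q_{ij}(x)\,[V(x,j)-V(x,i)]=V_{0}(x)\sum_{j\in\ss}q_{ij}(x)\,g_{j}.
\]
For the diffusion--jump part $\LL_{i}V(x,i)=g_{i}\LL_{i}V_{0}(x)$, the key computational lemma I would establish is
\[
\LL_{i}V_{0}(x)\le p\,\mu_{i}\,V_{0}(x)+o(|x|^{p})\qquad\text{as }|x|\to\infty .
\]
This is where Assumption \ref{assumption-infty} enters: substituting the linearizations of $b$ and $\sigma$ into the drift and second-order parts of $\LL_{i}$ in \eqref{eq-Lk-operator-defn}, the radial second-order term carries the favorable sign because $p<2$ and may be discarded, while the surviving drift and trace terms are controlled, after passing to the unit vector $x/|x|$, by the largest eigenvalue defining $\mu_{i}$; the jump integral is handled by \eqref{eq-jumpcondition-infty}, contributing $\wdh c_{i}$. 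Together these produce the constant $\mu_{i}$.

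Combining the two pieces and invoking the drift condition \eqref{eq-Q-drift-condition}, which gives $\sum_{j}q_{ij}(x)g_{j}\le -p(\alpha+\mu_{i})g_{i}$ once $|x|$ is large, yields
\[
\A V(x,i)\le V_{0}(x)\Bigl[\,p\mu_{i}g_{i}+\sum_{j}q_{ij}(x)g_{j}\Bigr]+g_{i}\,o(V_{0}(x))\le -p\alpha\,V(x,i)+g_{i}\,o(V_{0}(x)).
\]
Absorbing the $o(V_{0})$ term for $|x|$ large gives $\A V\le -\tfrac{p\alpha}{2}V$ outside a bounded region in $x$, and the boundedness of $\A V$ on the complementary compact set supplies the additive constant $\gamma$, which establishes (ii). An appeal to Theorem \ref{thm-exp-ergodicity} then completes the proof.

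The main obstacle, beyond the leading-order diffusion computation, is ensuring that these per-regime estimates assemble into a single global inequality of the form (ii) over the countably infinite space $\ss$: the linearization error $o(|x|^{p})$ in the lemma and the threshold in \eqref{eq-Q-drift-condition} are a priori $i$-dependent, so one must exploit the uniform linear-growth bound (with the common constant $H$) together with the finite-range Assumption \ref{assumption-finite-range}---which makes $\sum_{j}q_{ij}(x)g_{j}$ a finite sum over $|j-i|\le\kappa$---to keep the compact-region remainder $\gamma$ bounded uniformly in $i$. Verifying this uniformity is the delicate point of the argument.
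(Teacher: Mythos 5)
Your proposal is correct and follows essentially the same route as the paper's own proof: the paper likewise takes $V(x,i)=g_i|x|^p$ (modified near the origin to be $C^2$, where your choice $g_i(1+|x|^2)^{p/2}$ is just a cleaner explicit smoothing), computes $\A V$ asymptotically via Assumption \ref{assumption-infty}, discards the nonpositive $(p-2)$ radial term, bounds the drift/diffusion quadratic forms by $\lambda_{\max}$, controls the jump integral by \eqref{eq-jumpcondition-infty}, applies \eqref{eq-Q-drift-condition}, and absorbs the $o(1)$ error for $|x|\ge R$ before invoking Theorem \ref{thm-exp-ergodicity}. The uniformity-in-$i$ issue you flag as delicate is real but is likewise left implicit in the paper's proof, which simply chooses $R$ and $\gamma$ as if the $o(1)$ terms and compact-set bounds were uniform over $\ss$.
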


\begin{proof} Let $p \in (0, 2)$, $\al > 0$ and $g_{i}, i\in \ss$ be as in the statement of the proposition. Let the function $V(x,i) \in C^{2}(\R^{d}\times \ss)$
and $V(x,i)= g_{i} |x|^{p}$
when $(x,i) \in \bigl(\R^{d}\setminus\{y: |y|\le 1\}\bigr)\times \ss$.
% Lyapunov function \begin{displaymath} V(x,i) = \begin{cases}  |x|^{p}, & \text{ if } i\in H,     \\  f_i |x|\sp p, & \text{ if } i \notin H.  \end{cases}
%\end{displaymath}
  It is readily seen that  for each $i\in\ss$,
$V(\cdot,i)$ is continuous, nonnegative, and converges to $\infty$ as $|x| \vee i \to \infty$. Detailed calculations reveal that for $x\not=
0$, we have
\begin{align*}
& D (|x|^{p}) =  p |x|^{p-2}x,\\
& D^2  (|x|^{p}) = p \big[|x|^{p -2}I + (p-2)|x|^{p -4 }xx'\big].
\end{align*} Meanwhile, it follows from \eqref{con-infty}  that  when  $|x|\to \infty$
\begin{displaymath}
a(x,i)=\sigma(x,i)\sigma'(x,i) = \sum_{j=1}^d \sigma_j(i)xx'\sigma'_j(i) +
o(|x|^2).
\end{displaymath}
  Then for all $(x,i ) \in \R^{d}\times \ss$ with $|x|$ sufficiently large,  detailed computations using  Assumption \ref{assumption-infty} reveal that     \begin{align*}
  \A V(x,i)   &    = pg_{i}|x|^{p} \Biggl[ \frac{x' b(i) x}{|x|^{2}} + \frac{ \sum_{j=1}^{d} x' \sigma_{j}(i) \sigma_{j}(i)' x}{|x|^{2}} + (p-2)  \frac{(x'\sigma_{j}(i)' x)^{2}}{|x|^{4}}  \\
  & \qquad\qquad\  + \int_{U} \biggl( \frac{|x+ c(x,i,u)|^{p}}{|x|^{p}} -1 \biggr)\Pi(\d u) + \sum_{j\in \ss}  q_{ij}(x) \frac{g_{j}}{pg_{i}}   + o(1)  \Biggr].
\end{align*}
Notice that \begin{displaymath}
\frac{x' b(i) x}{|x|^{2}} + \frac{ \sum_{j=1}^{d} x' \sigma_{j}(i) \sigma_{j}(i)' x}{|x|^{2}}  \le \lambda_{\max} \biggl( \frac{b(i) + b(i)'}{2}  + \sum_{j=1}^{d} \sigma_{j}(i) \sigma_{j}(i)'\biggr).
\end{displaymath}
Also since $0 < p < 2$, we have $(p-2)  \frac{(x'\sigma_{j}(i)' x)^{2}}{|x|^{4}} \le 0$.   %   Observe
% \begin{displaymath}
% \sum_{j\in \ss} \wdh q_{ij} \frac{c_{j}}{1-pc_{i}} = \sum_{j\in \ss} \wdh q_{ij} c_{j}  + p \sum_{j\in\ss}\wdh q_{ij}\frac{c_{i}c_{j}}{1- pc_{i}} =  \sum_{j\in \ss} \wdh q_{ij} c_{j}   + O(p).
% \end{displaymath}
Therefore  for  $|x|$ sufficiently large, it follows from \eqref{eq-jumpcondition-infty}  and \eqref{eq-Q-drift-condition} % and \eqref{com-0}
that  \begin{align*}
\A V(x,i)  & \le pV(x,i) \biggl[ \lambda_{\max} \biggl( \frac{b(i) + b(i)'}{2}  + \sum_{j=1}^{d} \sigma_{j}(i) \sigma_{j}(i)'\biggr) + \wdh c_{i} + \sum_{j\in \ss}   q_{ij} (x)\frac{g_{j}}{pg_{i}}      + o(1)  \biggr] \\
   & =  pV(x,i) \biggl[ \mu_{i}+\sum_{j\in \ss}   q_{ij}(x) \frac{g_{j}}{pg_{i}}  +  o(1)  \biggl] \le pV(x,i) [- \al + o(1)].
\end{align*}
In particular, we can choose $R>0$ sufficiently large   so that
\begin{displaymath}
\A V(x,i) \le - \frac{\al}{2}  pV(x,i), \text{ for all } (x,i) \in \R^{d}\times \ss \text{ with }|x|\ge R.
\end{displaymath} Next we choose $\gamma > 0$ sufficiently large so that
\begin{displaymath}
\A V(x,i) \le  - \frac{\al}{2}  pV(x,i) + \gamma, \text{ for all } (x,i) \in \R^{d}\times \ss.
\end{displaymath}
This verifies condition (ii) of Theorem \ref{thm-exp-ergodicity}. Therefore the desired assertion on $f$-exponential ergodicity  follows.
\end{proof}

\section{Concluding Remarks}\label{sect:ConRemark}

Motivated by the increasing need of modeling complex systems, this paper is devoted to the investigation of a class of  regime-switching jump diffusions with countable regimes. By using an interlacing procedure together with an exponential killing technique, this paper was able to establish   the existence and uniqueness of a strong solution to the associated stochastic differential equations under more general formulation than those in the literature.  The paper next used coupling method and an appropriate  Radon-Nikodym derivative to  derive Feller and strong Feller properties and exponential ergodicity for such processes.

A number of other problems deserve further investigation. In particular, in view of Yamada and Watanabe's work on the uniqueness of solutions of stochastic differential equations (\cite{YamaW-71}),  one may naturally ask whether the Lipschitz condition
 %(equations \eqref{Lip-condition}, \eqref{eq-c0-nu-lip}, and \eqref{eq-q-Lip-new})
 can be relaxed. 
 Also of interest is to consider the problem of successful couplings 
 for regime-switching jump diffusions.
%\fubao{\blue Let us learn from Professor Yin to write some good remarks. I found that it is not easy. Let us try!}

\para{Acknowledgement}.\quad
The authors would like to express their appreciation to the referees for their careful reading of the manuscript and  helpful suggestions for improvements. % \fubao{I added `providing helpful' here. Is it right? I am not sure.}

%\begin{Remark}\label{linear-remark}{\rm Let us give some explanation on
%the differences between the conditions imposed in Proposition \ref{linear}
%and the ones imposed in ???. We have better abandon this remark.}\end{Remark}
%%%%%%%%%%%%%%%%%%%%%%%%%%%%%%%%%%%%%%%%%%%%%%%%%%%%%%%%%%%%%%%%%%%%%%%%%%%%
%%%%The author is greatly indebted to the referee, whose very careful
%%%%comments and helpful suggestions on the earlier version of the paper
%%%%and whose introduction of Ref. \cite{YinSZ} greatly improved the
%%%%quality of the paper.
%%%%%%%%%%%%%%%%%%%%%%%%%%%%%%%%%%%%%%%%%%%%%%%%%%%%%%%%%%%%%%%%%%%%%%%%%%%%

%\newpage
%\vskip 1.9 true cm

%%%%%%%%%%%%%%%%%%%%%%%%%%%%%
%%%%%%% References   %%%%%%%%
%%%%%%%%%%%%%%%%%%%%%%%%%%%%%

%%%%%%%%%%%%%%%%%%%%%%%%%%%%%%
\bibliographystyle{apalike}%{alpha}%{abbrv}%{plain}%

% \bibliography{/Users/zhu/GoogleDrive/refs}\end{document}

\end{document}